\newtheorem{theorem}{Theorem}[section]
\newtheorem{lem}[theorem]{Lemma}
\newtheorem{cor}[theorem]{Corollary}
\newtheorem{proposition}[theorem]{Proposition}
\theoremstyle{definition}
\newtheorem{definition}[theorem]{Definition}
\newtheorem{example}[theorem]{Example}
\newtheorem{hypo}[theorem]{Hypothesis}
\theoremstyle{remark}
\newtheorem{remark}[theorem]{Remark}
\numberwithin{equation}{section}
\begin{document}

\title[Information geometry and barycenter maps]{Information geometry of the space\\
of probability measures\\
and barycenter maps}
\footnote[0]{This article is published in Sugaku Expositions \textbf{34} (2021), 231--253,
originally appeared in Japanese in S\={u}gaku \textbf{69} 4 (2017), 387--406.}

\author{Mitsuhiro Itoh}
\address{University of Tsukuba,
Institute of Mathematics,
1-1-1 Ten-no-dai,
Tsukuba City, 305-8571, Japan}
\curraddr{}
\email{itohm@math.tsukuba.ac.jp}
\thanks{}

\author{Hiroyasu Satoh}
\address{Nippon Institute of Technology,
Liberal Arts and Sciences, 4-1 Gakuendai, Miyashiro-machi, Saitama Prefecture, 345-8501 Japan}
\curraddr{}
\email{hiroyasu@nit.ac.jp}
\thanks{}

\subjclass[2020]{Primary 46E27; Secondary 58B20, 53B21.}

\date{}
\dedicatory{}

\begin{abstract}
In this article, we present recent developments of information {geometry}, namely, geometry of the Fisher metric, dualistic structures and divergences on the space of probability measures,
particularly the theory of geodesics of the Fisher metric.
Moreover, we consider several facts concerning the barycenter of probability measures on the ideal boundary of a Hadamard manifold from a viewpoint of the information geometry.
\end{abstract}

\maketitle

\section{Introduction}\label{intro}

We present recent progress of information geometry, geometry with respect to the Fisher metric, the dual connection structure and divergences for a space of probability distributions. Especially we deal with geodesics with respect to the Fisher metric. Moreover, we discuss a barycenter map defined for probability measures which are defined on an ideal boundary of a Hadamard manifold.

Let $A_1, A_2, \cdots, A_n$ be points of the Euclidean space $\mathbb{E}$ and $\bm{w}=(w_1,\ldots, w_n)$ be an ordered $n$-tuple of non-negative numbers with $\sum_{i=1}^n w_i = 1$.
Then, we call a point $P\in\mathbb{E}$ satisfying $\overrightarrow{OP} = \sum_{i=1}^n w_i \overrightarrow{OA}_i$ the barycenter of $A_i$, $i=1, 2,\cdots, n$ with weight $\bm{w}$.
Here $O$ is the origin of $\mathbb{E}$.
We remark that the barycenter $P$ is independent of the choice of a reference point $O$.
Consider for example $n=3$ and $w_i = 1/3$, $i=1,2,3$.
We may set $O = A_1$. Then the barycenter $P$ of points $A_1, A_2, A_3$ satisfies $\overrightarrow{A_1P}=\frac{1}{3} (\overrightarrow{A_1 A_2} + \overrightarrow{A_1 A_3})$.
This means that $P$ coincides with the center of gravity of the triangle $\triangle A_1A_2A_3$. In fact, $\overrightarrow{A_1P}$ equals the vector, multiplied by $2/3$, of the geometric vector from $A_1$ to the midpoint of its opposite side $\frac{1}{2}\left(\overrightarrow{A_1A_2} + \overrightarrow{A_1 A_3}\right)$.
We can also define the barycenter by a critical point of the function  $f : \mathbb{E} \rightarrow \mathbb{R} ; Q \mapsto \sum_{i=1}^n w_i \vert A_i - Q\vert^2$.
In case of points being continuously distributed, for a non-negative function $w=w(x)$ satisfying $\int_\mathbb{E} w(x)\,dx = 1$,
we define the function $f : \mathbb{E} \rightarrow \mathbb{R}\, ; y \mapsto \int_{x\in {\Bbb E}} d(y, x)^2 w(x)\,dx$ and call  a critical point of $f$ a barycenter with weight $w= w(x)$, where $d(\cdot,\cdot)$ is the distance of $\mathbb{E}$.
Since the weight function $w= w(x)$ is regarded as  a density of substance distributed on $\mathbb{E}$ of unit total mass, or density function of  a probability distribution,
a barycenter can be defined for a probability measure $w(x)\,dx$. The notion of barycenter contributes to the conjugacy theorem of maximal compact subgroups of a semi-simple Lie group, which is one of theorems necessary for the theory of symmetric spaces, and is a consequence of the Cartan fixed point theorem in which the barycenter is utilized(\cite{Hel-1, Eb'96}).
The convexity of distance function on a Riemannian manifold of non-positive curvature plays an important role in investigation of barycenter.
In this article, we consider barycenters with respect to a convex function, namely the Busemann function, in place of the distance function.

On considering barycenters, we need to study a space of probability measures.
Let $(M, g)$ be a Riemannian manifold and $\mathcal{P}^+(M)$ be the space consisting of all probability measures having positive density function.
We can define a Riemannian metric, called the Fisher metric on $\mathcal{P}^+(M)$.
The Fisher metric provides a positive definite inner product to each tangent space $T_{\mu}\mathcal{P}^+(M)$.
Riemannian geometry of the Fisher metric on a space of probability measures has been established by T. Friedrich\cite{Fr}.
In \cite{Fr}, explicit formulae of the Levi-Civita connection and geodesics of the Fisher metric are obtained in terms of density functions.
From the representation form of geodesics, we find that all geodesics of the Fisher metric are periodic, whereas $\mathcal{P}^+(M)$ is not geodesically complete.
Moreover, it is shown that the Fisher metric is  a metric of constant sectional curvature $1/4$.
We develop the information geometry of the Fisher metric based on T. Friedrich's work  and  apply it to the geometry of barycenter maps.
We can argue the theory of geodesics, one of the basic subjects and tools of Riemannian geometry, on the space $(\mathcal{P}^+(M), G)$.
We define in this article the geometric mean of two probability measures.
This notion gives a good understanding about shortest geodesics, the exponential map, the distance function and thus it enables us to develop the theory of geodesics elaborately.

The Fisher metric is a natural generalization of the Fisher information matrix in mathematical statistics and information theory.
As is commonly known, a family of connections $\{\nabla^{(\alpha)}\}_{\alpha\in\mathbb{R}}$ in which a pair $(\nabla^{(\alpha)}, \nabla^{(-\alpha)})$ is dual with respect to the Fisher metric plays a significantly important role in the geometry of statistical manifolds. Here $\nabla^{(\alpha)}$ is called the $\alpha$-connection (see \cite{AN}).

We can formulate the information geometry of $\alpha$-connections on a statistical manifolds more widely on the space $\mathcal{P}^+(M)$, in particular we can develop on the space $\mathcal{P}^+(M)$ the information geometry of the specific $\alpha$-connections, namely, $(+1)$-connection $\nabla^{(+1)}$ and $(-1)$-connection $\nabla^{(-1)}$, which are flat and dual each other (see Corollary \ref{flatdual}, \S \ref{alpha} and \cite{I}).
With respect to this fact, a family of straight lines $t \mapsto \mu + t \tau$ permits an interpretation of a family of geodesics of $m$-connection, namely, the $(-1)$-connection, which corresponds to an affine coordinate system in a statistical manifold.
By this affine parametrization, we observe that the Kullback-Leibler divergence, one of the important entropies in information geometry, provides a potential by which the Fisher metric is considered as a Hesse metric.

On the other hand, Besson et al. (\cite{BCG}) showed  Mostow's rigidity theorems for compact manifolds of negative curvature by using the geometric quantity called the volume entropy 
 and by applying the notion of barycenter which is initiated by Douady-Earle (\cite{DE}) on the hyperbolic plane. In their study Besson et al. considered probability measures, including the Poisson kernel measure, which are defined on the ideal boundary $\partial X \cong S^{n-1}$ of a simply connected negatively curved manifold $X^n$. They formulate for those measures the barycenter whose test function is the Busemann function. 
They deal with probability measures without atom, not restricting themselves to probability measures of positive density function.
Here a probability measure $\mu$ has no atom, when for any Borel set $A$ with $\mu(A) > 0$ there exists a Borel set $B \subset A$ satisfying $0<\mu(B) < \mu(A)$. Their consideration is restrictive within rank one symmetric spaces of non-compact type.
In this article, we develop, however, geometry of barycenter on a Hadamard manifold $X$ more generic than symmetric spaces of non-compact type.
Let $\partial X$ be the ideal boundary of $X$ which is defined as a quotient of all geodesic rays by asymptotic equivalence relation and $B_{\theta}$ be the normalized Busemann function.
For a probability measure $\mu$ on $\partial X$, we consider the averaged  Busemann function with weight $\mu$,
$$
\mathbb{B}_{\mu}: X \rightarrow \mathbb{R} ;\ y \mapsto \int_{\theta\in\partial X} B_{\theta}(y)\,d\mu(\theta).
$$
We call a critical point $x\in X$ of $\mathbb{B}_{\mu}$ the barycenter of $\mu$.
Under the guarantee of existence and uniqueness of barycenter, we can define a map ${\rm bar} : \mathcal{P}^+(\partial X) \rightarrow X$, by assigning to $\mu$ a point $x\in X$ which is a barycenter of $\mu$
and we call ${\rm bar}$ the barycenter map (see also \cite[p.554]{E}).
We discuss the existence and uniqueness of barycenter for arbitrary $\mu$ in \S \ref{barycenter}. One of important properties of the barycenter map is the following:
an isometry $\varphi$ of $X$ induces naturally a homeomorphism ${\hat \varphi} : \partial X \rightarrow \partial X$ which satisfies $\varphi\circ {\rm bar}={\rm bar}\circ {\hat \varphi}$.

Now we will explain the motivation and background of our investigations of barycenters.
An $n$-dimensional Hadamard manifold $(X, g)$ is diffeomorphic to ${\Bbb R}^n$ and hence to an open ball $D^n$ with boundary $S^{n-1}$. A Hadamard manifold 
$X$ admits also the ideal boundary $\partial X$.
Then, we are able to consider the Dirichlet problem at infinity: given an $f \in C^0(\partial X$),
find a solution $u = u(x)$ on $X$ to $\Delta u\vert_X=0, u\vert_{\partial X}=f$,
where $\Delta$ is the Laplace-Beltrami operator.
This is a geometric extension of the classical Dirichlet problem on a given bounded region with boundary in ${\Bbb R}^n$ to a Hadamard manifold with ideal boundary.
Now, we suppose that a solution $u$ of the Dirichlet problem at infinity with boundary condition is described in terms of the integral kernel $P(x,\theta)$, called Poisson kernel, as $u(x)=\int_{\theta\in\partial X}P(x,\theta) f(\theta)\,d\theta$.
Then, the fundamental solution $P(x,\theta)$ together with the measure $d\theta$ gives a probability measure on $\partial X$ with positive density function (see \S \ref{barycentermap} and refer to \cite{SY,AS,E} for precise definition of Poisson kernel).
When the existence of the Poisson kernel is guaranteed, we can define a map $\Theta:X\to\mathcal{P}^+(\partial X);\,\,x\mapsto P(x,\theta)\,d\theta$, which we call the Poisson kernel map.
As mentioned above, $\mathcal{P}^+(\partial X)$ carries the Fisher metric and hence, the map $\Theta$ is regarded as an embedding from a Hadamard manifold $(X, g)$ into an infinite dimensional Riemannian manifold $(\mathcal{P}^+(\partial X), G)$ of constant curvature.
Then, we obtain 

\begin{theorem}[\cite{ISh, IS1, IS2}]\label{poissonmap}\ 
\begin{enumerate}
\item Let $(X^n,g)$ be an $n$-dimensional Damek-Ricci space.
Then, $(X^n,g)$ carries a Poisson kernel.
Moreover, its Poisson kernel map $\Theta$ is a  homothety, i.e., satisfies $\Theta^{\ast}G= C\, g$, $C = Q/n$ and is harmonic (i.e., minimal).
Here $Q>0$ denotes the volume entropy of $X$ which means the exponential volume growth of $X$.
\item Conversely, if a Hadamard manifold $(X^n,g)$ admits a Poisson kernel and its Poisson kernel map $\Theta$ is homothetic and harmonic, then the Poisson kernel of $X$ is expressed in the form
\begin{equation}
P(x,\theta)=\exp(-Q\,B_{\theta}(x)),
\end{equation}
where $B_{\theta}(x)$ is the normalized Busemann function.
\end{enumerate}
\end{theorem}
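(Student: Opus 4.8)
The plan is to read Theorem~\ref{poissonmap} through the ``sphere picture'' of the Fisher geometry. By Friedrich \cite{Fr}, the assignment $\mu \mapsto 2\sqrt{d\mu/d\theta}$ identifies $(\mathcal{P}^+(\partial X), G)$ with an open subset of the sphere of radius $2$ in $L^2(\partial X, d\theta)$, which is why $G$ has constant curvature $1/4$; and under this identification a homothetic immersion $F\colon X \to \mathcal{P}^+(\partial X)$ is harmonic (equivalently, minimal) exactly when the $L^2$-valued map $x \mapsto 2\sqrt{dF(x)/d\theta}$ is an eigenmap of $\Delta^X$, i.e.\ when $\sqrt{dF(x)/d\theta}$ is, for each $\theta$, a Laplace eigenfunction on $X$ with one and the same eigenvalue (Takahashi's criterion, in homothetic form). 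I would use this dictionary in both directions.

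For part (i) I would first bring in the known structure of a Damek--Ricci space $X$: it is a harmonic Hadamard manifold, its Dirichlet problem at infinity is solvable, and its Poisson kernel in normalized form is $P(x,\theta) = \exp(-Q\,B_{\theta}(x))$, with $B_{\theta}$ the normalized Busemann function and $Q$ the volume entropy; in particular $|\nabla^X B_{\theta}| \equiv 1$ and, by (asymptotic) harmonicity, $\Delta^X B_{\theta} \equiv Q$. Harmonicity of $\Theta$ is then a one-line verification of Takahashi's criterion,
$$\Delta^X \exp(-\tfrac{Q}{2}B_{\theta}) = \exp(-\tfrac{Q}{2}B_{\theta})\bigl(\tfrac{Q^2}{4}|\nabla^X B_{\theta}|^2 - \tfrac{Q}{2}\Delta^X B_{\theta}\bigr) = -\tfrac{Q^2}{4}\exp(-\tfrac{Q}{2}B_{\theta}),$$
so $\sqrt{P(\cdot,\theta)}$ is an eigenfunction of $\Delta^X$ with an eigenvalue independent of $\theta$, whence $\Theta$ is minimal. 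For the homothety, differentiating $\log P(x,\theta) = -Q\,B_{\theta}(x) + \mathrm{const}$ and using the definition of the Fisher metric gives
$$\Theta^{\ast}G(u,v) = Q^2 \int_{\partial X} dB_{\theta}(u)\,dB_{\theta}(v)\,P(x,\theta)\,d\theta, \qquad u, v \in T_xX,$$
so it remains to establish the ``isotropy of the second moment'': that for each $x$ the bilinear form $(u,v)\mapsto \int_{\partial X} dB_{\theta}(u)\,dB_{\theta}(v)\,P(x,\theta)\,d\theta$ is a multiple of $g_x$ --- its $g$-trace being $\int_{\partial X}|\nabla^X B_{\theta}|^2\,P(x,\theta)\,d\theta = 1$, which then pins down the constant $C$ of the statement. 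For the rank-one symmetric spaces this identity is forced by the irreducibility of the isotropy representation, but a general non-symmetric Damek--Ricci space has too small an isometry group for that shortcut; there one must descend to the $H$-type solvable model and verify the identity directly from the explicit Busemann functions and the explicit Poisson kernel. I expect this to be the main obstacle in part (i).

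For part (ii) I would run the dictionary backwards. Assume $X$ carries a Poisson kernel and that $\Theta$ is homothetic and harmonic; then $\Theta(X) \subset \mathcal{P}^+(\partial X)$ is a minimal submanifold and $\Theta$ a homothety onto it, so Takahashi's criterion supplies a constant $\lambda > 0$ with $\Delta^X \sqrt{P(\cdot,\theta)} = -\lambda\,\sqrt{P(\cdot,\theta)}$ for every $\theta$. On the other hand a Poisson kernel is $\Delta^X$-harmonic in its first variable, so writing $P = \xi^2$ one has $\xi\,\Delta^X\xi = -|\nabla^X\xi|^2$; combining this with $\Delta^X\xi = -\lambda\xi$ yields $|\nabla^X\xi|^2 = \lambda\,\xi^2$, that is
$$|\nabla^X \log P(\cdot,\theta)|^2 \equiv 4\lambda =: Q^2 \quad\text{on } X,$$
a positive constant, independent of $x$ and of $\theta$. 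Set $B_{\theta} := -\tfrac1Q\log P(\cdot,\theta)$, adjusted by a constant so that $B_{\theta}(o) = 0$ at a base point $o$. Then $|\nabla^X B_{\theta}| \equiv 1$, hence $\nabla_{\nabla^X B_{\theta}}\nabla^X B_{\theta} = \tfrac12\nabla^X|\nabla^X B_{\theta}|^2 = 0$, so the integral curves of $-\nabla^X B_{\theta}$ are unit-speed geodesics along which $B_{\theta}$ decreases at unit rate. On a Hadamard manifold such a function is, up to an additive constant, the Busemann function of a single ideal point, and the boundary behaviour of the Poisson kernel ($P(x,\theta)\to\infty$ as $x\to\theta$, and $P(x,\theta')\to 0$ as $x\to\theta'\neq\theta$) forces that ideal point to be $\theta$; hence $B_{\theta}$ is the normalized Busemann function of $\theta$ and $P(x,\theta)=\exp(-Q\,B_{\theta}(x))$, as asserted. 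The delicate step is precisely this last identification --- matching the gradient flow of $\log P(\cdot,\theta)$ with the geodesic rays asymptotic to $\theta$, and invoking the unit-gradient characterization of Busemann functions on a Hadamard manifold --- which I expect to be the main obstacle in part (ii).
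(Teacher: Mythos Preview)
The paper itself does not prove Theorem~\ref{poissonmap}; it is stated in the introduction with references to \cite{ISh, IS1, IS2}, and the body of the survey develops the surrounding Fisher-metric and barycenter machinery rather than the theorem itself. So there is no in-paper proof to compare against. That said, your proposal follows essentially the line of the cited papers: the sphere embedding $\rho$ of \eqref{27}, Takahashi's criterion for minimality, and (for the converse) the eikonal identity $|\nabla\log P(\cdot,\theta)|\equiv\text{const}$ obtained by combining $\Delta P=0$ with $\Delta\sqrt{P}=-\lambda\sqrt{P}$. Your identification of the two genuine obstacles---the ``second-moment isotropy'' on a non-symmetric Damek--Ricci space, and the passage from ``unit-gradient function with the right boundary behaviour'' to ``normalized Busemann function''---is accurate; in \cite{IS1} the first is handled by direct computation in the $H$-type solvable model, and in \cite{IS2} the second is handled much as you sketch.

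Two small points. First, your trace computation in part~(i) actually gives $\Theta^{\ast}G=(Q^{2}/n)\,g$, not $Q/n$ as printed in the statement; your computation is the correct one (the survey's constant appears to be a misprint of the result in \cite{ISh,IS2}). Second, in part~(ii) you define $Q$ by $Q^{2}:=4\lambda$ and then write the conclusion $P=\exp(-Q\,B_{\theta})$, but the theorem asserts that this $Q$ is the \emph{volume entropy} of $X$. That identification is not automatic from your argument: once you have $P=\exp(-Q\,B_{\theta})$ with $B_{\theta}$ the normalized Busemann function, harmonicity $\Delta P=0$ forces $\Delta B_{\theta}\equiv Q$, so $X$ is asymptotically harmonic with common horosphere mean curvature $-Q$, and then one invokes the result (recorded in the survey just after Theorem~\ref{poissonmap}, citing \cite{ISS}) that this constant coincides with the volume entropy. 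You should add that closing step.
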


In this article, we call the Poisson kernel represented in Theorem \ref{poissonmap} (ii), the Busemann--Poisson kernel. See \S \ref{barycentermap} for details.
A Hadamard manifold which carries the Busemann--Poisson kernel satisfies visibility axiom and is asymptotically harmonic. We say that $X$ satisfies the visibility axiom, if there exists a geodesic in $X$ joining arbitrary distinct two ideal points (see \S \ref{barycenter}). Moreover, we say that $X$ is asymptotically harmonic, if all horospheres, level hypersurfaces of the Busemann function $B_{\theta}$ for any $\theta\in\partial(X)$ have constant mean curvature $c$ (\cite{Le}).
We remark that if $X$ is asymptotically harmonic and all horospheres have constant mean curvature $c$, then $c$ coincides $-Q$ (\cite{ISS}).
When $X$ carries the Busemann--Poisson kernel, the barycenter of the probability measure $P(x,\theta)\,d \theta =\exp(-Q\,B_{\theta}(x))\,d \theta$ is just the point $x$.
When we regard the barycenter map ${\rm bar} : \mathcal{P}^+(\partial X) \rightarrow X$ as the projection of a fiber space structure, the Poisson kernel map $\Theta : X \rightarrow \mathcal{P}^+(\partial X)$ provides a section of the projection ${\rm bar}$.

In connection with the Busemann-Poisson kernel we make a brief introduction of Damek-Ricci spaces.
A Damek-Ricci space is a solvable Lie group carrying a left-invariant metric. The family of all Damek-Ricci spaces is a class of harmonic Hadamard manifolds including rank one symmetric spaces, i.e., the complex, quaternionic hyperbolic $n$-spaces 
$\mathbb{C}{\bf H}^n$, $\mathbb{H}{\bf H}^n$, $n \geq 1$, and the octonionic hyperbolic plane $\mathbb{O}{\bf H}^2$. 
The real hyperbolic spaces are regarded as special ones among Damek-Ricci spaces.
Refer to \cite{BTV,ADY,Kashi} for Damek-Ricci spaces.
Here we call a Riemannian manifold harmonic, if the volume density function or mean curvature of any geodesic spheres depends only on radius and independent of the direction from the center (\cite{Sz}).
Since a horosphere is certain limit of geodesic spheres, harmonic manifolds are also asymptotically harmonic.
We notice that Damek-Ricci spaces carry the Busemann--Poisson kernel (\cite{ADY, IS2}).
Since a Hadamard $(X,g)$ carrying the Busemann--Poisson kernel is asymptotically harmonic, one has the problem whether such a manifold is harmonic or not and, moreover, one can raise the problem whether it is isometric or homothetic to a Damek-Ricci space.
We take one further step and suppose that such a manifold $(X,g)$ is quasi-isometric to a Damek-Ricci space $S$.
Then, under this assumption, we can raise a problem whether there exists an isometry or a homothety between $(X,g)$ and the space $S$.
We see that from this assumption an isometry of $S$ induces a quasi-isometry of $(X,g)$ and hence a homeomorphism of the ideal boundary $\partial X$ of $X$, since any Damek-Ricci space is Gromov hyperbolic.
With these backgrounds, we proceed to our argument of barycenter of probability measures on the ideal boundary.

The article is organized as follows.
In \S \ref{2}, we treat differential geometry of the Fisher metric $G$ , i.e., the Levi-Civita connection of the metric $G$ on the space of probability measures.
In \S \ref{alpha}, we introduce an outline of the theory of $\alpha$-connections which admits duality with respect to the Fisher metric.
In \S \ref{4}, we recall the basic properties and the important results of the ideal boundary of a Hadamard manifold and the normalized Busemann function that are needed for  discussing barycenters in the later sections.
We define the barycenter in \S \ref{barycenter} and consider its existence and uniqueness, and then its geometric properties.
In the last section, we develop our argument of the fiber space structure of the barycenter map and isometricity of a barycentrically associated map that is a transformation of $X$ induced by a homeomorphism of $\partial X$ via the barycenter map.

\section{The space of probability measures and the Fisher metric}\label{2}

\subsection{The space of probability measures}\label{2.1}

Let $M$ be a compact, connected  $C^{\infty}$-manifold and $\mathcal{B}(M)$ be the collection of all Borel sets on $M$.
$\mathcal{B}(M)$ is the smallest $\sigma$-algebra which contains all open subsets of $M$.

A probability measure on the measurable space $(M,\mathcal{B}(M))$, or simply on $M$, is a real valued function $P:\mathcal{B}(M)\to\mathbb{R}$ satisfying the following:
\begin{enumerate}
\item $P(A)\geq 0$ holds for any $A\in\mathcal{B}(M)$. 
In particular, $P(M)=1$ and $P(\emptyset)=0$.
\item For any countable sequence of sets 
$\{E_j\,\vert\,j=1,2,\cdots\}$ of $\mathcal{B}(M)$   satisfying $E_j\cap E_k=\emptyset,\ j\not=k$ 
\begin{equation}\label{21}
P\left(\mathop{\bigcup}_{j=1}^{\infty}E_j\right)=\sum_{j=1}^{\infty}P(E_j).
\end{equation}
\end{enumerate}

Let  $\lambda$ be a volume form on $M$.
We normalize $\lambda$ such that $\int_{x\in M}d\lambda(x)=1$ and fix it as the standard probability measure on $M$.

Let $\mathcal{P}(M)$ be the space of all probability measures on $M$. Obviously
$\lambda\in\mathcal{P}(M)$.
Any probability measure $\mu$ which we consider in this article is supposed to be absolutely continuous with respect to $\lambda$, denoted as $\mu \ll \lambda$.
Here, a probability measure $\mu\in \mathcal{P}(M)$ is said to be absolutely continuous with respect to $\mu_1\in \mathcal{P}(M)$, provided $\mu(A)=0$ holds for any $A\in\mathcal{B}(M)$ satisfying $\mu_1(A) = 0$.
A measure $\mu$ is absolutely continuous with respect to a finite measure $\mu_1$ if and only if there exists a function $f\in L^1(M,\mu_1)$ such that $\mu=f \mu_1$, i.e.,
\begin{equation*}
\mu(A)=\int_{x\in A}f(x) d\mu_1(x)\quad(\forall\,A\in\mathcal{B}(M)).
\end{equation*}
We call such a function $f\in L^1(M,\mu_1)$ the Radon-Nikodym derivative of $\mu$ with respect to $\mu_1$, denoted by $d\mu/d\mu_1$. Probability measures which we consider in this article are measures whose Radon-Nikodym derivative with respect to $\lambda$ is everywhere positive and continuous.
We denote the space of all such probability measures by $\mathcal{P}^+(M)$ and call it the space of probability measures:
\begin{equation}\label{24}
\mathcal{P}^+(M)=\left\{\mu\in \mathcal{P}(M)\,\left\vert\,\mu \ll \lambda,\,\,\,\frac{d\mu}{d\lambda}\in C^0(M),\,\,\,
\frac{d\mu}{d\lambda}(x)>0\,\,(\forall\, x\in M)\right.\right\}.
\end{equation}
Then, a natural embedding is defined from $\mathcal{P}^+(M)$ into the space of $L^2$-functions $L^2(M,\lambda)$:
\begin{equation}\label{27}
\rho = \rho^{(1/2)}\ :\, 
\mathcal{P}^+(M)  \hookrightarrow  L^2(M, \lambda);
\mu = f\,\lambda  \mapsto  \displaystyle{2\, \sqrt{\frac{d \mu}{d \lambda}} = 2\ \sqrt{f}}.
\end{equation}
We define a topology of $\mathcal{P}^+(M)$ by this embedding.
We remark that there exists 
a sequence $\{\mu_i\}$ of probability measures of $\mathcal{P}^+(M)$ which has not necessarily a limit in $\mathcal{P}^+(M)$, even if the sequence $\{\sqrt{d\mu_i/d\lambda}\}$ is convergent in $L^2(M,\lambda)$, 
We notice that geometry of infinite dimensional space of probability measures is discussed in \cite{PS}.

Each probability measure in $\mathcal{P}^+(M)$ is regarded as a point in a space.
A path joining $\mu = f\,\lambda, \mu_1= f_1\,\lambda\in\mathcal{P}^+(M)$ given by 
$\mu(t)=(1-t)\mu+t\mu_1 = \left((1-t) f + t f_1\right) \lambda$ is inside $\mathcal{P}^+(M)$ for $0 \leq t \leq 1$.
Differentiating this curve $\mu(t)$ with respect to $t$,  we have
\begin{equation*}\label{30}
\dot{\mu}(t)=\frac{d}{dt}((1-t)\mu+t\mu_1)=\mu_1-\mu = \left(f_1 - f \right) \lambda
\end{equation*}
and then
$$
\int_M d\dot{\mu}(t)=0.
$$
From this consideration, we define a tangent space $T_{\mu}\mathcal{P}^+(M)$ at $\mu\in\mathcal{P}^+(M)$ as follows:
\begin{equation}\label{32}
T_{\mu}\mathcal{P}^+(M) := \left\{ \tau = h\,\lambda\, |\, \, h \in C^0(M)\cap L^2(M,\lambda),\, \int_{x\in M} h(x)\,d\lambda(x) = 0 \right\}.
\end{equation}

\begin{remark}
A path joining two points is a 1-simplex.
In general, an $n$-simplex $\Delta^n$ whose vertices of number $n+1$ are probability measures of $\mathcal{P}^+(M)$ is a proper subset of $\mathcal{P}^+(M)$.
\end{remark}

\begin{remark}
The right hand side of \eqref{32} is an infinite dimensional vector space whose definition is independent of the choice of $\mu$.
We denote therefore this space by $\mathcal{V}$.
Let $\mu\in\mathcal{P}^+(M)$. Then the sum $\mu+\tau$  of $\mu$ with $\tau\in\mathcal{V}$ belongs to $\mathcal{P}^+(M)$, if  the $C^0$-norm $\sup_{x\in M} \left|(d\tau/d\mu)(x)\right|$ of $\tau$ with respect to $\mu$ is small enough.
This fact suggests that $\mathcal{P}^+(M)$ is like an open subset in an affine space whose associated vector space is $\mathcal{V}$.
\end{remark}

We define a curve $c:(a,b)\to\mathcal{P}^+(M)$ in $\mathcal{P}^+(M)$ by
\begin{equation*}\label{33}
c(t)=f(x,t)\,\lambda\qquad(a < t < b).
\end{equation*}
We assume that the density function $f(x,t)$ parameterized in $t$ is of $C^2$-class with respect to $t$ for each $x\in M$.
The velocity vector of the curve $c(t)$ is given by
\begin{equation*}\label{34}
\frac{dc}{dt}(t)=\frac{\partial f}{\partial t}(x,t)\,\lambda\, \in\, T_{c(t)}\mathcal{P}^+(M),\ t\in(a,b).
\end{equation*}

\subsection{Fisher Metric}\label{2.2}

We give definition of the Fisher metric and state its properties.
\begin{definition}[\cite{Fr, I, IS2, IS5, IS6, ISh, Sh}]
Let $\mu=f(x)\,\lambda\in\mathcal{P}^+(M)$.
We define a positive definite inner product $G_\mu$ on $T_{\mu}\mathcal{P}^+(M)$ by
\begin{equation}\label{35}
 G_{\mu}(\tau,\tau_1) = \int_{x\in M}
\frac{d\tau}{d\mu}(x)\,\frac{d\tau_1}{d\mu}(x)\,d\mu(x)
=\int_{x\in M}
\frac{h(x)}{f(x)}\frac{h_1(x)}{f(x)}f(x)\,d\lambda(x)
\end{equation}
for $\tau= h(x)\,\lambda,\ \tau_1 = h_1(x)\,\lambda\in T_{\mu}\mathcal{P}^+(M)$.
We call a map $\mu\mapsto G_\mu$ the Fisher metric on $\mathcal{P}^+(M)$.
We denote $G_{\mu}$-norm of $\tau$ by $\vert \tau \vert_{\mu}:= \sqrt{G_{\mu}(\tau,\tau)}$.
\end{definition}

See \cite{PS} also for the Fisher metric defined on an infinite dimensional space of probability measures. 

Now, we give a push-forward measure which is basic in measure theory and also in probability theory.
Let $\Phi$ be a homeomorphism of $M$.
A push-forward of measures by $\Phi$ is a map $\Phi_{\sharp} : \mathcal{P}^+(M) \to \mathcal{P}^+(M)$ which assigns to any $\mu\in \mathcal{P}^+(M)$ a measure $\Phi_{\sharp}\mu\in \mathcal{P}^+(M)$. Here $\Phi_{\sharp}\mu$ is defined as follows: for any $A\in\mathcal{B}(M)$, $\displaystyle{\Phi_{\sharp}(\mu)(A) := \mu(\Phi^{-1}(A))}$, i.e., $\Phi_{\sharp}\mu$ is a probability measure which satisfies
\begin{equation}\label{41}
\int_{x\in M}h(x)\,d(\Phi_{\sharp}\mu)(x)
=\int_{x'\in\Phi^{-1}(M)}h(\Phi(x'))\,d\mu(x')
\end{equation}for any measurable function $h:M\to\mathbb{R}$. When $\Phi : M\to M$ is a diffeomorphism of $M$,
we find $\Phi_{\sharp}\ \mu=(\Phi^{-1})^{\ast} \mu$, i.e., the push-forward $\Phi_{\sharp}$ is the pullback of measures by the inverse diffeomorphism $\Phi^{-1}$.
Notice that the differential map of the push-forward $\Phi_{\sharp}$ is given by $(d\Phi_{\sharp})_{\mu}(\tau)=\Phi_{\sharp}(\tau)$.

\begin{theorem}[\cite{Fr}]\label{pushisometry}
The push-forward by a homeomorphism $\Phi\,:\,  M \rightarrow M$ acts isometrically on $(\mathcal{P}^+(M), G)$,
i.e., for any $\tau,\tau_1\in T_{\mu}\mathcal{P}^+(M)$ and $\mu\in\mathcal{P}^+(M)$

\begin{equation}\label{pushforwardisometry}
G_{\Phi_{\sharp}\mu}\left((d\Phi_{\sharp})_{\mu}(\tau),\,(d\Phi_{\sharp})_{\mu}(\tau_1)\right)
=G_{\mu}(\tau,\,\tau_1). 
\end{equation}
\end{theorem}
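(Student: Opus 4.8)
The plan is to exploit the fact that the Fisher metric \eqref{35} is assembled solely from Radon-Nikodym derivatives and from integration against the measure $\mu$ itself, and that both of these operations behave covariantly under push-forward. Fix $\mu = f\,\lambda\in\mathcal{P}^+(M)$ and $\tau = h\,\lambda,\ \tau_1 = h_1\,\lambda\in T_\mu\mathcal{P}^+(M)$, and set $\nu := \Phi_\sharp\mu$, $\sigma := \Phi_\sharp\tau$, $\sigma_1 := \Phi_\sharp\tau_1$. By the remark preceding the theorem, $(d\Phi_\sharp)_\mu(\tau) = \sigma$ and $(d\Phi_\sharp)_\mu(\tau_1) = \sigma_1$, so the left-hand side of \eqref{pushforwardisometry} is $G_\nu(\sigma,\sigma_1)$, and the task is to identify this with $G_\mu(\tau,\tau_1)$.

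The first step is to record the transformation rule for Radon-Nikodym derivatives under push-forward, namely
\[
\frac{d\sigma}{d\nu} = \frac{d\tau}{d\mu}\circ\Phi^{-1}
\qquad\text{and}\qquad
\frac{d\sigma_1}{d\nu} = \frac{d\tau_1}{d\mu}\circ\Phi^{-1}.
\]
To see the first identity, for $A\in\mathcal{B}(M)$ write $\sigma(A) = \tau(\Phi^{-1}(A)) = \int_{\Phi^{-1}(A)}(d\tau/d\mu)\,d\mu$, and apply the defining relation \eqref{41} to the measurable function given by the product of the indicator of $A$ with $(d\tau/d\mu)\circ\Phi^{-1}$; this rewrites $\sigma(A)$ as $\int_A \big((d\tau/d\mu)\circ\Phi^{-1}\big)\,d\nu$, and since $A$ is arbitrary the claim follows (and likewise for $\tau_1$). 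At this stage one also checks the two points where the hypothesis on $\Phi$ is genuinely used: that $\nu\in\mathcal{P}^+(M)$, its density with respect to $\lambda$ being continuous and positive because $\Phi$ is a homeomorphism, and that $\sigma(M) = \tau(\Phi^{-1}(M)) = \tau(M) = 0$, so that $\sigma\in T_\nu\mathcal{P}^+(M)$. This measure-theoretic bookkeeping is the only part of the argument that calls for real care.

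With the transformation rule in hand, the second step is a direct substitution into the definition \eqref{35}, followed by one further change of variables:
\[
G_\nu(\sigma,\sigma_1)
= \int_{x\in M}\frac{d\sigma}{d\nu}\,\frac{d\sigma_1}{d\nu}\,d\nu
= \int_{x\in M}\Big(\tfrac{d\tau}{d\mu}\circ\Phi^{-1}\Big)\Big(\tfrac{d\tau_1}{d\mu}\circ\Phi^{-1}\Big)\,d\nu
= \int_{x\in M}\frac{d\tau}{d\mu}\,\frac{d\tau_1}{d\mu}\,d\mu
= G_\mu(\tau,\tau_1),
\]
where the third equality is \eqref{41} applied to $h := \big((d\tau/d\mu)(d\tau_1/d\mu)\big)\circ\Phi^{-1}$, for which $h\circ\Phi = (d\tau/d\mu)(d\tau_1/d\mu)$. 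This is exactly \eqref{pushforwardisometry}, so no further work is needed; as indicated, the only step beyond routine manipulation is the verification that push-forward by a homeomorphism preserves $\mathcal{P}^+(M)$ together with its tangent spaces.
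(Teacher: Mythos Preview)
The paper does not supply its own proof of this theorem; it is stated with a citation to Friedrich~\cite{Fr}. Your computation is the standard one and is correct: the transformation rule $d(\Phi_\sharp\tau)/d(\Phi_\sharp\mu) = (d\tau/d\mu)\circ\Phi^{-1}$ followed by the change-of-variables identity \eqref{41} yields \eqref{pushforwardisometry} directly, and this is exactly how one expects the argument to go.

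One caveat: your parenthetical claim that $\nu = \Phi_\sharp\mu \in \mathcal{P}^+(M)$, with density ``continuous and positive because $\Phi$ is a homeomorphism,'' is not justified as written. A general homeomorphism need not carry measures with continuous positive density with respect to the fixed volume form $\lambda$ to measures of the same type; there exist homeomorphisms of $S^1$ sending Lebesgue measure to a singular measure, and since the density $f$ of $\mu$ is bounded below on the compact $M$, the same pathology would propagate to $\Phi_\sharp\mu$. The paper itself simply asserts that $\Phi_\sharp : \mathcal{P}^+(M)\to\mathcal{P}^+(M)$ without comment, so this is a gap you have inherited from the setup rather than one introduced by your argument. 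Your isometry computation is unaffected: the identity $G_{\Phi_\sharp\mu}(\Phi_\sharp\tau,\Phi_\sharp\tau_1) = G_\mu(\tau,\tau_1)$ holds at the level of the integrals whenever the Radon--Nikodym derivatives exist, independently of whether $\Phi_\sharp\mu$ lands in the restricted space $\mathcal{P}^+(M)$.
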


\begin{remark}
It is known that for any $\mu\in\mathcal{P}^+(M)$ there exists a homeomorphism $\Phi:M\to M$ satisfying $\Phi_{\sharp}\lambda=\mu$ (see \cite{Fa, OU}), i.e., the group of homeomorphisms of $M$ acts on $\mathcal{P}^+(M)$ transitively.
Hence, we find that $({\mathcal P}^+(M),G)$ is a Riemannian homogeneous space.\end{remark}

The embedding $\rho:\mathcal{P}^+(M)\hookrightarrow L^2(M,\lambda)$ defined at \eqref{27} satisfies the following.
\begin{proposition}
The pull-back of the $L^2$-inner product $\langle \cdot, \cdot\rangle_{L^2}$ on $L^2(M,\lambda)$ by $\rho$ coincides with the Fisher metric $G$, i.e.,
for $\tau, \tau_1\in T_{\mu}\mathcal{P}^+(M)$, $\mu\in\mathcal{P}^+(M)$ we have
\begin{equation*}
\left\langle d\rho_{\mu}\tau,d\rho_{\mu}\tau_1\right\rangle_{L^2}
= G_{\mu}(\tau,\tau_1).
\end{equation*}
\end{proposition}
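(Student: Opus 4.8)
The plan is to compute the differential $d\rho_\mu$ explicitly in terms of density functions and then substitute it into the defining formula \eqref{35} for the Fisher metric; the computation is short once the differentiability of $\rho$ as a map into $L^2(M,\lambda)$ is justified.

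First I would fix $\mu = f\,\lambda \in \mathcal{P}^+(M)$ and a tangent vector $\tau = h\,\lambda \in T_\mu\mathcal{P}^+(M)$, and use the affine variation $\mu(s) = \mu + s\tau = (f + sh)\,\lambda$. Since $M$ is compact and $f \in C^0(M)$ is strictly positive, $f$ is bounded below by some $\varepsilon > 0$, and $h \in C^0(M)$ is bounded, so $\mu(s) \in \mathcal{P}^+(M)$ for $|s|$ small. Applying $\rho$ yields the curve $s \mapsto \rho(\mu(s)) = 2\sqrt{f + sh}$ in $L^2(M,\lambda)$. Differentiating pointwise gives $\frac{d}{ds}\big|_{s=0}\, 2\sqrt{f + sh} = h/\sqrt{f}$, and since $f \ge \varepsilon$ the difference quotients $\big(2\sqrt{f+sh} - 2\sqrt{f}\,\big)/s$ converge to $h/\sqrt{f}$ uniformly on $M$, hence in $L^2(M,\lambda)$. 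Therefore $d\rho_\mu(\tau) = h/\sqrt{f}$, which lies in $L^2(M,\lambda)$ because $h \in L^2$ and $1/\sqrt{f}$ is bounded.

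It then remains to substitute this expression into the $L^2$-inner product. For $\tau = h\,\lambda$ and $\tau_1 = h_1\,\lambda$ in $T_\mu\mathcal{P}^+(M)$,
\[
\langle d\rho_\mu\tau,\, d\rho_\mu\tau_1\rangle_{L^2} = \int_{x\in M} \frac{h(x)}{\sqrt{f(x)}}\,\frac{h_1(x)}{\sqrt{f(x)}}\,d\lambda(x) = \int_{x\in M}\frac{h(x)}{f(x)}\,\frac{h_1(x)}{f(x)}\,f(x)\,d\lambda(x),
\]
and the right-hand side is precisely $G_\mu(\tau,\tau_1)$ by \eqref{35}. The only step that needs genuine care is confirming that the naive pointwise differentiation of $s \mapsto 2\sqrt{f+sh}$ actually computes the derivative of $\rho$ as a map into the Banach space $L^2(M,\lambda)$; once the uniform positivity of $f$ on the compact manifold $M$ is invoked this is routine, and the remainder of the proof is a one-line computation.
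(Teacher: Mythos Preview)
Your argument is correct and is exactly the direct computation one expects: compute $d\rho_\mu(h\lambda)=h/\sqrt{f}$ via the affine variation and plug into the $L^2$-pairing to recover \eqref{35}. The paper states this proposition without proof, so there is nothing to compare against; your added care about uniform positivity of $f$ on the compact $M$ to justify $L^2$-differentiability is appropriate and more than the paper demands.
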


\begin{remark}
Since the image of the embedding $\rho$ satisfies ${\rm Im}\ \rho \subset S^{\infty}(2) = \{ h\in L^2(M,\lambda)\, \vert\, \vert h\vert_{L^2} = 2\}$,
we can realize the Riemannian geometry of $({\mathcal P}^+(M),G)$ as an extrinsic submanifold geometry inside the infinite dimensional sphere $S^{\infty}(2)$ of radius $2$.
\end{remark}

\begin{remark}
The Fisher metric $G$ is obtained also as the second derivative of the Kullback-Leibler divergence
\begin{equation*}
D_{\mathrm{KL}} : \mathcal{P}^+(M) \times \mathcal{P}^+(M) \rightarrow {\mathbb R};\,
D_{\mathrm{KL}}(\mu\vert\vert\mu_1) = -\int_{x\in M} \log \left(\frac{d\mu_1}{d\mu}(x)\right) d\mu(x),
\end{equation*}
i.e., one observes $\left.\frac{d^2}{d t^2}\right|_{t=0} D_{\mathrm{KL}}(\mu\vert\vert\mu + t \tau) = G_{\mu}(\tau,\tau)$.
See \cite{AN} for the arguments for statistical manifolds.
$D_{\mathrm{KL}}$ is called  a potential in Hesse geometry (\cite{Shima}).
\end{remark}

Since the Fisher metric $G$ is regarded as a Riemannian metric on $\mathcal{P}^+(M)$, the Levi-Civita connection  is uniquely determined for the metric $G$.
By the method of constant vector fields employed by T. Friedrich \cite{Fr}, we can express the Levi-Civita connection explicitly.

For any vector $\tau\in\mathcal{V}$, we define a constant vector field on $\mathcal{P}^+(M)$ as follows:
$\tau_{\mu} := \tau \in  T_{\mu}\mathcal{P}^+(M)$ at  $\mu\in\mathcal{P}^+(M)$.
The integral curve of the constant vector field $\tau$ starting from $\mu\in\mathcal{P}^+(M)$ is given by $t\mapsto\mu+t\tau$.

\begin{theorem}[\cite{Fr}]\label{Levicivita}

Let $\tau, \tau_1\in \mathcal{V}$ be vectors regarded as constant vector fields on $\mathcal{P}^+(M)$. Then, the Levi-Civita connection $\nabla$ of the Fisher metric $G$ is represented by
 \begin{equation}\label{54}
 (\nabla_{\tau}\tau_1)_{\mu}
=-\frac{1}{2}\left(\frac{d\tau}{d\mu}\frac{d\tau_1}{d\mu}-G_{\mu}(\tau,\tau_1)\right)\mu.
\end{equation}
\end{theorem}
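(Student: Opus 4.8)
The plan is to pin down $\nabla$ from the Koszul formula, using the two features that make $\mathcal{P}^+(M)$ behave like an affine space: every $\tau\in\mathcal{V}$ is realized at each point $\mu$ by the constant vector field it generates, and constant vector fields have vanishing Lie bracket (their integral curves $\mu\mapsto\mu+t\tau$ commute). Thus it suffices to establish \eqref{54} by pairing both sides with an arbitrary third constant vector field $\tau_2$ and then invoking positive definiteness of $G_{\mu}$ on $T_{\mu}\mathcal{P}^+(M)$. Since all brackets vanish, the Koszul formula collapses to
\[
2\,G(\nabla_{\tau}\tau_1,\tau_2)=\tau\,G(\tau_1,\tau_2)+\tau_1\,G(\tau_2,\tau)-\tau_2\,G(\tau,\tau_1).
\]

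The key computation is the directional derivative $\tau\,G(\tau_1,\tau_2)$ at $\mu=f\lambda$. I would obtain it by differentiating along the integral curve $t\mapsto\mu+t\tau=(f+th)\lambda$, which lies in $\mathcal{P}^+(M)$ for small $t$ because $f$ is continuous and strictly positive on the compact manifold $M$, hence bounded below. Writing $\tau_i=h_i\lambda$, this yields $\frac{d}{dt}\big|_{0}\int_M \frac{h_1h_2}{f+th}\,d\lambda=-\int_M\frac{d\tau}{d\mu}\frac{d\tau_1}{d\mu}\frac{d\tau_2}{d\mu}\,d\mu$. Introducing the totally symmetric trilinear form $P(\tau,\tau_1,\tau_2):=\int_M\frac{d\tau}{d\mu}\frac{d\tau_1}{d\mu}\frac{d\tau_2}{d\mu}\,d\mu$, the right-hand side of the Koszul identity equals $-P(\tau,\tau_1,\tau_2)$ (the first two terms contribute $-P$ each, the last $+P$), so $G_{\mu}(\nabla_{\tau}\tau_1,\tau_2)=-\tfrac12 P(\tau,\tau_1,\tau_2)$.

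It remains to recognize $-\tfrac12 P(\tau,\tau_1,\cdot)$ as $G_{\mu}$-pairing against the vector claimed in \eqref{54}. Set $\sigma:=-\tfrac12\big(\frac{d\tau}{d\mu}\frac{d\tau_1}{d\mu}-G_{\mu}(\tau,\tau_1)\big)\mu$. First I would check $\sigma\in T_{\mu}\mathcal{P}^+(M)$: its density is continuous, and $\int_M d\sigma=0$ precisely because $\int_M\frac{d\tau}{d\mu}\frac{d\tau_1}{d\mu}\,d\mu=G_{\mu}(\tau,\tau_1)$, which is the very definition \eqref{35} of the Fisher metric. Then $G_{\mu}(\sigma,\tau_2)=-\tfrac12 P(\tau,\tau_1,\tau_2)+\tfrac12 G_{\mu}(\tau,\tau_1)\int_M\frac{d\tau_2}{d\mu}\,d\mu=-\tfrac12 P(\tau,\tau_1,\tau_2)$, the last integral vanishing by the defining constraint \eqref{32} on $T_{\mu}\mathcal{P}^+(M)$. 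Hence $G_{\mu}(\sigma,\tau_2)=G_{\mu}(\nabla_{\tau}\tau_1,\tau_2)$ for every $\tau_2\in T_{\mu}\mathcal{P}^+(M)$, and positive definiteness of $G_{\mu}$ gives $\nabla_{\tau}\tau_1=\sigma$, which is \eqref{54}.

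The main obstacle is the infinite-dimensional bookkeeping: the Koszul formula only identifies the linear functional $\tau_2\mapsto G(\nabla_{\tau}\tau_1,\tau_2)$, and since $G$ is a weak metric there is no automatic Riesz representation; exhibiting the explicit $\sigma$ and verifying $\sigma\in T_{\mu}\mathcal{P}^+(M)$ is exactly what circumvents this. The remaining point needing care, differentiation under the integral sign in the key computation, is routine given the uniform bound $f\ge\delta>0$ on the compact $M$. An alternative route would be to take \eqref{54} together with the Leibniz rule as the definition of a connection and verify directly that it is torsion free (immediate, since the right-hand side of \eqref{54} is symmetric in $\tau,\tau_1$ while $[\tau,\tau_1]=0$) and metric compatible (both $\tau\,G(\tau_1,\tau_2)$ and $G(\nabla_{\tau}\tau_1,\tau_2)+G(\tau_1,\nabla_{\tau}\tau_2)$ equal $-P(\tau,\tau_1,\tau_2)$), then appeal to the uniqueness of the Levi-Civita connection.
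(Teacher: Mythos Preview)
Your argument is correct and is precisely the standard derivation via the Koszul formula with constant vector fields; the paper itself does not supply a proof of this theorem but attributes it to Friedrich \cite{Fr}, noting only that ``by the method of constant vector fields employed by T.\ Friedrich'' one obtains the explicit formula. Your computation $\tau\,G_{\mu}(\tau_1,\tau_2)=-P(\tau,\tau_1,\tau_2)$, the collapse of Koszul to $G_{\mu}(\nabla_{\tau}\tau_1,\tau_2)=-\tfrac12 P$, and the identification of the representing vector $\sigma$ (including the check that $\sigma\in T_{\mu}\mathcal{P}^+(M)$, which the paper also remarks on immediately after the statement) are exactly what that method amounts to, and your attention to the weak-metric issue is the appropriate infinite-dimensional care.
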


We remark that the right hand side of (\ref{54}) gives an element of $\mathcal{V}$.

On the Riemannian curvature tensor of the metric $G$ we have the following.
\begin{theorem}[\cite{Fr, I}]\label{Riemanncurvature}
The Riemannian curvature tensor of the Fisher metric $G$ is given as follows:
\begin{equation}\label{55}
R_{\mu}(\tau_1,\tau_2)\tau=\frac{1}{4}\left(G_{\mu}(\tau,\tau_2)\tau_1-G_{\mu}(\tau,\tau_1)\tau_2\right),
\end{equation}
which indicates that $(\mathcal{P}^+(M),G)$ is a Riemannian manifold of constant sectional curvature $1/4$.
\end{theorem}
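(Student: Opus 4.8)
The plan is to obtain \eqref{55} by a direct computation from the explicit connection formula \eqref{54}, following T.~Friedrich's device of constant vector fields. Fix $\mu=f\,\lambda\in\mathcal{P}^+(M)$ and let $\tau=h\,\lambda,\ \tau_1=h_1\,\lambda,\ \tau_2=h_2\,\lambda\in\mathcal{V}$, viewed as constant vector fields. Constant vector fields pairwise commute, so $[\tau_1,\tau_2]=0$ and
\begin{equation*}
R_{\mu}(\tau_1,\tau_2)\tau=\bigl(\nabla_{\tau_1}\nabla_{\tau_2}\tau\bigr)_{\mu}-\bigl(\nabla_{\tau_2}\nabla_{\tau_1}\tau\bigr)_{\mu}.
\end{equation*}
The point requiring care is that $\nabla_{\tau_2}\tau$ is no longer constant, so \eqref{54} cannot simply be iterated. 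To circumvent this I would write $\nabla=D+\Gamma$, where $D$ is the flat affine derivative on $\mathcal{P}^+(M)$ — the connection for which all constant vector fields are parallel, i.e.\ the $(-1)$-connection of \S\ref{alpha}, whose geodesics are $t\mapsto\mu+t\tau$ — and $\Gamma$ is the $(2,1)$-tensor read off from \eqref{54},
\begin{equation*}
\Gamma(X,Y)_{\mu}=-\tfrac12\Bigl(\tfrac{dX}{d\mu}\,\tfrac{dY}{d\mu}-G_{\mu}(X,Y)\Bigr)\mu .
\end{equation*}
One checks that the right-hand side is symmetric and $C^{\infty}(\mathcal{P}^+(M))$-bilinear in $(X,Y)$, hence genuinely tensorial; since $D_{\tau_2}\tau_1=0$ for constant fields, the connection $D+\Gamma$ agrees with the Levi-Civita connection on every pair of constant vector fields, and two connections whose difference tensor vanishes on a set of vectors spanning each tangent space must coincide. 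Thus $\nabla=D+\Gamma$ on all of $\mathcal{P}^+(M)$.

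Next I would use $\nabla_{\tau_2}\tau=D_{\tau_2}\tau+\Gamma(\tau_2,\tau)=\Gamma(\tau_2,\tau)$ to expand
\begin{equation*}
\nabla_{\tau_1}\nabla_{\tau_2}\tau=D_{\tau_1}\bigl(\Gamma(\tau_2,\tau)\bigr)+\Gamma\bigl(\tau_1,\Gamma(\tau_2,\tau)\bigr),
\end{equation*}
and, antisymmetrizing in $1\leftrightarrow2$,
\begin{equation*}
R(\tau_1,\tau_2)\tau=\bigl(D_{\tau_1}\Gamma(\tau_2,\tau)-D_{\tau_2}\Gamma(\tau_1,\tau)\bigr)+\bigl(\Gamma(\tau_1,\Gamma(\tau_2,\tau))-\Gamma(\tau_2,\Gamma(\tau_1,\tau))\bigr).
\end{equation*}
Each term is a routine computation with density functions. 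Writing $a_i=d\tau_i/d\mu=h_i/f$ and $\langle\phi\rangle_{\mu}=\int_M\phi\,d\mu$ — so that $G_{\mu}(\tau_i,\tau_j)=\langle a_ia_j\rangle_{\mu}$, $a_i\,\mu=\tau_i$, and $\langle a_i\rangle_{\mu}=\int_M h_i\,d\lambda=0$ by the tangency condition in \eqref{32} — one differentiates along $t\mapsto\mu+t\tau_1$, keeping track of the $\mu$-dependence of both the functions $a_i$ and the scalar $G_{\mu}(\tau_2,\tau)$. The first bracket then equals $\tfrac12\bigl(G_{\mu}(\tau,\tau_2)\tau_1-G_{\mu}(\tau,\tau_1)\tau_2\bigr)$, the triple integrals $\langle a_1a_2a\rangle_{\mu}$ cancelling; the second bracket equals $-\tfrac14\bigl(G_{\mu}(\tau,\tau_2)\tau_1-G_{\mu}(\tau,\tau_1)\tau_2\bigr)$, where $\langle a_i\rangle_{\mu}=0$ is exactly what discards the spurious terms. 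Adding, the coefficient is $\tfrac12-\tfrac14=\tfrac14$, which is \eqref{55}. Finally, a curvature tensor of the form $R(X,Y)Z=k\bigl(G(Z,Y)X-G(Z,X)Y\bigr)$ with $k$ a genuine constant has $\sec(X,Y)=k$ for every orthonormal pair $X,Y$, so the constant-curvature-$1/4$ conclusion is immediate and requires no Schur-type argument.

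I expect the main obstacle to be purely organizational: justifying that $\Gamma$ is a bona fide tensor (so that $\nabla=D+\Gamma$ is legitimate) and then performing the two $t$-differentiations of $\Gamma(\tau_i,\tau)$ without dropping any of the several $\mu$-dependent factors that appear. As an independent check, one can bypass the computation entirely: by the Proposition above and the Remark following it, $\rho$ isometrically embeds $\mathcal{P}^+(M)$ onto an open subset of the Hilbert sphere $S^{\infty}(2)\subset L^2(M,\lambda)$, and the Gauss equation for a sphere of radius $r$ in a Hilbert space — whose second fundamental form is $\mathrm{II}(X,Y)=-r^{-1}\langle X,Y\rangle\,\nu$ with $\vert\nu\vert_{L^2}=1$ — gives sectional curvature $r^{-2}$; with $r=2$ this yields $1/4$ and the curvature tensor \eqref{55}.
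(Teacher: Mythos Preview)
The paper does not actually supply a proof of this theorem: it is stated with attribution to \cite{Fr, I} and left at that. Your argument is correct and, in fact, uses exactly the two ingredients the paper itself foregrounds. The main line---writing $\nabla=D+\Gamma$ with $D$ the flat $(-1)$-connection and $\Gamma$ the tensor read off from \eqref{54}, then computing $R$ on constant vector fields---is precisely Friedrich's constant-vector-field device invoked in Theorem~\ref{Levicivita}; I checked your two brackets and they do come out to $\tfrac12$ and $-\tfrac14$ of $G_{\mu}(\tau,\tau_2)\tau_1-G_{\mu}(\tau,\tau_1)\tau_2$ respectively, with the symmetric terms $\langle a a_1 a_2\rangle_{\mu}$ cancelling under the antisymmetrization and $\langle a_i\rangle_{\mu}=0$ removing the residual pieces. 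Your alternative check via the embedding $\rho$ into $S^{\infty}(2)$ is also exactly what the paper suggests in the Remark following Proposition~2.6, so either route is fully in the spirit of the exposition.
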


The following theorem gives a description of geodesics associated to the Levi-Civita connection.
\begin{theorem}[\cite{I, IS6, IS5}]\label{geodesic}
Let $t\mapsto\mu(t)= f(x,t)\,\lambda$ be a geodesic on $\mathcal{P}^+(M)$ with respect to the Levi-Civita connection $\nabla$ with initial conditions $\mu(0)=\mu$ and $\dot{\mu}(0) = \tau \in T_{\mu}\mathcal{P}^+(M)$, $\vert \tau\vert_{\mu} =1$.
Then, $\mu(t)$ is described by
\begin{equation}\label{56}
\mu(t)=\left(\cos\frac{t}{2}+\sin\frac{t}{2}\frac{d\tau}{d\mu}\right)^2\mu.
\end{equation}
\end{theorem}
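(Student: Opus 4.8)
The plan is to verify directly that the curve $\mu(t)$ given by \eqref{56} solves the geodesic equation $\tfrac{D}{dt}\dot\mu(t)=0$ for the Levi-Civita connection of Theorem \ref{Levicivita}, to check the initial conditions $\mu(0)=\mu$, $\dot\mu(0)=\tau$, and then to quote uniqueness of the geodesic through a given point with a given initial velocity. Conceptually the formula \eqref{56} is forced by the isometric embedding $\rho$ of \eqref{27}: writing $\mu=f\,\lambda$ and $\tau=h\,\lambda$, on the $t$-interval where $\cos\tfrac t2+\tfrac hf\sin\tfrac t2>0$ one computes $\rho(\mu(t))=\cos\tfrac t2\cdot(2\sqrt f\,)+\sin\tfrac t2\cdot(2h/\sqrt f\,)$, and the two $L^2$-functions $2\sqrt f$ and $2h/\sqrt f$ are mutually orthogonal and each of $L^2$-norm $2$ (because $\int_M h\,d\lambda=0$ and $\vert\tau\vert_\mu=1$); thus $\rho\circ\mu$ is a unit-speed great circle of $S^{\infty}(2)\subset L^2(M,\lambda)$, whose ambient second $t$-derivative $-\tfrac14\,\rho(\mu(t))$ is normal to the sphere, so it is a geodesic of $S^{\infty}(2)$, and hence $\mu$ is a geodesic of $(\mathcal{P}^+(M),G)$ on the interval where it stays inside $\mathcal{P}^+(M)$. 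I would still carry out the intrinsic computation, since it uses only \eqref{54} and \eqref{35}.

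First I would work out the covariant derivative of the velocity field along a general curve $\mu(t)=f(\cdot,t)\,\lambda$. Since \eqref{54} applies only to constant vector fields, fix $t_0$, view $\dot\mu(t)=\partial_t f(\cdot,t)\,\lambda$ as a $\mathcal{V}$-valued field along the curve, and compare it with the constant vector field $\sigma$ of value $\partial_t f(\cdot,t_0)\in\mathcal{V}$. The difference $\dot\mu(t)-\sigma$ vanishes at $t_0$, so there its covariant derivative equals the ordinary $t$-derivative $\partial_t^2 f(\cdot,t_0)\,\lambda$; and the covariant derivative of $\sigma$ along the curve at $t_0$ is $(\nabla_\sigma\sigma)_{\mu(t_0)}$, because $\dot\mu(t_0)=\sigma$ there. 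Using \eqref{54} and \eqref{35} this gives, for every $t$,
\begin{equation*}
\frac{D\dot\mu}{dt}(t)=\left[\,\partial_t^2 f-\tfrac12\left(\frac{(\partial_t f)^2}{f}-G_{\mu(t)}(\dot\mu,\dot\mu)\,f\right)\right]\lambda,\qquad f=f(\cdot,t),
\end{equation*}
so $\mu(t)$ is a geodesic precisely when the bracketed density is identically zero.

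Second I would substitute \eqref{56}: put $a:=d\tau/d\mu=h/f(\cdot,0)$ and $v(x,t):=\cos\tfrac t2+a(x)\sin\tfrac t2$, so that \eqref{56} reads $f(\cdot,t)=v^2\,f(\cdot,0)$. From $\partial_t^2 v=-\tfrac14 v$ one gets $\partial_t f=2v(\partial_t v)f(\cdot,0)$, $\partial_t^2 f=2((\partial_t v)^2-\tfrac14 v^2)f(\cdot,0)$ and $(\partial_t f)^2/f(\cdot,t)=4(\partial_t v)^2 f(\cdot,0)$, hence $\partial_t^2 f-\tfrac12(\partial_t f)^2/f=-\tfrac12 v^2 f(\cdot,0)=-\tfrac12 f(\cdot,t)$; and integrating $4(\partial_t v)^2 f(\cdot,0)$ over $M$, the cross term drops since $\int_M a\,f(\cdot,0)\,d\lambda=\int_M h\,d\lambda=0$, leaving $G_{\mu(t)}(\dot\mu,\dot\mu)=\sin^2\tfrac t2\int_M f(\cdot,0)\,d\lambda+\cos^2\tfrac t2\int_M a^2 f(\cdot,0)\,d\lambda=\sin^2\tfrac t2+\cos^2\tfrac t2\,\vert\tau\vert_\mu^2=1$. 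Therefore the bracketed density vanishes identically. The initial data are immediate from $v(\cdot,0)=1$ and $\partial_t v(\cdot,0)=a/2$: $\mu(0)=f(\cdot,0)\,\lambda=\mu$ and $\dot\mu(0)=a\,f(\cdot,0)\,\lambda=h\,\lambda=\tau$. Uniqueness of geodesics then identifies \eqref{56} as the desired geodesic. The one step I expect to require care is the first: passing from the constant-vector-field formula \eqref{54} to the covariant derivative of a genuinely varying field along a curve, which in this infinite-dimensional setting needs the (easy but not purely formal) fact that the covariant and ordinary $t$-derivatives agree at a parameter where the field vanishes; everything after that is trigonometry and integration.
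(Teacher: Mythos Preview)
Your argument is correct. The paper itself omits the proof, stating only that it ``is shown by the lemmas given by Friedrich (see \cite{Fr, I}), while the detail of the proof is omitted,'' so your direct verification---deriving the geodesic equation from \eqref{54} and substituting the candidate curve---supplies more detail than the paper does. Your bracketed geodesic equation is precisely the $\alpha=0$ case of the paper's later equation \eqref{eq_a-geo} (rewrite $\partial_t^2 f/f$ in terms of $\partial_t(\dot f/f)$), and your conceptual remark via the embedding $\rho$---that $\rho\circ\mu$ traces a unit-speed great circle on $S^{\infty}(2)$---is the geometric picture underlying Friedrich's original treatment, so both routes you sketch align with what the paper cites. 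The care you flag about extending \eqref{54} to nonconstant fields along a curve is handled correctly by your comparison with the constant field $\sigma$; the only residual point is the appeal to uniqueness of geodesics in this infinite-dimensional setting, which the paper also takes for granted (it later speaks of the exponential map and totally normal neighborhoods).
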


As a consequence, it is shown that the geodesic $\mu(t)$ is periodic of period $2\pi$, while $(\mathcal{P}^+(M), G)$ is not geodesically complete.
In fact, the value of $\mu(t)$ at $t= \pi$ is given  from (\ref{56}) by $\mu(\pi)=\left(d\tau/d\mu\right)^2\mu$, whereas there exists $x_0\in M$ such that $d\tau/d\mu(x_0)=0$, since $\tau$ satisfies $\int_{x\in M} \left(d\tau/d\mu\right)d\mu(x) = \int_{x\in M} d\tau(x) = 0$.
Hence, the density function of a probability measure $\mu(\pi)$ is never positive at least at $x_0$ and then $\mu(\pi)\not\in \mathcal{P}^+(M)$. We find thus that any geodesic $\mu(t)$ can not reach $\mu(\pi)$ inside $\mathcal{P}^+(M)$.

Theorem \ref{geodesic} is shown by the lemmas given by Friedrich (see \cite{Fr, I}), while the detail of the proof is omitted.
We give now definition of a specific function together with a certain map for intimate investigation of geodesics associated to the Fisher metric.

\begin{definition}\label{geometricmean}
Let $\mu$, $\mu_1\in\mathcal{P}^+(M)$.
Define a function $\ell : \mathcal{P}^+(M)\times \mathcal{P}^+(M)\rightarrow [0,\pi)$ and a map $\sigma : \mathcal{P}^+(M)\times \mathcal{P}^+(M)\rightarrow \mathcal{P}^+(M)$, respectively, as follows:
\begin{equation}
\cos \frac{\ell(\mu,\mu_1)}{2} := \int_{x\in M} \sqrt{\frac{d \mu_1}{d\mu}}(x) d \mu(x),  \end{equation}
\begin{equation}
{ \sigma}(\mu,\mu_1):= \, \left(\cos \frac{\ell(\mu,\mu_1)}{2}\right)^{-1}\, \sqrt{\frac{d \mu_1}{d \mu}} \,\mu.
\end{equation}
We call $\sigma(\mu,\mu_1)$ a normalized geometric mean of $\mu$ and $\mu_1$.
\end{definition}

Then, we have the following.
\begin{theorem}[\cite{I,IS6}]\label{geodesic-x}
For $\mu$,\, $\mu_1$, there exists a unique geodesic $\mu(t)$ joining $\mu$ and $\mu_1$ which satisfies $\mu(0) = \mu$, $\mu(\ell) = \mu_1$ and $\vert \dot \mu(0)\vert_{\mu} = 1$ and $\mu([0,\ell]) \subset {\mathcal P}^+(M)$.
Moreover,  $\ell = \ell(\mu,\mu_1)$.
\end{theorem}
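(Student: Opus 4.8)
The plan is to use the explicit description of unit-speed geodesics from Theorem \ref{geodesic} and to produce the required geodesic by choosing the initial velocity cleverly, reading off $\ell(\mu,\mu_1)$ from the normalization conditions. Throughout we assume $\mu\neq\mu_1$. A Cauchy--Schwarz estimate applied to $\int_M\sqrt{d\mu_1/d\mu}\,d\mu=\int_M\sqrt{(d\mu_1/d\lambda)(d\mu/d\lambda)}\,d\lambda$ shows this integral lies in $(0,1)$, so $\ell:=\ell(\mu,\mu_1)\in(0,\pi)$ and $\sin(\ell/2)>0$.

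For existence, I would set
$$
\varphi:=\frac{\sqrt{d\mu_1/d\mu}-\cos(\ell/2)}{\sin(\ell/2)}\in C^0(M),\qquad \tau:=\varphi\,\mu .
$$
First one checks $\tau\in T_\mu\mathcal{P}^+(M)$: continuity of the density is clear, and $\int_M\varphi\,d\mu=0$ is exactly the defining equation of $\ell(\mu,\mu_1)$. Next one verifies $|\tau|_\mu=1$ by expanding $\int_M\varphi^2\,d\mu$ and using $\int_M(d\mu_1/d\mu)\,d\mu=1$ together with $\int_M\sqrt{d\mu_1/d\mu}\,d\mu=\cos(\ell/2)$; the result collapses to $(1-\cos^2(\ell/2))/\sin^2(\ell/2)=1$. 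By Theorem \ref{geodesic}, $\mu(t):=\bigl(\cos(t/2)+\sin(t/2)\,\varphi\bigr)^2\mu$ is then the geodesic with $\mu(0)=\mu$ and $\dot\mu(0)=\tau$. Since $\cos(\ell/2)+\sin(\ell/2)\,\varphi=\sqrt{d\mu_1/d\mu}$, squaring and multiplying by $\mu$ gives $\mu(\ell)=\mu_1$. Finally, the identity
$$
\cos\frac{t}{2}+\sin\frac{t}{2}\,\varphi(x)=\frac{1}{\sin(\ell/2)}\Bigl(\sin\tfrac{\ell-t}{2}+\sin\tfrac{t}{2}\,\sqrt{(d\mu_1/d\mu)(x)}\Bigr)
$$
is strictly positive for every $x\in M$ and $t\in[0,\ell]$, so $\mu([0,\ell])\subset\mathcal{P}^+(M)$.

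For uniqueness (and the identification $\ell=\ell(\mu,\mu_1)$), let $\widetilde\mu$ be any geodesic with $\widetilde\mu(0)=\mu$, $\widetilde\mu(\widetilde\ell)=\mu_1$, $|\dot{\widetilde\mu}(0)|_\mu=1$ and $\widetilde\mu([0,\widetilde\ell])\subset\mathcal{P}^+(M)$. Writing $\widetilde\tau:=\dot{\widetilde\mu}(0)$ and $\widetilde\varphi:=d\widetilde\tau/d\mu$, Theorem \ref{geodesic} gives $\widetilde\mu(t)=\bigl(\cos(t/2)+\sin(t/2)\,\widetilde\varphi\bigr)^2\mu$. Since $\widetilde\tau\neq0$ and $\int_M\widetilde\varphi\,d\mu=0$, the continuous function $\widetilde\varphi$ vanishes somewhere on the connected manifold $M$, so $\widetilde\mu(\pi)$ has a density with a zero and therefore $\widetilde\mu(\pi)\notin\mathcal{P}^+(M)$; the hypothesis then forces $\widetilde\ell\in(0,\pi)$, hence $\sin(\widetilde\ell/2)>0$. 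The map $(x,t)\mapsto\cos(t/2)+\sin(t/2)\,\widetilde\varphi(x)$ is continuous on the compact connected set $M\times[0,\widetilde\ell]$, equals $1$ at $t=0$, and is nowhere zero (otherwise the density of $\widetilde\mu(t)$ would vanish), so it is strictly positive throughout; evaluating at $t=\widetilde\ell$ yields $\cos(\widetilde\ell/2)+\sin(\widetilde\ell/2)\,\widetilde\varphi=+\sqrt{d\mu_1/d\mu}$. Imposing $\int_M\widetilde\varphi\,d\mu=0$ gives $\cos(\widetilde\ell/2)=\int_M\sqrt{d\mu_1/d\mu}\,d\mu=\cos(\ell(\mu,\mu_1)/2)$, and since both half-angles lie in $[0,\pi/2)$ we conclude $\widetilde\ell=\ell(\mu,\mu_1)$; then $\widetilde\varphi=\varphi$, so $\widetilde\tau=\tau$ and $\widetilde\mu$ coincides with the geodesic constructed above.

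The step I expect to be the main obstacle is pinning down the correct branch of the square root in the uniqueness argument, that is, showing $\cos(t/2)+\sin(t/2)\,\widetilde\varphi$ stays positive on all of $[0,\widetilde\ell]$ rather than merely near $t=0$. This is precisely where the hypothesis $\widetilde\mu([0,\widetilde\ell])\subset\mathcal{P}^+(M)$ — together with the non-completeness phenomenon that bounds $\widetilde\ell$ strictly below $\pi$ — is indispensable; everything else reduces to bookkeeping of the two normalizations $\int_M\varphi\,d\mu=0$ and $\int_M\varphi^2\,d\mu=1$ against the definition of $\ell(\mu,\mu_1)$.
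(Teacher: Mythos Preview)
Your proof is correct and follows essentially the same route as the paper: your initial velocity $\tau=\varphi\,\mu$ is exactly the paper's $\tau=\frac{1}{\tan(\ell/2)}\bigl(\sigma(\mu,\mu_1)-\mu\bigr)$ once one unwinds the definition of $\sigma(\mu,\mu_1)$, and your positivity identity $\cos(t/2)+\sin(t/2)\,\varphi=\sin(\ell/2)^{-1}\bigl(\sin\tfrac{\ell-t}{2}+\sin\tfrac{t}{2}\sqrt{d\mu_1/d\mu}\bigr)$ is, upon squaring, precisely the paper's convex decomposition $\mu(t)=a(t)\mu+b(t)\mu_1+c(t)\sigma(\mu,\mu_1)$ with nonnegative coefficients summing to $1$. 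The only difference is packaging: the paper emphasizes that the geodesic segment lies in the $2$-simplex $\Delta^2(\mu,\mu_1,\sigma(\mu,\mu_1))$, which gives a pleasant geometric picture, whereas you argue the positivity of the square-root factor directly; your treatment of uniqueness (pinning down the sign via connectedness of $M\times[0,\widetilde\ell]$ and bounding $\widetilde\ell<\pi$) is more explicit than what the paper records here, which simply cites \cite{IS6}.
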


In fact, let $\ell = \ell(\mu,\mu_1)$. Then
\begin{equation}
\mu(t) = \left(\cos \frac{t}{2} + \sin \frac{t}{2}\ \frac{d\tau}{d\mu} \right)^2 \mu,
\qquad\tau = \frac{1}{\tan (\ell/2)}\, \left({\sigma}(\mu, \mu_1)- \mu\right)
\end{equation}
is the unique geodesic satisfying $\mu(0) = \mu$, $\mu(\ell) = \mu_1$, and
$\tau\in T_{\mu}\mathcal{P}^+(M)$ fulfills $\vert \tau \vert_{\mu} = 1$ (\cite{IS6}).
Moreover, by using the normalized geometric mean ${\sigma}(\mu,\mu_1)$ we have for any $t$
\begin{equation*}
\mu(t) = a(t)\,\mu + b(t)\,\mu_1 + c(t)\,\sigma(\mu,\mu_1).
\end{equation*}
Here $a(t), b(t), c(t)$ are the functions on $[0,\ell]$, respectively, defined as follows:
\begin{equation*}
a(t) = \frac{\sin^2\frac{\ell - t}{2}}{\sin^2\frac{\ell}{2}},\quad
b(t) = \frac{\sin^2 \frac{t}{2}}{\sin^2 \frac{\ell}{2}},\quad
c(t) = 2 \cos \frac{\ell}{2} \frac{\sin\frac{\ell - t}{2}\, \sin \frac{t}{2}}{\sin^2 \frac{\ell}{2}}.
\end{equation*}
Since they satisfy $a(t), b(t), c(t) \geq 0$ and $a(t) + b(t) + c(t) = 1$,
$\mu(t)$ lies on the 2-simplex $\Delta^2(\mu,\mu_1,\sigma(\mu,\mu_1))$ and it is concluded that $\mu(t) \in {\mathcal P}^+(M)$ for any $t\in [0,\ell]$ (see Figure \ref{geodesicseg2}).
Moreover, we find easily that the function $\ell(\mu,\mu_1)$ gives arc-length of the geodesic segment(\cite{IS-7}).

\begin{remark}
The normalized geometric mean $\sigma(\mu,\mu_1)$ of two probability measures is characterized as the intersection of lines $L_1$, $L_2$ which are the tangent lines of the geodesic segment $\mu(t)$, $t\in[0,\ell]$ at the end points $\mu, \mu_1$, respectively.

Hence, 
we have
$$
\sigma(\mu,\mu_1) = \mu + \tan \frac{\ell}{2} \tau = \mu_1 + \tan \frac{\ell}{2} \tau_1,
$$where $\ell = \ell(\mu,\mu_1)$ and $\tau = {\dot\mu}(0)$, $\tau_1 = -{\dot\mu}(\ell)$
(see Figure \ref{geodesicseg2}).

\begin{figure}[htbp]
\begin{center}\begin{center}
\includegraphics[width=7cm,pagebox=cropbox,clip]{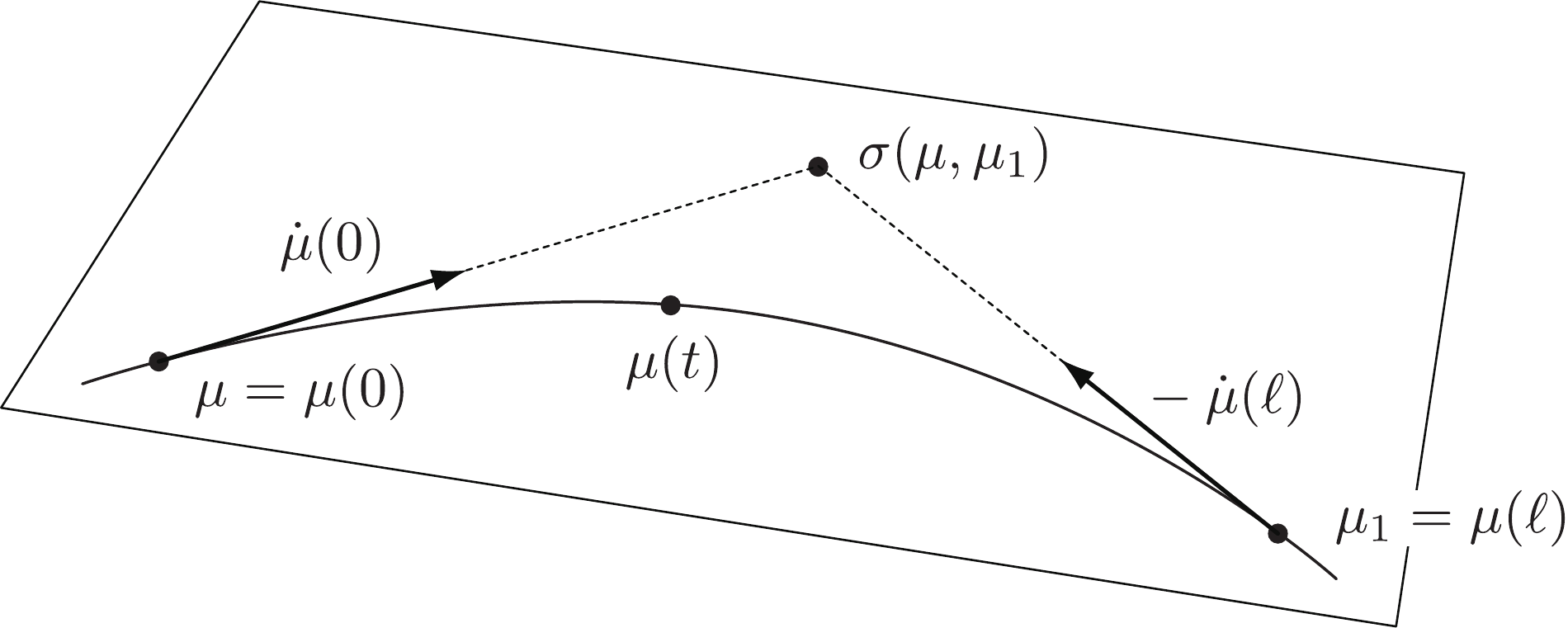}\end{center}
\caption{A geodesic segment $\mu(t)$ lies on a plane which contains the endpoints of $\mu(t)$ and their normalized geometric mean.}
\label{geodesicseg2}
\end{center}
\end{figure}
\end{remark}

\begin{remark}
Ohara \cite{O} considered a Hessian metric $g$ with respect to a certain potential on a symmetric cone $\Omega$, and defined a dualistic structure $(\nabla^{(\alpha)}, \nabla^{(-\alpha)}),\ -1\le\alpha\le 1$ on $(\Omega, g)$.
He showed that a midpoint of a geodesic segment with respect to $\nabla^{(\alpha)}$ (called an $\alpha$--geodesic segment) is an $\alpha$--power mean of endpoints.
Here the $\alpha$--power mean is an operator mean on $\Omega$ generated by a function $\displaystyle \sigma^{(\alpha)}_{1/2}(t)=\left\{(1+t^\alpha)/2\right\}^{1/\alpha}$.
In particular, the geometric mean is the $0$--power mean (see \cite{Bull}).

Incidentally, on the Riemannian manifold $(\mathcal{P}^+(M),G)$ a midpoint of a geodesic segment joining $\mu, \mu_1\in \mathcal{P}^+(M)$ is given by the normalized $1/2$\,--power mean.
 \end{remark}

\begin{theorem}\label{distance}
The Riemannian distance $d$ between probability measures $\mu$ and $\mu_1$ of $\mathcal{P}^+(M)$ is given by $d(\mu, \mu_1) = \ell(\mu,\mu_1)$.
A shortest path joining $\mu$ and $\mu_1$ is the geodesic segment $\mu(t)$, $t\in[0,\ell]$ which is given at Theorem \ref{geodesic-x}.
\end{theorem}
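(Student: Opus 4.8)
The plan is to read off the distance from the isometric embedding $\rho:\mathcal{P}^+(M)\hookrightarrow L^2(M,\lambda)$ of \eqref{27}, whose image lies on the round sphere $S^\infty(2)$ of radius $2$, as noted in the Remark following the Proposition above. By that Proposition, $d\rho$ is a linear isometry on each tangent space, so every piecewise $C^1$ curve $c$ in $\mathcal{P}^+(M)$ joining $\mu$ and $\mu_1$ has the same length as the curve $\rho\circ c$ in $L^2(M,\lambda)$; since $\rho\circ c$ lies on $S^\infty(2)$, its $L^2$-length coincides with its length in the induced round metric, which is at least the spherical distance $d_{S^\infty(2)}(\rho(\mu),\rho(\mu_1))$. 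Taking the infimum over all such $c$ gives the lower bound $d(\mu,\mu_1)\ge d_{S^\infty(2)}(\rho(\mu),\rho(\mu_1))$.

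Next I would compute this spherical distance explicitly. Any two points of a sphere centered at the origin lie in a common $2$-plane through the origin, so the geodesic distance on $S^\infty(2)$ between points $p,q$ is $2\arccos\bigl(\langle p,q\rangle_{L^2}/4\bigr)$. Taking $p=\rho(\mu)=2\sqrt{d\mu/d\lambda}$ and $q=\rho(\mu_1)=2\sqrt{d\mu_1/d\lambda}$, a direct computation using $d\mu_1/d\lambda=(d\mu_1/d\mu)(d\mu/d\lambda)$ gives
\[
\langle \rho(\mu),\rho(\mu_1)\rangle_{L^2}
= 4\int_{x\in M}\sqrt{\tfrac{d\mu}{d\lambda}\,\tfrac{d\mu_1}{d\lambda}}\,d\lambda
= 4\int_{x\in M}\sqrt{\tfrac{d\mu_1}{d\mu}}\,d\mu
= 4\cos\tfrac{\ell(\mu,\mu_1)}{2},
\]
by the defining relation of $\ell$ in Definition \ref{geometricmean}. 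Since $\ell(\mu,\mu_1)/2\in[0,\pi/2)$, this yields $d_{S^\infty(2)}(\rho(\mu),\rho(\mu_1))=2\cdot\tfrac{\ell(\mu,\mu_1)}{2}=\ell(\mu,\mu_1)$, and hence $d(\mu,\mu_1)\ge\ell(\mu,\mu_1)$.

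For the reverse inequality I would exhibit the minimizer by hand: the unit-speed geodesic segment $\mu(t)$, $t\in[0,\ell]$, of Theorem \ref{geodesic-x} joins $\mu$ to $\mu_1$, remains inside $\mathcal{P}^+(M)$ (this is exactly the $2$-simplex/positivity discussion recalled right after that theorem), and has length $\ell(\mu,\mu_1)$ because it is parametrized by arc length on $[0,\ell]$. Therefore $d(\mu,\mu_1)\le\ell(\mu,\mu_1)$, and combining the two bounds gives $d(\mu,\mu_1)=\ell(\mu,\mu_1)$ with $\mu(t)$, $t\in[0,\ell]$, realizing the distance. For the shortest-path assertion I would observe that any length-minimizing curve in $\mathcal{P}^+(M)$ maps under $\rho$ to a curve on $S^\infty(2)$ of length $d_{S^\infty(2)}(\rho(\mu),\rho(\mu_1))$, hence to a minimizing geodesic of the sphere; since the angular separation of $\rho(\mu)$ and $\rho(\mu_1)$ is $\ell/2<\pi/2<\pi$, the minimizing great-circle arc is unique, and it is precisely $\rho(\mu(\cdot))$, so the segment $\mu(t)$ is the shortest path.

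I expect the substantive calculations to be entirely routine; the only genuinely geometric point to be careful about is that $\mathcal{P}^+(M)$ is merely an open portion of $S^\infty(2)$, so one cannot simply quote completeness of the sphere for the upper bound — instead one must know that the great-circle arc between the two points stays inside $\mathcal{P}^+(M)$, which is the content of the positivity argument following Theorem \ref{geodesic-x}. No compactness or geodesic completeness of $(\mathcal{P}^+(M),G)$ is needed, since the minimizer is produced explicitly and the lower bound comes for free from the isometric embedding.
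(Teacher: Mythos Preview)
Your argument is correct and complete. It differs from the route the paper sketches in the Remark immediately following the theorem: there the authors use the identity $\cos\frac{\ell(\mu,\mu_1)}{2}=1-\frac{1}{8}\lvert\rho(\mu)-\rho(\mu_1)\rvert_{L^2}^2$ to identify totally normal neighborhoods, and then invoke the exponential map together with Gauss' lemma to obtain the local (and hence global) minimizing property of geodesics, i.e.\ they run the standard intrinsic Riemannian argument adapted to this infinite-dimensional setting, with details deferred to \cite{IS-8}. You instead exploit the isometric embedding $\rho$ extrinsically: the lower bound $d(\mu,\mu_1)\ge d_{S^\infty(2)}(\rho(\mu),\rho(\mu_1))=\ell(\mu,\mu_1)$ drops out immediately from length-preservation under $\rho$, and the upper bound is supplied by the explicit geodesic of Theorem~\ref{geodesic-x}, whose positivity on $[0,\ell]$ is already established. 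Your approach is more elementary in that it sidesteps any need to justify Gauss' lemma or the exponential-map formalism in infinite dimensions; the paper's approach, by contrast, sets up machinery (normal neighborhoods, Gauss' lemma) that is reusable for further intrinsic study of $(\mathcal{P}^+(M),G)$. One small check you could make explicit: a direct computation shows $\rho(\mu(t))=\cos\frac{t}{2}\,\rho(\mu)+2\sin\frac{t}{2}\,d\rho_\mu(\tau)$ with $\rho(\mu)\perp d\rho_\mu(\tau)$ and $\lvert d\rho_\mu(\tau)\rvert_{L^2}=1$, confirming that $\rho\circ\mu(\cdot)$ is precisely the unit-speed great-circle arc and hence that the minimizer is the stated geodesic segment.
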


\begin{remark}
The arc-length function $\ell$ and the embedding $\rho$ satisfy
$$
\cos\frac{\ell(\mu,\mu_1)}{2} = 1 -\frac{1}{8} 
\vert \rho(\mu) - \rho(\mu_1)\vert_{L^2}^2.
$$
We find from this relation that for each $\mu\in \mathcal{P}^+(M)$\, the subset $W = \{ \mu_1\in \mathcal{P}^+(M)\,\vert 
\vert \rho(\mu_1) - \rho(\mu)\vert_{L^2} < \varepsilon\}$, $\varepsilon < \pi/4$ is a totally normal neighborhood of $\mu$.
Hence, we are able to define the exponential map $\exp_{\mu} : T_{\mu} {\mathcal P}^+(M) \rightarrow {\mathcal P}^+(M)$ and moreover able to discuss Gauss' lemma and certain minimizing properties of geodesics. By using these facts and properties we obtain Theorem \ref{distance}.
Moreover, we can show that the diameter of $(\mathcal{P}^+(M), G)$ is equal to $\pi$, while a proof of this fact will be elsewhere (\cite{IS-8}).
\end{remark}

\section{$\alpha$--connections and dually flat structures}\label{alpha}

In information geometry, a torsion-free affine connection $\nabla^{(\alpha)}$ parametrized by $\alpha\in\mathbb{R}$, called an $\alpha$-connection, plays an important role.
The $(+1)$-connection and the $(-1)$-connection which are flat and called the $e$-connection and the $m$-connection, respectively, are particularly important. The connections
$\nabla^{(\alpha)}$ and $\nabla^{(- \alpha)}$ are said to be dual, or adjoint to each other, with respect to the Fisher metric.
Moreover, the connection $\nabla^{(0)}$ coincides with the Levi-Civita connection of the Fisher  metric (see \cite{AN}).
As we state as follows, the notion of  $\alpha$-connection can be defined also on the space of probability measures $\mathcal{P}^+(M)$ (see \cite{I, IS-7}).

\begin{definition}Let $\alpha\in {\Bbb R}$.
Define an embedding $\rho^{(\alpha)} : \mathcal{P}^+(M) \rightarrow L^k(M, \lambda)$ by
\begin{equation*}
\mu = f(x) \lambda \mapsto\left\{
\begin{array}{ll}
\frac{2}{1 - \alpha}\,f(x)^{(1-\alpha)/2},&\alpha \not= 1,\\
\log f(x),&\alpha = 1
\end{array}
\right.
\end{equation*}
Here we set $\displaystyle{k = 2/(1-\alpha)}$ for $\alpha \not= 1$, and $k = \infty$ for $\alpha = 1$.
\end{definition}

\begin{lem}
\begin{equation}\label{alphamap}
\langle d \rho^{(\alpha)} \tau, d \rho^{(- \alpha)} \tau_1 \rangle = G_{\mu}(\tau, \tau_1),\quad
\tau, \,  \tau_1 \in T_{\mu}\mathcal{P}(M).
\end{equation}
Here $d \rho^{(\alpha)}$ is the differential map of $\rho^{(\alpha)}$ at $\mu$ and $\langle\cdot, \cdot \rangle$ is the natural pairing map $L^k(M, \lambda) \times L^{k^{\ast}}(M, \lambda) \rightarrow \, \mathbb{R}$ {\rm (} $k^{\ast} := 2/(1+\alpha)${\rm )}.
\end{lem}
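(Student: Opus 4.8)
The plan is a direct computation: find the differential $d\rho^{(\beta)}_\mu$ for an arbitrary exponent $\beta$, observe that for $\beta=\alpha$ and $\beta=-\alpha$ the powers of the density occurring in the two factors multiply to $f^{-1}$, and recognise the resulting integrand as that of $G_\mu$ in \eqref{35}.

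\textbf{Step 1 (the differentials).} Fix $\mu=f\,\lambda\in\mathcal P^+(M)$ and $\tau=h\,\lambda,\ \tau_1=h_1\,\lambda\in T_\mu\mathcal P^+(M)$. As noted in \S\ref{2.1}, the affine curve $c(t)=\mu+t\tau=(f+th)\,\lambda$ lies in $\mathcal P^+(M)$ for $|t|$ small and has velocity $\tau$ at $t=0$. Since $M$ is compact and $f$ is continuous and strictly positive, $f$ takes values in a compact subinterval of $(0,\infty)$; hence for $\beta\neq 1$ the scalar map $s\mapsto\frac{2}{1-\beta}\,s^{(1-\beta)/2}$ is $C^1$ with bounded derivative there, the difference quotients $t^{-1}\bigl(\rho^{(\beta)}(c(t))-\rho^{(\beta)}(\mu)\bigr)$ converge uniformly on $M$, and
\[
d\rho^{(\beta)}_\mu(\tau)=\frac{d}{dt}\Big|_{t=0}\frac{2}{1-\beta}\,(f+th)^{(1-\beta)/2}=f^{-(1+\beta)/2}\,h,
\]
a continuous function on the compact $M$, hence an element of $L^{2/(1-\beta)}(M,\lambda)$. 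For $\beta=1$ the same argument applied to $s\mapsto\log s$ gives $d\rho^{(1)}_\mu(\tau)=h/f$, which is the value of $f^{-(1+\beta)/2}h$ at $\beta=1$; thus $d\rho^{(\beta)}_\mu(\tau)=f^{-(1+\beta)/2}\,h$ for every $\beta\in\mathbb R$.

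\textbf{Step 2 (the pairing).} Taking $\beta=\alpha$ and $\beta=-\alpha$ in Step 1 and using $(1+\alpha)/2+(1-\alpha)/2=1$, the pointwise product of the two factors is
\[
d\rho^{(\alpha)}_\mu(\tau)\cdot d\rho^{(-\alpha)}_\mu(\tau_1)=f^{-(1+\alpha)/2}\,h\cdot f^{-(1-\alpha)/2}\,h_1=\frac{h}{f}\,\frac{h_1}{f}\,f.
\]
The exponents $k=2/(1-\alpha)$ and $k^{\ast}=2/(1+\alpha)$ are H\"older conjugate, so the natural pairing $L^k(M,\lambda)\times L^{k^{\ast}}(M,\lambda)\to\mathbb R$ of $d\rho^{(\alpha)}_\mu(\tau)$ with $d\rho^{(-\alpha)}_\mu(\tau_1)$ is the integral of this product over $(M,\lambda)$, which converges trivially since everything is continuous on the compact $M$. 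By \eqref{35} its value is
\[
\langle d\rho^{(\alpha)}_\mu(\tau),\,d\rho^{(-\alpha)}_\mu(\tau_1)\rangle=\int_{x\in M}\frac{h(x)}{f(x)}\,\frac{h_1(x)}{f(x)}\,f(x)\,d\lambda(x)=G_\mu(\tau,\tau_1),
\]
which is \eqref{alphamap}.

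The computations are routine; the one point deserving care is the claim in Step 1 that $\rho^{(\beta)}$ is genuinely differentiable at $\mu$ with the asserted differential — that the pointwise derivative is the limit of the difference quotients in the $L^{2/(1-\beta)}$-norm, not merely pointwise. This is exactly where the compactness of $M$ and the continuity and strict positivity of the density enter: they confine $f$ to a compact subinterval of $(0,\infty)$, making the relevant scalar function have bounded derivative there, so that the difference quotients converge uniformly and hence in the required $L^k$-norm. I expect this to be the only real obstacle; once it is in place, \eqref{alphamap} follows by merely adding exponents.
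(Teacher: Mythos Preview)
Your proof is correct. The paper states this lemma without proof, presumably because it is the routine computation you carry out: differentiate $\rho^{(\beta)}$ along an affine curve to get $d\rho^{(\beta)}_\mu(\tau)=f^{-(1+\beta)/2}h$, then observe that the exponents for $\beta=\alpha$ and $\beta=-\alpha$ add to $-1$ so that the pairing reproduces the integrand in \eqref{35}. Your attention to the $L^k$-differentiability (via compactness of $M$ and strict positivity of $f$) is more care than the paper demands, but it is the right justification.
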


\begin{definition}
Define an $\alpha$-connection $\nabla^{(\alpha)}$ for $\alpha\in{\Bbb R}$ by 
\begin{multline}\label{alphaconnection}
G_{\mu}(\nabla^{(\alpha)}_{\tau} \tau_1, \tau_2) \\
:=\left.\int_{x\in M} \, \frac{\partial^2}{\partial t \partial t_1}\, \left\{\rho^{(\alpha)}(\mu(t,t_1,t_2))\right\} \, \frac{\partial}{\partial t_2}\, \left\{\rho^{(-\alpha)}(\mu(t,t_1,t_2))\right\}\right\vert_{t=t_1=t_2=0}\, d \lambda(x).
 \end{multline}
Here $\mu(t,t_1,t_2) := \mu + t\tau + t_1\tau_1 + t_2\tau_2$.
\end{definition}

In fact, the $\alpha$-connection is presented by
$$
\displaystyle{\nabla^{(\alpha)}_{\tau} \tau_1 = - \frac{1+\alpha}{2}\,  \left(\frac{d \tau}{d \mu} \frac{d \tau_1}{d \mu}  - G_{\mu}(\tau,\tau_1) \right) \mu},
$$
from which we can assert that $\nabla^{(\alpha)}$ is torsion-free,  namely $\nabla^{(\alpha)}$ is a symmetric connection  and $\nabla^{(-1)}$ is a zero-connection and also that $\nabla^{(0)}$ is the Levi-Civita connection.
Moreover, from \eqref{alphaconnection}, we obtain the following fact:
\begin{proposition}
$\nabla^{(\alpha)}$ and $\nabla^{(- \alpha)}$ are dual each other, i.e.,
fulfill
$$
\tau\ G(\tau_1,\tau_2) = G_{\mu}(\nabla^{(\alpha)}_{\tau} \tau_1, \tau_2 ) + G_{\mu}(\tau_1, \nabla^{(-\alpha)}_{\tau} \tau_2 ).
$$
Moreover, the Riemannian curvature tensor $R^{(\alpha)}$ of $\nabla^{(\alpha)}$ is expressed by
\begin{equation}\label{riemanncurvature}
R_{\mu}^{(\alpha)}(\tau_1,\tau_2)\tau =\ \frac{1 - \alpha^2}{4}\ \left\{G_{\mu}(\tau,\tau_2)\tau_1 - G_{\mu}(\tau,\tau_1)\tau_2 \right\}.
\end{equation}
\end{proposition}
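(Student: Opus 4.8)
The plan is to reduce both claims to one totally symmetric trilinear form evaluated on constant vector fields. Regard $\tau,\tau_1,\tau_2\in\mathcal V$ as constant vector fields on $\mathcal P^+(M)$; these realise every tangent vector (the evaluation $\mathcal V\to T_\mu\mathcal P^+(M)$ is the identity) and they commute, so $[\tau_1,\tau_2]=0$, which is exactly the feature of Friedrich's calculus that makes the computation feasible. Introduce the cubic form
$$
T_\mu(\tau,\tau_1,\tau_2):=\int_{x\in M}\frac{d\tau}{d\mu}(x)\,\frac{d\tau_1}{d\mu}(x)\,\frac{d\tau_2}{d\mu}(x)\,d\mu(x),
$$
which is symmetric in its three arguments. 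Two one-line computations will be needed: differentiating $\mu\mapsto G_\mu(\tau_1,\tau_2)$ along the straight line $t\mapsto\mu+t\tau$ gives $\tau\,G(\tau_1,\tau_2)=-T_\mu(\tau,\tau_1,\tau_2)$; and the explicit formula for $\nabla^{(\alpha)}$ recalled above, together with $\int_M(d\tau_2/d\mu)\,d\mu=\int_M d\tau_2=0$, gives $G_\mu(\nabla^{(\alpha)}_\tau\tau_1,\tau_2)=-\tfrac{1+\alpha}{2}\,T_\mu(\tau,\tau_1,\tau_2)$.

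Duality then follows immediately: by the second identity (applied for $\alpha$ and for $-\alpha$) the right-hand side of the dualistic relation equals $-\bigl(\tfrac{1+\alpha}{2}+\tfrac{1-\alpha}{2}\bigr)T_\mu(\tau,\tau_1,\tau_2)=-T_\mu(\tau,\tau_1,\tau_2)$, which is $\tau\,G(\tau_1,\tau_2)$ by the first identity; and since the combination $\tau\,G(\tau_1,\tau_2)-G_\mu(\nabla^{(\alpha)}_\tau\tau_1,\tau_2)-G_\mu(\tau_1,\nabla^{(-\alpha)}_\tau\tau_2)$ is tensorial, checking it on constant vector fields suffices. One could equally differentiate \eqref{alphamap}, written as $G_\mu(\tau_1,\tau_2)=\int_M(d\rho^{(\alpha)}_\mu\tau_1)(d\rho^{(-\alpha)}_\mu\tau_2)\,d\lambda$, along $\tau$ and recognise the two Leibniz terms as the right-hand side of \eqref{alphaconnection} for $\alpha$ and for $-\alpha$.

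For the curvature I would write $\nabla^{(\alpha)}=\nabla^{(-1)}+S^{(\alpha)}$, where $S^{(\alpha)}(\tau,\tau_1)=-\tfrac{1+\alpha}{2}\bigl(\tfrac{d\tau}{d\mu}\tfrac{d\tau_1}{d\mu}-G_\mu(\tau,\tau_1)\bigr)\mu$ is a $(1,2)$-tensor field and $\nabla^{(-1)}$ is the flat connection whose geodesics are the lines $t\mapsto\mu+t\tau$ (so $R^{(-1)}=0$, and $\nabla^{(-1)}$ acts on $\mathcal V$-valued functions as the ordinary directional derivative). By the standard formula for the curvature of a connection modified by a difference tensor, using $R^{(-1)}=0$ and $[\tau_1,\tau_2]=0$,
\begin{multline*}
R^{(\alpha)}_\mu(\tau_1,\tau_2)\tau=(\nabla^{(-1)}_{\tau_1}S^{(\alpha)})(\tau_2,\tau)-(\nabla^{(-1)}_{\tau_2}S^{(\alpha)})(\tau_1,\tau)\\
+S^{(\alpha)}(\tau_1,S^{(\alpha)}(\tau_2,\tau))-S^{(\alpha)}(\tau_2,S^{(\alpha)}(\tau_1,\tau)).
\end{multline*}
I would compute $\nabla^{(-1)}_{\tau_1}(S^{(\alpha)}(\tau_2,\tau))$ as the derivative of $\mu\mapsto S^{(\alpha)}_\mu(\tau_2,\tau)$ along $\mu+t\tau_1$; this reintroduces $T_\mu$ through the variation of $G_\mu(\tau_2,\tau)$ and, after using $\tfrac{d\tau_1}{d\mu}\mu=\tau_1$, produces terms of three shapes: one proportional to $\tfrac{d\tau_1}{d\mu}\tfrac{d\tau_2}{d\mu}\tfrac{d\tau}{d\mu}\,\mu$, one proportional to $T_\mu(\tau_1,\tau_2,\tau)\,\mu$, and the term $\tfrac{1+\alpha}{2}G_\mu(\tau,\tau_2)\,\tau_1$. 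Expanding the quadratic terms $S^{(\alpha)}(\tau_1,S^{(\alpha)}(\tau_2,\tau))$ and using $\int_M(d\tau_1/d\mu)\,d\mu=0$ gives exactly the same three shapes, now with coefficient $\bigl(\tfrac{1+\alpha}{2}\bigr)^2$. Under the antisymmetrisation $\tau_1\leftrightarrow\tau_2$ the first two shapes are symmetric and cancel, only the $G_\mu$-terms survive, and the coefficient collapses to $\tfrac{1+\alpha}{2}-\bigl(\tfrac{1+\alpha}{2}\bigr)^2=\tfrac{1+\alpha}{2}\cdot\tfrac{1-\alpha}{2}=\tfrac{1-\alpha^2}{4}$, which is \eqref{riemanncurvature}. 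I would then sanity-check $\alpha=0$ against Theorem \ref{Riemanncurvature} and $\alpha=\pm1$ against flatness of $\nabla^{(\pm1)}$.

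The routine parts are the two short differentiations that produce $T_\mu$ and the remark that constant vector fields suffice. The one genuinely delicate step is the curvature computation: one must differentiate the difference tensor $S^{(\alpha)}$ with the flat connection $\nabla^{(-1)}$ — hence also $G_\mu$, which brings $T_\mu$ back in — organise the resulting expression into the three term-shapes above, and verify that the two symmetric ones drop out under $\tau_1\leftrightarrow\tau_2$. That bookkeeping, not any conceptual difficulty, is where an error would most likely creep in.
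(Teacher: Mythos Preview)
Your argument is correct. The introduction of the totally symmetric cubic form $T_\mu$ and the two identities $\tau\,G(\tau_1,\tau_2)=-T_\mu(\tau,\tau_1,\tau_2)$ and $G_\mu(\nabla^{(\alpha)}_\tau\tau_1,\tau_2)=-\tfrac{1+\alpha}{2}T_\mu(\tau,\tau_1,\tau_2)$ dispatch the duality statement cleanly, and your curvature computation via $\nabla^{(\alpha)}=\nabla^{(-1)}+S^{(\alpha)}$ with $R^{(-1)}=0$ is accurate; I checked the bookkeeping and the coefficient indeed collapses to $\tfrac{1+\alpha}{2}-\bigl(\tfrac{1+\alpha}{2}\bigr)^2=\tfrac{1-\alpha^2}{4}$.

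As for comparison: the paper does not actually supply a proof of this proposition. It only remarks that the result follows from the defining formula \eqref{alphaconnection} and the explicit expression $\nabla^{(\alpha)}_\tau\tau_1=-\tfrac{1+\alpha}{2}\bigl(\tfrac{d\tau}{d\mu}\tfrac{d\tau_1}{d\mu}-G_\mu(\tau,\tau_1)\bigr)\mu$, and then passes directly to the corollary on flatness. Your write-up therefore provides considerably more detail than the paper itself; the decomposition relative to the flat $m$-connection is a natural and efficient choice that the paper leaves implicit.
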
 

Hence, from \eqref{riemanncurvature}, we have the following.
\begin{cor}\label{flatdual}
$\alpha$-connections on $\mathcal{P}^+(M)$ which are flat and dual each other are only the $e$-connection $\nabla^{(+1)}$ and the $m$-connection $\nabla^{(-1)}$.
\end{cor}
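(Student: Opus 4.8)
The plan is to read off everything from the curvature formula \eqref{riemanncurvature} together with the duality already recorded in the Proposition immediately preceding the Corollary. First I would recall that a torsion-free affine connection is flat precisely when its Riemannian curvature tensor vanishes identically, so the problem reduces to determining for which $\alpha\in\mathbb{R}$ one has $R^{(\alpha)}_\mu\equiv 0$ on $\mathcal{P}^+(M)$.

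Next I would check that the algebraic expression $G_\mu(\tau,\tau_2)\tau_1 - G_\mu(\tau,\tau_1)\tau_2$ occurring on the right-hand side of \eqref{riemanncurvature} is not identically zero as a tensor field. Since $\mathcal{V}=T_\mu\mathcal{P}^+(M)$ is infinite dimensional, in particular at least two dimensional, I may pick $\tau_1,\tau_2\in T_\mu\mathcal{P}^+(M)$ with $G_\mu(\tau_1,\tau_1)=1$, $G_\mu(\tau_1,\tau_2)=0$ and $\tau_2\neq 0$; setting $\tau=\tau_1$ yields $G_\mu(\tau,\tau_2)\tau_1 - G_\mu(\tau,\tau_1)\tau_2 = -\tau_2\neq 0$. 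Consequently $R^{(\alpha)}_\mu\equiv 0$ forces the scalar factor $\tfrac{1-\alpha^2}{4}$ to vanish, i.e.\ $\alpha^2=1$, so $\alpha=+1$ or $\alpha=-1$. Conversely, when $\alpha=\pm 1$ the factor $1-\alpha^2$ vanishes and \eqref{riemanncurvature} gives $R^{(\pm 1)}\equiv 0$, so $\nabla^{(+1)}$ and $\nabla^{(-1)}$ are genuinely flat; this is exactly the $e$-connection and the $m$-connection.

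Finally I would invoke the Proposition, which asserts that $\nabla^{(\alpha)}$ and $\nabla^{(-\alpha)}$ are mutually dual for every $\alpha$; in particular $\nabla^{(+1)}$ and $\nabla^{(-1)}$ form a dual pair. Combining this with the previous paragraph, the only $\alpha$-connections on $\mathcal{P}^+(M)$ that are simultaneously flat and dual to one another are $\nabla^{(+1)}$ and $\nabla^{(-1)}$, which is the assertion of the Corollary. I do not expect a real obstacle here: the argument is essentially the vanishing of a single scalar coefficient, and the only point needing a moment's care is the non-degeneracy step, namely verifying that the curvature expression is nonzero as a tensor field, which relies solely on the tangent spaces having dimension at least two.
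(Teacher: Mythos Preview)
Your argument is correct and is exactly the one the paper intends: the Corollary is stated as an immediate consequence of the curvature formula \eqref{riemanncurvature}, and you have simply spelled out the vanishing-of-the-scalar-coefficient step and the duality reference that the paper leaves implicit. There is nothing to add.
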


As shown in Theorem \ref{geodesic}, geodesics on $\mathcal{P}^+(M)$ with respect to the Levi-Civita connection admit the expression formula.
It is interesting to derive such a formula for a geodesic   $\mu(t) = f(t) \lambda,\, f(t) := f(x, t)$, $x\in M$ with respect to the $\alpha$-connection.
We follow the argument of the proof of Theorem \ref{geodesic} to obtain the following equation for a geodesic with respect to the $\alpha$-connection:
\begin{equation}\label{eq_a-geo}
\frac{\partial}{\partial t}\ \left(\frac{{\dot f}(t)}{f(t)}\right) + \frac{1-\alpha}{2} \left(\frac{{\dot f}(t)}{f(t)}\right)^2 + \frac{1+\alpha}{2} \int_{x\in M} \left(\frac{{\dot f}(t)}{f(t)}\right)^2 f(t) d\lambda = 0, 
\end{equation}
$$
\int_{x\in M} {\dot f}(t) d\lambda = 0,\qquad
{\dot f}(t) := \frac{\partial f}{\partial t}(x,t).
$$
        
\begin{proposition}     
Geodesics with respect to the $m$-connection $\nabla^{(-1)}$ coincide with affine lines defined by  $t \mapsto \mu + t\,\tau$ in $\mathcal{P}^+(M)$.
In fact, the density function given by $f(x, t) = f(x) + t\,h(x)$, $x\in M$ is a solution to \eqref{eq_a-geo}, provided $\alpha=-1$.
\end{proposition}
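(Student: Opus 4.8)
The plan is to specialize the geodesic equation \eqref{eq_a-geo} to $\alpha = -1$ and observe that it collapses to a pointwise second-order linear ODE in $t$. Putting $\alpha = -1$ makes the coefficient $\frac{1-\alpha}{2}$ equal to $1$ and the coefficient $\frac{1+\alpha}{2}$ equal to $0$, so the integral term disappears and \eqref{eq_a-geo} becomes, for each fixed $x \in M$,
\[
\frac{\partial}{\partial t}\left(\frac{{\dot f}(t)}{f(t)}\right) + \left(\frac{{\dot f}(t)}{f(t)}\right)^2 = 0 .
\]
Since $\frac{\partial}{\partial t}\left(\frac{{\dot f}}{f}\right) = \frac{{\ddot f}}{f} - \left(\frac{{\dot f}}{f}\right)^2$, the left-hand side equals exactly ${\ddot f}(x,t)/f(x,t)$, and because $f(x,t) > 0$ throughout $\mathcal{P}^+(M)$, the geodesic equation is equivalent to ${\ddot f}(x,t) = 0$ for every $x$. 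Its solutions are therefore precisely the functions affine in $t$, namely $f(x,t) = f(x) + t\,h(x)$; this is the first and main step, and it settles both inclusions at once.

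Next I would check the accompanying constraints. The side condition $\int_{x\in M} {\dot f}(t)\,d\lambda = 0$ in \eqref{eq_a-geo} forces $\int_{x\in M} h(x)\,d\lambda(x) = 0$, i.e. $\tau := h\,\lambda \in \mathcal{V} = T_{\mu}\mathcal{P}^+(M)$, while the normalization $\int_{x\in M} f(x,t)\,d\lambda = 1$ is then automatically preserved in $t$. Positivity of $f(x) + t\,h(x)$, hence membership of $\mu + t\tau$ in $\mathcal{P}^+(M)$, holds for $|t|$ small by the earlier remark on $\mathcal{V}$ (the $C^0$-norm $\sup_{x\in M}|(d(t\tau)/d\mu)(x)|$ being small). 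Thus the curve $t \mapsto \mu + t\tau$, whose density is $f + t\,h$, is exactly the $\nabla^{(-1)}$-geodesic with $\mu(0) = \mu$, $\dot\mu(0) = \tau$, and conversely every geodesic segment of the $m$-connection has this form.

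As an independent cross-check one can argue from the explicit formula $\nabla^{(\alpha)}_{\tau}\tau_1 = -\frac{1+\alpha}{2}\left(\frac{d\tau}{d\mu}\frac{d\tau_1}{d\mu} - G_{\mu}(\tau,\tau_1)\right)\mu$: at $\alpha = -1$ this vanishes identically, so $\nabla^{(-1)}$ is the flat (zero) connection induced by the ambient affine structure modeled on $\mathcal{V}$, whose geodesics are the affine lines of that structure intersected with $\mathcal{P}^+(M)$. There is essentially no hard step in the argument; the only point that deserves a little care is the last one, namely verifying that $\mu + t\tau$ really remains inside $\mathcal{P}^+(M)$ on an interval of parameters — which is exactly why the statement concerns geodesic segments rather than complete geodesics, and why $(\mathcal{P}^+(M),\nabla^{(-1)})$ is not complete.
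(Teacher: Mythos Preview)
Your argument is correct and follows the same route the paper indicates: the paper's justification is simply the sentence ``the density function $f(x,t)=f(x)+t\,h(x)$ is a solution to \eqref{eq_a-geo}, provided $\alpha=-1$,'' and you carry this out explicitly by reducing \eqref{eq_a-geo} at $\alpha=-1$ to $\ddot f/f=0$. Your reduction in fact yields slightly more than the paper states, since it shows that \emph{every} solution is affine in $t$, not merely that affine functions solve the equation; the cross-check via the vanishing of $\nabla^{(-1)}$ on constant vector fields is also consistent with the paper's remark that $\nabla^{(-1)}$ is a zero-connection.
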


\begin{proposition}Let $\mu(t)$ be a geodesic with respect to the $e$-connection $\nabla^{(+1)}$ with initial conditions $\mu(0) = f(x) \lambda$, $\dot{\mu}(0) = h(x) \lambda$. Then $\mu(t)$ is represented in the form
$$
\mu(t) = \exp\left\{\int_0^t\left(\int_0^s g_0(u)du\right) ds + t\,h(x)\right\} \mu(0).
$$
Here $g_0(t)$ is a function of $t$ given by $g_0(t) = - G_{\mu(t)}(\dot{\mu}(t), \dot{\mu}(t))$.
In particular, if $\mu(0)=\mu, \mu(\ell)=\mu_1$, $\ell > 0$, then $\mu(t)$ admits the following form
$$
\mu(t)=\left\{\int_M \left(\frac{d\mu_1}{d\mu}\right)^{t/\ell}d\mu\right\}^{-1}\left(\frac{d\mu_1}{d\mu}\right)^{t/\ell}\mu
$$
(see \cite{IS-8}).
\end{proposition}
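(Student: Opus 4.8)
The plan is to specialize the $\alpha$-geodesic equation \eqref{eq_a-geo} to the case $\alpha = +1$ and integrate it explicitly. Putting $\alpha=1$ makes the $\tfrac{1-\alpha}{2}$-term vanish and turns $\tfrac{1+\alpha}{2}$ into $1$, so \eqref{eq_a-geo} reduces to
\[
\frac{\partial}{\partial t}\left(\frac{\dot f(t)}{f(t)}\right) + \int_{x\in M}\left(\frac{\dot f(t)}{f(t)}\right)^2 f(t)\,d\lambda = 0 .
\]
The key observation is that $\dot f(t)/f(t) = \partial_t\log f(x,t)$, so the first term is $\partial_t^2\log f(x,t)$; and since $d\dot\mu(t)/d\mu(t) = \dot f(t)/f(t)$, the integral is precisely $G_{\mu(t)}(\dot\mu(t),\dot\mu(t))$. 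Hence the geodesic equation collapses to
\[
\frac{\partial^2}{\partial t^2}\log f(x,t) = -\,G_{\mu(t)}(\dot\mu(t),\dot\mu(t)) = g_0(t),
\]
a function of $t$ alone, with no dependence on $x$. This is essentially the whole analytic content; everything else is integration and bookkeeping.

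Next I would integrate this twice in $t$ with $x$ held as a parameter. Setting $u(x,t) := \log\bigl(d\mu(t)/d\mu(0)\bigr) = \log f(x,t) - \log f(x,0)$, we have $u(x,0)=0$ and $\partial_t u(x,0) = \dot f(x,0)/f(x)$, which is the Radon-Nikodym derivative of $\dot\mu(0)$ with respect to $\mu(0)$ --- the datum recorded by the initial velocity, written $h(x)$ in the statement. Two integrations then give
\[
u(x,t) = t\,h(x) + \int_0^t\!\left(\int_0^s g_0(u)\,du\right)ds ,
\]
and exponentiating, $\mu(t) = \exp\{u(x,t)\}\,\mu(0)$, as claimed. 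As a consistency remark one should note that the second constraint in \eqref{eq_a-geo}, namely $\int_M\dot f(t)\,d\lambda = 0$, together with $\mu(0)\in\mathcal{P}^+(M)$, forces $\int_M d\mu(t)=1$ for all $t$, so the resulting (log-affine) curve genuinely stays in $\mathcal{P}^+(M)$ while its density is positive, and $g_0$ is then recovered consistently as $-G_{\mu(t)}(\dot\mu(t),\dot\mu(t))$.

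Finally, for the two-point version I would use that $u(x,t)$ is affine in $t$ up to a function of $t$ only: write $u(x,t) = t\,A(x) + P(t)$ with $A(x)=\partial_t u(x,0)$ and $P(t) = \int_0^t(\int_0^s g_0(u)\,du)\,ds$, so $P(0)=P'(0)=0$. The boundary condition $\mu(\ell)=\mu_1$ reads $u(x,\ell) = \log(d\mu_1/d\mu)$, which pins down $A(x) = \tfrac1\ell\bigl(\log(d\mu_1/d\mu) - P(\ell)\bigr)$; substituting back,
\[
u(x,t) = \frac{t}{\ell}\,\log\frac{d\mu_1}{d\mu} + \Bigl(P(t) - \frac{t}{\ell}\,P(\ell)\Bigr),
\]
where the bracketed term depends only on $t$. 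Exponentiating gives $\mu(t) = Z(t)^{-1}\,(d\mu_1/d\mu)^{t/\ell}\,\mu$ for some $t$-dependent constant $Z(t)$, and imposing that $\mu(t)$ be a probability measure forces $Z(t) = \int_M (d\mu_1/d\mu)^{t/\ell}\,d\mu$, which is the stated formula. I expect the only delicate points to be formal ones: correctly identifying the $t=0$ logarithmic derivative with the prescribed initial vector, and carrying the normalization constant $Z(t)$ through the last step. The geometric substance --- that $e$-geodesics are one-parameter exponential families --- is already contained in the reduction $\partial_t^2\log f = g_0(t)$ above, so no heavy computation remains.
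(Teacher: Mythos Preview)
Your argument is correct and is exactly the natural one the paper's setup invites: the paper itself does not supply a proof here, deferring instead to \cite{IS-8}, but your route---specialize \eqref{eq_a-geo} to $\alpha=+1$, recognize the resulting equation as $\partial_t^2\log f(x,t)=g_0(t)$ with right-hand side independent of $x$, integrate twice, and then for the two-point form normalize the resulting exponential family---is precisely what one would expect that reference to contain.

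One notational point you already brushed against deserves to be made explicit. The proposition records the initial velocity as $\dot\mu(0)=h(x)\lambda$, so literally $h(x)$ denotes the density of $\dot\mu(0)$ with respect to $\lambda$; but the quantity that your integration produces in the exponent at first order in $t$ is $\partial_t\log f(x,0)=\dot f(x,0)/f(x)=h(x)/f(x)$, the Radon--Nikodym derivative of $\dot\mu(0)$ with respect to $\mu(0)$. Your interpretation of the symbol ``$h(x)$'' in the displayed formula as the latter is the only one under which the formula is correct, so rather than silently re-reading the notation you should flag this as a minor abuse (or typo) in the statement. Apart from that, nothing is missing.
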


\section{Riemannian manifolds of non-positive curvature}\label{4}

In this section, we will discuss information geometry of barycenters together with the barycenter map which is another main subject of this article.
Let $X$ be a Hadamard manifold and $\partial X$ its ideal boundary.
The barycenter map $\mathrm{bar}$ is a map from $\mathcal{P}^+(\partial X)$ to $X$,
defined for $\mu\in\mathcal{P}^+(\partial X)$ by assigning to $\mu$ a critical point of $\mu$-average of the normalized Busemann function $B_{\theta} : X \rightarrow \mathbb{R}$. Thus, one has ${\mathrm{bar}}: \mathcal{P}^+(\partial X) \rightarrow X$.
Before giving the notion of barycenter and the barycenter map,
we briefly explain Hadamard manifolds, their ideal boundary and the Busemann function.

Let $(X,g)$ be a Hadamard manifold, i.e., a complete, simply connected Riemannian manifold of non-positive sectional curvature.
In what follows, we assume that $(X,g)$, simply $X$, is an $n$-dimensional Hadamard manifold.

\begin{remark}
Euclidean spaces, real hyperbolic spaces, rank one symmetric spaces of non-compact type and Damek-Ricci spaces are examples of Hadamard manifolds.
\end{remark}

Now we summarize geometric properties of Hadamard manifolds (\cite{Sa}):
\begin{enumerate}
\item there exists a unique shortest geodesic joining any two given points.
\item the distance function $x \mapsto d(x,x_0)$, $x_0\in X$ is a convex function on $X$.
Here a function $f:X\to\mathbb{R}$ is said to be convex on $X$ when for any geodesic $\gamma$ the function $t\mapsto f(\gamma(t))$ is convex.
\end{enumerate}

Now we will give definition of the ideal boundary of a Hadamard manifold.
Any geodesic on $X$ is assumed to be parametrized by arc-length.

\begin{definition}
Let $\gamma, \gamma_1: \mathbb{R} \to X$ be two geodesics on $X$.
When there exists a constant $C>0$ such that
\begin{equation}\label{96}
d(\gamma(t), \gamma_1(t))<C\quad (\forall\,t\geq 0),
\end{equation}
we say that $\gamma$ and $\gamma_1$ are asymptotically equivalent and write as $\gamma\sim\gamma_1$.
\end{definition}

The relation ``$\sim$'' gives rise to an equivalence relation on the space $\mathrm{Geo}(X)$ of all geodesic rays $\gamma : [0,\infty)\rightarrow X$.
We call the quotient space $\mathrm{Geo}(X)/\sim$ the ideal boundary of $X$, denoted by  $\partial X$.
An equivalence class represented by $\gamma\in\mathrm{Geo}(X)$ is called an asymptotic class and denoted by
$[\gamma]$ or $\gamma(\infty)$.

We denote by $S_xX$ the set of all unit tangent vectors of $X$ at $x$ and define a map $\beta_x : S_xX \rightarrow \partial X$ by $\beta_x(v):=[\gamma]$ ($\gamma(t) := \exp_x t v$).
Here $\exp_x$ is the exponential map of $X$ at $x$.
Since $X$ is of non-positive sectional curvature, $\beta_x$ is bijective.

We can define a topology on $X\cup\partial X$, called the cone topology by setting a fundamental system of neighborhoods (\cite{BGS,  EO}).
The map $\beta_x : S_xX \rightarrow \partial X$ is a homeomorphism with respect to the restriction of the cone topology.

Let $d\theta$ be a probability measure given by the normalized standard volume element of $S_xX\cong S^{n-1}$.
By the aid of the push-forward by the homeomorphism $\beta_x :S_xX\to\partial X$,
$d\theta$ induces a probability measure on $\partial X$.
We denote this probability measure by the same symbol $d\theta$.

\begin{definition}
A Hadamard manifold $X$ is said to satisfy the visibility axiom,
if the following holds(\cite{EO}):
for any $\theta, \theta_1\in\partial X$, $\theta \not= \theta_1$, there exists a geodesic $\gamma : \mathbb{R} \rightarrow X$ satisfying $[\gamma]=\theta$ and $[\gamma^{-1}]=\theta_1$,
where $\gamma^{-1}$ is the geodesic with inverse direction $\gamma^{-1}(t):=\gamma(-t)$.
\end{definition}

\begin{remark}
Rank one symmetric spaces of non-compact type, including real hyperbolic spaces, and Damek-Ricci spaces satisfy the visibility axiom.
\end{remark}

We define  the Busemann function that appears in  defining barycenters.
Let $\gamma: \mathbb{R}\to X$ be a geodesic.
Then, we define a function $b_t:X\to\mathbb{R}$,\ $t \geq 0$ by
$$
b_t(x) := d(x,\gamma(t))-t = d(x,\gamma(t)) - d(\gamma(0),\gamma(t)),
$$
where $d$ is the distance function on $X$.
Since $X$ is a Hadamard manifold,
for any $x\in X$ there exists a limit $\lim\limits_{t\to\infty}b_t(x)$, denoted by $b_{\infty}(x)$.
We call this correspondence $x\mapsto b_{\infty}(x)$ the Busemann function and denote it by $B_{\gamma}$.

A level set of the Busemann function $B_{\gamma}$ passing through $x_0\in X$,
$\{ x\in X\,|\, B_{\gamma}(x) \equiv B_{\gamma}(x_0) \}$ is called a horosphere centered at $\gamma(\infty)$, which is considered as a limit surface of geodesic spheres centered at $\gamma(t)$ with radius $d(\gamma(t),x_0)$, $t\to\infty$ (\cite{ISS}).

\begin{example}\label{busehyperplane}
The real hyperbolic plane $\mathbb{R}{\bf H}^2$ is represented by a unit disk model in the complex plane $X=\mathbb{R}{\bf H}^2=\{z\in\mathbb{C}\,\vert\,\vert z\vert<1\}$ and its ideal boundary by $ \partial X=\{z\in\mathbb{C}\,\vert\,\vert z\vert=1\}$.
Let $\gamma=\gamma(t)$ be a geodesic satisfying $\gamma(0)=0$, $[\gamma]=e^{i\,\varphi}$.
Then, the Busemann function is given by the form $B_{\gamma}(z) = \log \left(\vert z-e^{i{ \varphi}}\vert^2/(1-\vert z\vert^2)\right)$ (see \cite{Hel}).
\end{example}

In what follows, we choose a reference point $x_0\in X$ and fix it.

\begin{definition}For any $\theta\in\partial X$ let $\gamma$ be a geodesic satisfying $\gamma(0)=x_0$ and $[\gamma]=\theta$.
We denote by $B_{\theta}$ the Busemann function $B_{\gamma}$ associated with $\gamma$  and call it the normalized Busemann function with base point $x_0$.
 \end{definition}
 
Basic properties of normalized Busemann functions are summarized as follows:
\begin{enumerate}
\item $B_{\theta}(x_0)=0$ for any $\theta\in\partial X$.
\item $B_{\theta}(\gamma(t))=-t$, where $\gamma$ is a geodesic satisfying $\gamma(0)=x_0$ and $[\gamma]=\theta$.
\item $B_{\theta}$ is Lipschitz continuous.
In fact, $\vert B_{\theta}(x) - B_{\theta}(y)\vert \leq d(x,y)$ for any $x,y\in X$.
\item $B_{\theta}$ is of $C^2$-class (\cite{HI}).
\item $B_{\theta}$ admits the gradient vector field $\nabla B_{\theta}$ of $C^1$-class with unit norm $\displaystyle{\vert \nabla B_{\theta}\vert \equiv 1}$.
\item For any unit tangent vector $u\in S_xX$, there exists a unique $\theta\in\partial X$ such that $u = - (\nabla B_{\theta})_x$ .
\item $B_{\theta}$ is convex.
\item the Hessian $\nabla dB_{\theta}$ is positive semi-definite
\begin{equation}\label{107}
 (\nabla d B_{\theta})_x(u,u) \geq 0\quad(\forall u\in T_xX,\, \forall x\in X).
\end{equation}
Moreover, the Hessian satisfies $\nabla d B_{\theta}(\nabla B_{\theta}, \cdot) = 0$.
\item $\gamma \sim\gamma_1$\, $\Longleftrightarrow$\, $B_{\gamma}(\cdot) - B_{\gamma_1}(\cdot) \equiv {\rm const}$ (\cite{Sa}).
\end{enumerate}

\begin{remark}
From (iv) and (viii) $\Delta B_{\theta}(:=-{\rm trace}\nabla dB_{\theta})\leq 0$ holds for any $\theta\in\partial X$.
\end{remark}

\begin{example}
The Hessian $\nabla dB_{\theta}$ of the real hyperbolic space $\mathbb{R}{\bf H}^n$ is given by the form
\begin{equation}\label{hessianhyperbolicspace}
 (\nabla d B_{\theta})_x(u,v) = \langle u,v\rangle - \langle u, (\nabla B_{\theta})_x\rangle \langle v, (\nabla B_{\theta})_x\rangle,\quad
 u,v\in T_xX,\, x\in X
\end{equation}
(see \cite{BCG}).
 \end{example}

Let $S = S_{\theta,x}$ be the shape operator of a horosphere centered at $\theta\in\partial X$ which passes through $x\in X$.
Since the gradient vector field $\nabla B_{\theta}$ is a unit normal vector field of the horosphere, we have $(\nabla dB_{\theta})_x(u,v)= - \langle S_{\theta,x}(u),v\rangle$, where $u, v$ are vectors tangent to the horosphere (see \cite{I}).

Let $\gamma$ be a geodesic with $\theta=[\gamma]$ and let $H_t= H_{\gamma(t),\theta}$ be the horosphere centered at $\theta$ passing through $\gamma(t)$.
We find that the shape operator $S_t : T_{\gamma(t)} H_t \rightarrow T_{\gamma(t)} H_t$ of $H_t$ at $\gamma(t)$ satisfies the Riccati equation $S'_t + S_t^2 + R_t = O$ along $\gamma(t), - \infty < t <\infty$ (\cite{IS0}), where $R_t$ is the Jacobi operator defined by the Riemannian curvature tensor.

Let $\phi : X\to X$ be an isometry of a Hadamard manifold $X$.
Then, $\phi$ induces a transformation $\widehat{\phi}$ of the ideal boundary $\partial X$ as follows:
\begin{equation}\label{homeo}
\widehat{\phi}(\theta):=[\phi\circ\gamma]\quad(\forall\theta\in\partial X),
\end{equation}
where $\gamma$ is a geodesic satisfying $\gamma(0)=x_0$, and $[\gamma]=\theta$.
Then, we obtain the following.
\begin{theorem}[Busemann cocycle formula \cite{GJT}]
For any $x\in X$
\begin{equation}\label{cocycle}
B_{\theta}(\phi(x))=B_{\widehat{\phi}^{-1}(\theta)}(x)+B_{\theta}(\phi(x_0)),
\end{equation}
where $\phi^{-1}:X\to X$ is the inverse map of $\phi$.
\end{theorem}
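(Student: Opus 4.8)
The plan is to compute both sides using the defining limit of the Busemann function and the fact that $\phi$ is an isometry. First I would fix $\theta\in\partial X$ and choose the normalizing geodesic $\gamma$ with $\gamma(0)=x_0$ and $[\gamma]=\theta$, so that by definition $B_\theta = B_\gamma$, i.e.
\begin{equation*}
B_\theta(y) = \lim_{t\to\infty}\bigl(d(y,\gamma(t)) - t\bigr),\qquad t = d(\gamma(0),\gamma(t)).
\end{equation*}
Applying this at $y=\phi(x)$ and using that $\phi$ is an isometry, $d(\phi(x),\gamma(t)) = d(x,\phi^{-1}(\gamma(t)))$. The curve $t\mapsto \phi^{-1}(\gamma(t))$ is again a unit-speed geodesic; call it $\gamma_1$. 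Its asymptotic class is $[\gamma_1] = [\phi^{-1}\circ\gamma] = \widehat{\phi^{-1}}(\theta) = \widehat{\phi}^{-1}(\theta)$, using the definition \eqref{homeo} of the boundary action and the fact that $\widehat{(\cdot)}$ is a group homomorphism (so $\widehat{\phi^{-1}} = \widehat{\phi}^{-1}$). Thus
\begin{equation*}
B_\theta(\phi(x)) = \lim_{t\to\infty}\bigl(d(x,\gamma_1(t)) - t\bigr).
\end{equation*}

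The geodesic $\gamma_1$ satisfies $\gamma_1(0) = \phi^{-1}(x_0)$, which is generally not the base point $x_0$, so $B_{\gamma_1}$ is not quite the normalized Busemann function $B_{\widehat{\phi}^{-1}(\theta)}$. The key step is to correct for this: property (ix) of normalized Busemann functions says that two asymptotic geodesics give Busemann functions differing by a constant, and more precisely the standard cocycle relation gives, for geodesics $\gamma_1,\gamma_2$ with $[\gamma_1]=[\gamma_2]$,
\begin{equation*}
B_{\gamma_1}(\cdot) = B_{\gamma_2}(\cdot) + B_{\gamma_1}(\gamma_2(0)).
\end{equation*}
Taking $\gamma_2$ to be the normalizing geodesic for $\widehat{\phi}^{-1}(\theta)$ (so $\gamma_2(0)=x_0$ and $B_{\gamma_2} = B_{\widehat{\phi}^{-1}(\theta)}$), I get
\begin{equation*}
B_\theta(\phi(x)) = B_{\gamma_1}(x) = B_{\widehat{\phi}^{-1}(\theta)}(x) + B_{\gamma_1}(x_0).
\end{equation*}
Finally, evaluating the original identity at $x=x_0$ and using $B_{\widehat{\phi}^{-1}(\theta)}(x_0)=0$ (property (i)) identifies the constant: $B_{\gamma_1}(x_0) = B_\theta(\phi(x_0))$. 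Substituting back yields \eqref{cocycle}.

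The main obstacle is the bookkeeping around the shifted base point — i.e., carefully justifying the additive constant relating $B_{\gamma_1}$ (based at $\phi^{-1}(x_0)$) to the normalized function $B_{\widehat{\phi}^{-1}(\theta)}$ — and confirming that the boundary action $\widehat{(\cdot)}$ is functorial so that $\widehat{\phi^{-1}}=\widehat{\phi}^{-1}$. Once the base-point shift is tracked correctly, the computation collapses to evaluating the identity at $x=x_0$ to pin down the constant, which is routine; the limit manipulations themselves are immediate from the isometry invariance of $d$ and the continuity of the Busemann limit.
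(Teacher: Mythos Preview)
The paper does not actually prove this theorem; it simply states the Busemann cocycle formula with a citation to \cite{GJT} and moves on. There is therefore nothing to compare your argument against.

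That said, your proof is correct and is the standard one. The steps are all sound: pulling the isometry through the distance in the defining limit, recognizing $\phi^{-1}\circ\gamma$ as a geodesic asymptotic to the normalizing geodesic for $\widehat{\phi}^{-1}(\theta)$, invoking property (ix) to conclude the two Busemann functions differ by a constant, and then identifying that constant by evaluating at $x=x_0$. The only places that require a moment's care --- the functoriality $\widehat{\phi^{-1}}=\widehat{\phi}^{-1}$ and the base-point shift --- you have flagged yourself, and both are routine. One small remark: in the paper's definition \eqref{homeo} of $\widehat{\phi}$ the representing geodesic is required to start at $x_0$, but since the asymptotic class of $\phi\circ\gamma$ depends only on $[\gamma]$ and not on $\gamma(0)$, the map $\phi\mapsto\widehat{\phi}$ is indeed a homomorphism, so your use of $\widehat{\phi^{-1}}=\widehat{\phi}^{-1}$ is justified.
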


\section{Probability measures on the ideal boundary and their barycenter}\label{barycenter}

In this section and the next section, we assume the following.
\begin{hypo}\label{hypo}
$X$ satisfies the visibility axiom and  the normalized Busemann function $\theta\mapsto B_{\theta}(x)$ is continuous as a function on $\partial X$ for any  fixed $x\in X$. 
\end{hypo}

The real hyperbolic plane satisfies Hypothesis \ref{hypo} (see Example \ref{busehyperplane}).
Damek-Ricci spaces also satisfy this hypothesis(see \cite{IS1}).

With respect to this hypothesis we have the following.
\begin{theorem}[\cite{BGS}]\label{visibility}
A Hadamard manifold $X$ satisfies the visibility axiom if and only if for any $\theta\in\partial X$ and any geodesic $\gamma$ satisfying $[\gamma]\not=\theta$ it holds $\lim_{t\to\infty}B_{\theta}(\gamma(t)) =\infty$.
\end{theorem}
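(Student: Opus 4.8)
The plan is to prove the two implications separately, using throughout properties (i)--(ix) of the normalized Busemann function and the definition $\partial X=\mathrm{Geo}(X)/\!\sim$.

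\emph{Visibility $\Rightarrow$ blow-up.} Fix $\theta\in\partial X$ and a geodesic $\gamma$ with $\eta:=[\gamma]\neq\theta$. Applying the visibility axiom to the distinct ideal points $\theta,\eta$, we obtain a geodesic $\delta:\mathbb{R}\to X$ with $[\delta]=\theta$ and $[\delta^{-1}]=\eta$. The rays $\gamma$ and $s\mapsto\delta(-s)$ represent the same ideal point $\eta$, hence are asymptotically equivalent, so by \eqref{96} there is $C>0$ with $d(\gamma(t),\delta(-t))<C$ for all $t\geq0$. Since $\delta|_{[0,\infty)}$ is a ray with $[\delta|_{[0,\infty)}]=\theta$, property (ix) gives $B_\theta=B_\delta+c$ for a constant $c$, where $B_\delta$ is the Busemann function of $\delta$; and $B_\delta(\delta(s))=-s$ because geodesics of a Hadamard manifold are globally minimizing, so $B_\theta(\delta(-t))=t+c\to\infty$. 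Combining with the Lipschitz bound (iii), $B_\theta(\gamma(t))\geq B_\theta(\delta(-t))-d(\gamma(t),\delta(-t))>t+c-C\to\infty$.

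\emph{Blow-up $\Rightarrow$ visibility.} Fix $\theta_1\neq\theta_2$ in $\partial X$ and put $\phi:=\max(B_{\theta_1},B_{\theta_2})$, a convex function with $\phi(x_0)=0$. The key step is to show $\phi$ is proper. If not, there are $c\in\mathbb{R}$ and points $y_j$ with $\phi(y_j)\leq c$ and $d(x_0,y_j)\to\infty$; letting $v_j\in S_{x_0}X$ be the direction from $x_0$ to $y_j$ and passing to a subsequence with $v_j\to v$, the geodesics $\bar\gamma_j(t):=\exp_{x_0}(tv_j)$ converge uniformly on compact subsets to $\gamma_v(t):=\exp_{x_0}(tv)$, so $y_j=\bar\gamma_j(d(x_0,y_j))\to\xi:=\gamma_v(\infty)$ in the cone topology. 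Since $\theta_1\neq\theta_2$ we may assume $\xi\neq\theta_1$. Given $A>1$, the hypothesis applied to $\gamma_v$ yields $T$ with $B_{\theta_1}(\gamma_v(T))>A$; continuity of $\exp_{x_0}$ and of $B_{\theta_1}$ then give $B_{\theta_1}(\bar\gamma_j(T))>A-1$ for large $j$. Because $t\mapsto B_{\theta_1}(\bar\gamma_j(t))$ is convex and vanishes at $t=0$ (property (i)), convexity forces $B_{\theta_1}(\bar\gamma_j(t))\geq (t/T)\,B_{\theta_1}(\bar\gamma_j(T))$ for $t\geq T$; evaluating at $t=d(x_0,y_j)$ gives $B_{\theta_1}(y_j)\geq A-1$ for large $j$. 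As $A$ was arbitrary, $\phi(y_j)\geq B_{\theta_1}(y_j)\to\infty$, contradicting $\phi(y_j)\leq c$. Hence every sublevel set of $\phi$ is bounded and closed, so compact, and $\phi$ attains a global minimum at some $x^{\ast}$ with $\phi(x^{\ast})\leq0$.

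\emph{Recovering the connecting geodesic.} Since $B_{\theta_1},B_{\theta_2}$ are $C^2$ with unit gradients, $x^{\ast}$ cannot lie in the open set where one of them strictly dominates (there $\phi$ is smooth with non-vanishing gradient), so $B_{\theta_1}(x^{\ast})=B_{\theta_2}(x^{\ast})$; then $0$ lies in the subdifferential of the convex function $\phi$ at its minimum $x^{\ast}$, which equals the segment $\{\,t(\nabla B_{\theta_1})_{x^{\ast}}+(1-t)(\nabla B_{\theta_2})_{x^{\ast}}:t\in[0,1]\,\}$. As both gradients are unit vectors, $0$ lying on this segment forces $(\nabla B_{\theta_1})_{x^{\ast}}=-(\nabla B_{\theta_2})_{x^{\ast}}$. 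By property (vi) (the integral curves of $-\nabla B_\theta$ being the geodesics asymptotic to $\theta$), the geodesic $\delta$ with $\delta(0)=x^{\ast}$ and $\dot\delta(0)=-(\nabla B_{\theta_1})_{x^{\ast}}$ has $[\delta]=\theta_1$, while $-\dot\delta(0)=-(\nabla B_{\theta_2})_{x^{\ast}}$ forces $[\delta^{-1}]=\theta_2$; thus $\delta$ joins $\theta_2$ to $\theta_1$ and $X$ satisfies the visibility axiom. The main obstacle is the properness step above: upgrading the hypothesis, which is phrased only along geodesics, to blow-up of $B_\theta$ along an arbitrary sequence tending to an ideal point $\neq\theta$. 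This is precisely where convexity of $B_\theta$ and continuous dependence of geodesics on initial data must be exploited; the final extraction of $\delta$, modulo the subdifferential calculus for a maximum of two smooth convex functions, is then routine.
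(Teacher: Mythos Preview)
The paper does not prove this theorem; it is stated with attribution to \cite{BGS} and then used as a tool in the proof of Theorem~\ref{exist}. So there is no in-paper argument to compare your proposal against.

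Your proof is essentially correct and follows the standard route. Two points deserve tightening. In the final step you invoke property (vi) for the claim that integral curves of $-\nabla B_\theta$ are geodesics asymptotic to $\theta$; but (vi) as stated only says each unit vector equals $-(\nabla B_\theta)_x$ for a unique $\theta$. What you actually need is property~(viii), namely $\nabla dB_\theta(\nabla B_\theta,\cdot)=0$, which yields $\nabla_{\nabla B_\theta}\nabla B_\theta=0$ so that the integral curves are geodesics; along such a geodesic $B_\theta$ decreases linearly, and the blow-up hypothesis applied contrapositively then forces its asymptotic class to be $\theta$. Second, in the forward direction you use $B_\delta(\delta(-t))=t$, which requires $B_\delta(\delta(s))=-s$ for \emph{all} $s\in\mathbb{R}$, not just $s\geq0$; this is true because geodesics in a Hadamard manifold are globally minimizing, but it should be said. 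With these clarifications the argument is complete.
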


\begin{definition}Let $\mu \in {\mathcal P}^+(\partial X)$ be a probability measure on $\partial X$. We define the $\mu$-averaged  Busemann function $\mathbb{B}_{\mu} : X \rightarrow \mathbb{R}$ by
\begin{equation*}
 \mathbb{B}_{\mu}(x) := \int_{\theta\in\partial X}\ B_{\theta}(x)\,d \mu(\theta),\quad
 x\in X.
\end{equation*}
\end{definition}

Under Hypothesis \ref{hypo}, 
it is shown that the averaged Busemann function $\mathbb{B}_{\mu} : X\to\mathbb{R}$ admits a minimum (see Theorem \ref{exist}).
We call this minimal point, i.e., a critical point of $\mathbb{B}_{\mu}$, a barycenter of $\mu$.
Before discussing the existence and uniqueness of the minimum of $\mathbb{B}_{\mu}$,
we state some of the main properties of averaged Busemann function $\mathbb{B}_{\mu}$.

\begin{proposition}\label{averaged}
The averaged Busemann function $\mathbb{B}_{\mu}$ has the following properties:
\begin{enumerate}
\item $\mathbb{B}_{\mu}$ is convex and $\mathbb{B}_{\mu}(x_0)=0$.
\item $\mathbb{B}_{\mu}(\gamma(t))\to\infty$ ($t\to\infty$), where $\gamma: {\Bbb R} \to X$ is an arbitrary geodesic.
\item $\mathbb{B}_{\mu}$ is Lipschitz continuous, i.e., $\vert \mathbb{B}_{\mu}(x) -  \mathbb{B}_{\mu}(y) \vert \leq d(x,y)$ for any $x, y\in X$.
\item For any $\mu\in \mathcal{P}^+(\partial X)$ and any $x\in X$, $\vert(\nabla \mathbb{B}_{\mu})_x\vert \leq 1$ holds.
\item 
The Hessian $\nabla d\mathbb{B}_{\mu}$ is represented by
\begin{eqnarray}\label{hessianaverageBusemann}
(\nabla d\mathbb{B}_{\mu})_x(u,v) = \int_{\theta\in\partial X} (\nabla d B_{\theta})_x(u,v) d \mu(\theta)\hspace{2mm}u,v\in T_xX,\, x\in X
\end{eqnarray}and hence is positive semi-definite, provided that $X$ has bounded Ricci curvature and $d \Delta B_{\theta}$ is uniformly bounded with respect to $\theta$.
\end{enumerate}
\end{proposition}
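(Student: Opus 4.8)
The plan is to establish the five assertions as the $\mu$-averaged versions of the corresponding properties of the individual normalized Busemann functions $B_{\theta}$ collected in \S\ref{4}. First note that $\mathbb{B}_{\mu}$ is well defined and finite: by property (iii) of $B_{\theta}$ one has $|B_{\theta}(x)| = |B_{\theta}(x) - B_{\theta}(x_0)| \le d(x,x_0)$ for every $\theta$, so $\theta \mapsto B_{\theta}(x)$ is bounded, and it is continuous by Hypothesis \ref{hypo}, hence $\mu$-integrable. For (i): $\mathbb{B}_{\mu}(x_0) = \int_{\partial X} B_{\theta}(x_0)\, d\mu(\theta) = 0$ because $B_{\theta}(x_0) \equiv 0$; and for any geodesic $\gamma$ and $s \in [0,1]$, integrating the convexity inequality for $t \mapsto B_{\theta}(\gamma(t))$ (property (vii)) over $\theta$ gives $\mathbb{B}_{\mu}(\gamma((1-s)a+sb)) \le (1-s)\,\mathbb{B}_{\mu}(\gamma(a)) + s\,\mathbb{B}_{\mu}(\gamma(b))$, so $\mathbb{B}_{\mu}$ is convex. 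For (iii): $|\mathbb{B}_{\mu}(x) - \mathbb{B}_{\mu}(y)| \le \int_{\partial X} |B_{\theta}(x) - B_{\theta}(y)|\, d\mu(\theta) \le d(x,y)$, since $\mu$ is a probability measure and each $B_{\theta}$ is $1$-Lipschitz. For (iv): differentiating under the integral sign (legitimate by dominated convergence, as the difference quotients of $B_{\theta}$ along any geodesic are bounded by $1$ via property (iii)) gives $(\nabla \mathbb{B}_{\mu})_x = \int_{\partial X} (\nabla B_{\theta})_x\, d\mu(\theta)$, whence $|(\nabla \mathbb{B}_{\mu})_x| \le \int_{\partial X} |(\nabla B_{\theta})_x|\, d\mu(\theta) = 1$ by property (v).

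For (v), the Hessian formula \eqref{hessianaverageBusemann} is obtained by differentiating twice under the integral sign, and positive semi-definiteness of $\nabla d\mathbb{B}_{\mu}$ is then immediate by integrating the inequality \eqref{107} over $\theta$. The delicate point here — and the reason for the extra hypotheses that $X$ has bounded Ricci curvature and that $d\,\Delta B_{\theta}$ is uniformly bounded in $\theta$ — is the legitimacy of this second differentiation: one needs a bound on $\|\nabla d B_{\theta}\|$ that is uniform in $\theta$ and locally uniform in $x$ to dominate the second difference quotients. Such a bound follows by combining the sign condition $0 \le \nabla d B_{\theta}$ (property (viii)) with the trace bound coming from control of $d\,\Delta B_{\theta}$, and with the Riccati equation $S_t' + S_t^2 + R_t = O$ for the horosphere shape operators together with the Ricci bound.

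The substantive assertion is (ii). The plan is to work along a fixed geodesic $\gamma$, set $\theta_0 := \gamma(\infty)$, and study the derivative of the convex function $t \mapsto \mathbb{B}_{\mu}(\gamma(t))$. As in the computation above, $\tfrac{d}{dt}\mathbb{B}_{\mu}(\gamma(t)) = \int_{\partial X} \tfrac{d}{dt}B_{\theta}(\gamma(t))\, d\mu(\theta)$, where $\tfrac{d}{dt}B_{\theta}(\gamma(t)) = \langle (\nabla B_{\theta})_{\gamma(t)}, \dot\gamma(t)\rangle \in [-1,1]$. For each fixed $\theta$ this quantity is non-decreasing in $t$, since its $t$-derivative is $(\nabla d B_{\theta})(\dot\gamma,\dot\gamma) \ge 0$ (properties (iv), (viii), using that $\gamma$ is a geodesic); hence it has a limit $\lambda_{\theta} := \lim_{t\to\infty}\tfrac{d}{dt}B_{\theta}(\gamma(t)) \in [-1,1]$. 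The key point is that $\lambda_{\theta} > 0$ for every $\theta \ne \theta_0$: by Theorem \ref{visibility} the visibility axiom (part of Hypothesis \ref{hypo}) gives $B_{\theta}(\gamma(t)) \to +\infty$ as $t \to \infty$ whenever $[\gamma] \ne \theta$; but a convex $C^1$ function of $t$ whose derivative is non-decreasing with non-positive limit is itself non-increasing, hence bounded above, contradicting $B_{\theta}(\gamma(t)) \to +\infty$; therefore $\lambda_{\theta} > 0$. Since $\mu$ has positive continuous density, $\mu(\{\theta_0\}) = 0$, so $\lambda_{\theta} > 0$ for $\mu$-almost every $\theta$; as $\theta \mapsto \tfrac{d}{dt}B_{\theta}(\gamma(t))$ is measurable and bounded and increases pointwise to $\lambda_{\theta}$, monotone convergence yields
\[
\lim_{t\to\infty}\frac{d}{dt}\mathbb{B}_{\mu}(\gamma(t)) = \int_{\partial X}\lambda_{\theta}\, d\mu(\theta) =: c_{\mu} > 0.
\]
Since $t \mapsto \tfrac{d}{dt}\mathbb{B}_{\mu}(\gamma(t))$ is itself non-decreasing (convexity of $\mathbb{B}_{\mu}$ along $\gamma$), it eventually stays above $c_{\mu}/2$, so $\mathbb{B}_{\mu}(\gamma(t)) \ge \mathbb{B}_{\mu}(\gamma(t_1)) + \tfrac{c_{\mu}}{2}(t - t_1) \to +\infty$, which is (ii).

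The main obstacle I anticipate lies in (ii) and (v). For (ii) it is conceptual: recognizing that the visibility axiom is, through the convexity of the individual Busemann functions, precisely the statement that the asymptotic slope $\lambda_{\theta}$ is strictly positive off $\theta_0$; once this identification is made, the remainder is routine convergence-under-the-integral bookkeeping, and the positivity of the density of $\mu$ is used only to discard the single point $\theta_0$. For (v) the obstacle is technical: verifying that the stated curvature and Laplacian hypotheses genuinely dominate $\|\nabla d B_{\theta}\|$ uniformly, which is where the Riccati-equation input enters. Parts (i), (iii), (iv) are routine.
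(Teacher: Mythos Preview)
Your argument is correct. Parts (i), (iii), (iv) match the paper's almost verbatim. The interesting comparisons are (ii) and (v).

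For (ii) you take a genuinely different route from the paper. The paper (deferring the proof to that of Theorem~\ref{exist}) works with the decreasing family of sets $J_{\theta_0}(t)=\{\theta:B_\theta(\gamma(t))\le 0\}$, uses visibility to show $\bigcap_t J_{\theta_0}(t)=\{\theta_0\}$ and hence $\mu(J_{\theta_0}(t))\to 0$, and then splits the integral over $J_{\theta_0}(t)$ and a compact set $K$ in its complement to obtain a lower bound of the form $\tfrac{t}{t_1}\bigl(C\mu(K)-D\mu(J_{\theta_0}(t))\bigr)\to+\infty$. Your approach instead differentiates $\mathbb{B}_\mu$ along $\gamma$, identifies the pointwise asymptotic slope $\lambda_\theta=\lim_t \langle\nabla B_\theta,\dot\gamma\rangle$, and uses visibility to force $\lambda_\theta>0$ off $\theta_0$; monotone convergence then gives a positive limiting slope $c_\mu$. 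Both arguments use exactly the same inputs (convexity of each $B_\theta$, visibility via Theorem~\ref{visibility}, and $\mu(\{\theta_0\})=0$); your version is arguably cleaner and yields the linear lower bound directly, while the paper's set-theoretic decomposition is more elementary in that it never differentiates and would adapt to non-smooth settings.

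For (v), your listing of ingredients is a bit loose. The mechanism the paper invokes is Bochner's formula: since $|\nabla B_\theta|\equiv 1$, one has $|\nabla dB_\theta|^2 = -\mathrm{Ric}(\nabla B_\theta,\nabla B_\theta) - \langle\nabla\Delta B_\theta,\nabla B_\theta\rangle$ (up to the sign convention for $\Delta$), which gives the uniform bound immediately from the two hypotheses. Tracing your Riccati equation $S'+S^2+R=O$ along the integral curves of $-\nabla B_\theta$ yields exactly this identity, so your route is equivalent; but the separate ``trace bound from $d\Delta B_\theta$ plus positive semi-definiteness'' you mention is not itself a working step (the hypothesis bounds $d\Delta B_\theta$, not $\Delta B_\theta$). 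Phrase it as Bochner or as the traced Riccati equation and the argument is clean.
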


\begin{remark}\label{remarkaverageBusemann}
If $X$ is a Hadamard manifold of volume entropy $Q\geq 0$  which is asymptotically harmonic, then one has $d\Delta B_{\theta} \equiv 0$ and by using the Riccati equation $- (n-1) Q^2 \leq {\rm Ric}_x \leq 0$, while the detailed argument is omitted.  For the volume entropy and the  asymptotical harmonicity refer to Definition \ref{BusemannPoisson} and  
 Remark \ref{asymptoticallyharmonic}, \S 6, respectively.
\end{remark}

\begin{proof}\ 

(i)\,  Because $\mathbb{B}_{\mu}$ is an average of a convex function, $\mathbb{B}_{\mu}$ is also convex.
It is obvious that $\mathbb{B}_{\mu}(x_0)= 0$.

(ii)\, This will be shown later (see the proof of Theorem \ref{exist}).

(iii)\, This is obvious, since $B_{\theta}$ is Lipschitz continuous and satisfies 
$$\vert B_{\theta}(x) -  B_{\theta}(y) \vert \leq d(x,y)\ \mbox{for any $x, y\in X$}.$$

(iv)\, First, we show that $\mathbb{B}_{\mu}$ admits a gradient vector field. Let $v$ be an arbitrary tangent vector at $x\in X$ and take a geodesic $\sigma=\sigma(t)$ satisfying $\sigma(0)=x$ and $\dot{\sigma}(0)=v\in T_xX$.
We remark that $v\in T_xX$ is not necessarily a unit vector.
Since $\nabla B_{\theta}$ is uniformly bounded ($\vert\nabla B_{\theta}\vert \equiv 1$),
we may interchange the order of integration of $B_{\theta}(\sigma(t))$ with respect to $\mu\in \mathcal{P}^+(\partial X)$ and differentiation with respect to $t$. Then, the directional derivative to $v$ of  $\mathbb{B}_{\mu}$ is given by
\begin{equation}\label{119}
\begin{split}
v \mathbb{B}_{\mu}
=&\left.\frac{d}{dt}\right\vert_{t=0}\int_{\theta\in\partial X}B_{\theta}(\sigma(t))d\mu(\theta)\\
=&\int_{\theta\in\partial X}\left.\frac{\partial}{\partial t}\right\vert_{t=0}
B_{\theta}(\sigma(t))d\mu(\theta)
=\int_{\theta\in\partial X}\langle(\nabla B_{\theta})_x,v\rangle d\mu(\theta),
\end{split}
\end{equation}
from which we find that $\mathbb{B}_{\mu}$ is of $C^1$-class. This implies that the gradient vector field $\nabla\mathbb{B}_{\mu}$ is well-defined on $X$ and satisfies
\begin{equation}\label{120}
\langle(\nabla\mathbb{B}_{\mu})_x,v\rangle
=\int_{\partial X}\langle(\nabla B_{\theta})_x,v\rangle d\mu(\theta).
\end{equation}
 Letting $v=(\nabla\mathbb{B}_{\mu})_x$ in \eqref{120}, we have
\begin{equation}\label{121}
\vert(\nabla\mathbb{B}_{\mu})_x\vert^2
\leq\vert(\nabla\mathbb{B}_{\mu})_x\vert\int_{\partial X}\vert(\nabla B_{\theta})_x\vert 
d\mu(\theta){ =}\vert(\nabla\mathbb{B}_{\mu})_x\vert,
\end{equation}
from which  (iv) is shown.

(v)\, In general, the Hessian of a function $f : X\to\mathbb{R}$ is given by 
$$\nabla df(v,v)=\left.\frac{d^2}{dt^2}\right\vert_{t=0}f(\sigma(t)),$$
where $\sigma$ is a geodesic satisfying $\sigma(0) =x\in X$ and ${\dot \sigma}(0) = v$.
Therefore, to define the Hessian of the $\mu$-averaged Busemann function,
it suffices that the order of integration with respect to $\mu\in \mathcal{P}^+(\partial X)$ and differentiation with respect to $t$ in (\ref{124}), the equation below  which is obtained from \eqref{119}, is interchangeable:
\begin{equation}\label{124}
\left.\frac{d^2}{dt^2}\right\vert_{t=0}\mathbb{B}_{\mu}(\sigma(t))
=\left.\frac{d}{dt}\right\vert_{t=0}\int_{\theta\in\partial X}\langle(\nabla B_{\theta})_{\sigma(t)},
\dot{\sigma}(t)\rangle d\mu(\theta),
\end{equation}
so that it  suffices to show that 
$$
(\nabla dB_{\theta})_x(v,v) =\left.\displaystyle{\frac{d^2}{dt^2}}\right|_{t=0}B_{\theta}(\sigma(t))
$$
is uniformly bounded as a function of $\theta\in\partial X$.
From Bochner's formula (\cite[Proposition 4.15]{GHL}), uniform boundedness of $\vert \nabla d B_{\theta}\vert$ is asserted under the assumption of the boundedness of the Ricci curvature of $X$ and the uniform boundedness of $d \Delta B_{\theta}$. Therefore, one obtains (\ref{hessianaverageBusemann}).
\end{proof}

\begin{theorem}\label{exist}
Let $X$ be a Hadamard manifold satisfying Hypothesis \ref{hypo}.
Then,  an arbitrary probability measure $\mu\in\mathcal{P}^+(\partial X)$ admits a barycenter.
\end{theorem}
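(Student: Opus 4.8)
The plan is to show that $\mathbb{B}_{\mu}$ attains a minimum by proving it is a proper function, i.e., that its sublevel sets are compact (equivalently bounded, since $X$ is a Hadamard manifold and hence proper). Since $\mathbb{B}_{\mu}$ is continuous (indeed Lipschitz, by Proposition \ref{averaged}(iii)), properness together with $\mathbb{B}_{\mu}(x_0) = 0$ immediately gives a global minimizer $x$, which is then a critical point of $\mathbb{B}_{\mu}$ by smoothness, hence a barycenter. The heart of the argument is therefore Proposition \ref{averaged}(ii): along every geodesic ray $\gamma$ one has $\mathbb{B}_{\mu}(\gamma(t)) \to \infty$ as $t \to \infty$. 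This is where Hypothesis \ref{hypo} and Theorem \ref{visibility} enter.

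First I would fix a geodesic ray $\gamma$ with $\gamma(0) = x_0$ and let $\theta_0 := [\gamma] \in \partial X$ be its endpoint at infinity. By Theorem \ref{visibility}, for every $\theta \in \partial X$ with $\theta \neq \theta_0$ we have $B_{\theta}(\gamma(t)) \to \infty$ as $t \to \infty$. The set $\{\theta_0\}$ is a single point, and since $\mu \in \mathcal{P}^+(\partial X)$ has no atoms, $\mu(\{\theta_0\}) = 0$; thus $B_{\theta}(\gamma(t)) \to \infty$ for $\mu$-almost every $\theta$. Next I would feed this into the integral $\mathbb{B}_{\mu}(\gamma(t)) = \int_{\partial X} B_{\theta}(\gamma(t))\, d\mu(\theta)$ using Fatou's lemma: since $B_{\theta}(\gamma(t)) \geq B_{\theta}(x_0) - d(x_0, \gamma(t)) = -t$ is bounded below on each compact $t$-interval (more precisely, writing $g_t(\theta) := B_{\theta}(\gamma(t)) + t \geq 0$ by property (iii) of the normalized Busemann function and (i)), Fatou gives $\liminf_{t\to\infty} \int_{\partial X} g_t \, d\mu \geq \int_{\partial X} \liminf_{t\to\infty} g_t \, d\mu = \infty$, hence $\mathbb{B}_{\mu}(\gamma(t)) = \int g_t\, d\mu - t \cdot$ — wait, one must be slightly careful here: $g_t = B_\theta(\gamma(t)) + t$ need not be monotone, but it is nonnegative and has infinite $\liminf$ pointwise a.e., so Fatou applied directly to the nonnegative functions $g_t$ still yields $\int g_t \, d\mu \to \infty$, and since the $t$ in $B_\theta(\gamma(t)) = g_t(\theta) - t$ is killed by... no: $\mathbb{B}_\mu(\gamma(t)) = \int g_t\,d\mu$ already because $\int d\mu = 1$ would subtract $t$ — I would instead simply work with the nonnegative integrands $B_\theta(\gamma(t)) - B_\theta(\gamma(0)) + 0$; cleanest is: each $\theta \mapsto B_\theta(\gamma(t))$ is bounded below uniformly in $\theta$ by $-t$ (a constant in $\theta$), so $B_\theta(\gamma(t)) + t \geq 0$, and $\mathbb{B}_\mu(\gamma(t)) + t = \int_{\partial X}(B_\theta(\gamma(t)) + t)\,d\mu(\theta) \to \infty$ by Fatou, whence a fortiori $\mathbb{B}_\mu(\gamma(t)) \to \infty$. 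For a general geodesic ray not starting at $x_0$, I would compare with a cofinal ray from $x_0$ using convexity of $\mathbb{B}_\mu$ (Proposition \ref{averaged}(i)) and Lipschitz continuity (iii), or simply note $\mathbb{B}_\mu(\gamma(t)) \geq \mathbb{B}_\mu(\tilde\gamma(t - c)) - c'$ for a ray $\tilde\gamma$ from $x_0$ asymptotic to $\gamma$; a convex function blowing up along every ray from $x_0$ is proper.

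From properness: pick $R$ so large that $\mathbb{B}_{\mu}(x) > 0 = \mathbb{B}_\mu(x_0)$ for all $x$ with $d(x_0, x) \geq R$ — this follows because along every unit-speed geodesic from $x_0$ the convex function $t \mapsto \mathbb{B}_\mu(\gamma(t))$ tends to $+\infty$, and by convexity it exceeds any fixed level beyond a uniform radius (here I would invoke that $X$ is a proper metric space, so the sphere $S(x_0, R)$ is compact and the first-exit radii vary continuously). Then the minimum of $\mathbb{B}_\mu$ over the compact ball $\overline{B(x_0, R)}$ is a global minimum of $\mathbb{B}_\mu$ on $X$; call it $x$. Since $\mathbb{B}_\mu$ is $C^1$ (Proposition \ref{averaged}(iv)), $(\nabla \mathbb{B}_\mu)_x = 0$, so $x$ is a barycenter of $\mu$. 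I expect the main obstacle to be the careful justification of Proposition \ref{averaged}(ii) in the non-monotone regime — controlling $\mathbb{B}_\mu(\gamma(t))$ uniformly enough to apply Fatou and to convert pointwise-a.e. blow-up into integral blow-up — together with the reduction from arbitrary rays to rays based at $x_0$; uniqueness of the barycenter is a separate matter (requiring strict convexity, e.g. under the Hessian positivity in Proposition \ref{averaged}(v)) and is not claimed in this statement.
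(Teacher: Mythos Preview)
Your overall strategy coincides with the paper's: prove $\mathbb{B}_\mu(\gamma(t)) \to +\infty$ along every geodesic ray from $x_0$, then use compactness of $S_{x_0}X$ together with convexity of $\mathbb{B}_\mu$ to conclude that sublevel sets are bounded, so that $\mathbb{B}_\mu$ attains its minimum. The compactness reduction you sketch is essentially the paper's sequence-extraction argument.

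The gap is in your Fatou step. You set $g_t(\theta) := B_\theta(\gamma(t)) + t \geq 0$ and observe $g_t(\theta) \to \infty$ for $\mu$-a.e.\ $\theta$; Fatou then gives $\mathbb{B}_\mu(\gamma(t)) + t = \int g_t\,d\mu \to \infty$. But this does \emph{not} yield $\mathbb{B}_\mu(\gamma(t)) \to \infty$: the $+t$ could be doing all the work (a constant, or $f(t)=-t/2$, is convex with $f(0)=0$ and $f(t)+t\to\infty$). Your ``a fortiori'' is exactly backwards. The paper closes this gap by exploiting convexity of each \emph{individual} Busemann function $t \mapsto B_\theta(\gamma(t))$, not just of $\mathbb{B}_\mu$: since $B_\theta(\gamma(0))=0$, convexity gives $t_1\,B_\theta(\gamma(t)) \geq t\,B_\theta(\gamma(t_1))$ for $0 \leq t_1 \leq t$, i.e.\ the slope $B_\theta(\gamma(t))/t$ is nondecreasing. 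One then fixes $t_1$ with $\mu\bigl(\{\theta: B_\theta(\gamma(t_1)) \leq 0\}\bigr) < 1$ (here visibility and continuity in $\theta$ from Hypothesis~\ref{hypo} are both used), chooses a compact $K$ in the complement with $\mu(K)>0$ and $B_\theta(\gamma(t_1)) \geq C > 0$ on $K$, and the slope inequality produces genuine linear growth
\[
\mathbb{B}_\mu(\gamma(t)) \ \geq\ \frac{t}{t_1}\Bigl(C\,\mu(K) - D\,\mu(J_{\theta_0}(t))\Bigr),
\qquad D:=\sup_\theta |B_\theta(\gamma(t_1))|,
\]
with $\mu(J_{\theta_0}(t)) \to 0$. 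An equivalent repair of your argument is to apply dominated convergence to the bounded, nondecreasing family $h_t(\theta):=B_\theta(\gamma(t))/t \in [-1,1]$, whose pointwise limit is positive $\mu$-a.e.; either way, some quantitative use of the convexity of each $B_\theta$ is required, and Fatou on $B_\theta(\gamma(t))+t$ alone does not supply it.
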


\begin{remark}
In \cite{BCG}, Besson et al. show the existence of barycenter. They assume that the Hadamard manifold $X$ is a rank one symmetric space of non-compact type, for which Hypothesis \ref{hypo} is satisfied.
They obtain Theorem \ref{exist} for general probability measures with no atom.
\end{remark}

We will outline a proof of Theorem \ref{exist}.
For a given constant $C>0$, we set $A_C:=\{y\in X\,\vert\,\mathbb{B}_{\mu}(y)\leq C\}$.
It is  seen that  $x_0\in A_C$, because $\mathbb{B}_{\mu}(x_0)=0$.
Hence, we find that $A_C$ is a non-empty closed subset of $X$.

Now we show that $A_C$ is bounded.
Since $\mathbb{B}_{\mu}$ is convex, $A_C$ is a convex set.

Choose $\theta\in\partial X$ arbitrarily and fix it.
Let $\gamma$ be a geodesic on $X$ satisfying $\gamma(0)=x_0$ and $[\gamma]=\theta$.
Then we will show that the convex function $\mathbb{B}_{\mu}$ satisfies $\displaystyle \lim\limits_{t\to\infty}\mathbb{B}_{\mu}(\gamma(t))= +\infty$, as follows.

Since $B_{\theta}$ is convex and $B_{\theta}(x_0)=0$, it holds that for any geodesic $\sigma$ through $x_0$ at $t=0$ ($\sigma(0) = x_0$), 
\begin{equation}\label{131}
t_1\,B_{\theta}(\sigma(t))\geq t\,B_{\theta}(\sigma(t_1))\quad(0\leq t_1\leq t)\quad\forall \theta\in\partial X.
\end{equation}
Moreover, since the $\mu$-averaged Busemann function $\mathbb{B}_{\mu}$ is also convex,  from Proposition \ref{averaged}\,(i), it holds similarly as (\ref{131})
\begin{equation}\label{convex-2}
t_1\ \mathbb{B}_{\mu}(\sigma(t))\geq t\ \mathbb{B}_{\mu}(\sigma(t_1))\quad(0\leq t_1\leq t)\quad\forall \mu\in {\mathcal P}^+(\partial X).
\end{equation}

Next, we choose an arbitrary ideal point $\theta_0\in\partial X$ and fix it.
Let $\gamma_0$ be a geodesic satisfying $\gamma_0(0)=x_0$ and $[\gamma_0]=\theta_0$.
For a positive number $t$, we set a subset $J_{\theta_0}(t)$ of $\partial X$ by $J_{\theta_0}(t):=\{\theta\in\partial X\,\vert\,B_{\theta}(\gamma_0(t))\leq 0\}$.
From the assumption of the theorem, that is, from  Hypothesis  \ref{hypo}, $B_{\theta}(x)$ is continuous as a function of $\theta$ so that $J_{\theta_0}(t)$ is a compact subset of $\partial X$.
Obviously $\theta_0\in J_{\theta_0}(t)$, since $B_{\theta_0}(\gamma_0(t)) = -t$.

\begin{lem}\label{lem_t1}
There exists $t_1 \in (0,\infty)$ such that $\mu(J_{\theta_0}(t_1)) < 1$.
\end{lem}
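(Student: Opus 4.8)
The plan is to show that, along the geodesic $\gamma_0$, the family $J_{\theta_0}(t)$ is nested and decreasing in $t$, and that it shrinks to the single point $\{\theta_0\}$, whose $\mu$-measure is $0$; a routine continuity-of-measure argument then produces the desired $t_1$.

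First I would record the monotonicity: for $0\le s\le t$ one has $J_{\theta_0}(t)\subseteq J_{\theta_0}(s)$. This is exactly inequality \eqref{131} applied to the geodesic $\sigma=\gamma_0$ (which satisfies $\gamma_0(0)=x_0$, so $B_\theta(\gamma_0(0))=0$): if $B_\theta(\gamma_0(t))\le 0$, then $s\,B_\theta(\gamma_0(t))\ge t\,B_\theta(\gamma_0(s))$ forces $B_\theta(\gamma_0(s))\le 0$ for every $0\le s\le t$. Equivalently, this is convexity of $t\mapsto B_\theta(\gamma_0(t))$ together with the normalization $B_\theta(x_0)=0$.

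Next I would identify the intersection. A point $\theta$ lies in $\bigcap_{t>0}J_{\theta_0}(t)$ precisely when $B_\theta(\gamma_0(t))\le 0$ for all $t>0$. But by Theorem \ref{visibility} (the visibility characterization), for every $\theta\in\partial X$ with $\theta\neq\theta_0=[\gamma_0]$ one has $\lim_{t\to\infty}B_\theta(\gamma_0(t))=+\infty$; hence $\bigcap_{t>0}J_{\theta_0}(t)\subseteq\{\theta_0\}$. Since $\mu\in\mathcal{P}^+(\partial X)$ is absolutely continuous with respect to the standard measure $d\theta$ on $\partial X\cong S^{n-1}$ (in particular $\mu$ has no atoms), $\mu(\{\theta_0\})=0$, so $\mu\bigl(\bigcap_{t>0}J_{\theta_0}(t)\bigr)=0$.

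Finally, taking $t_n=n\to\infty$, the sets $J_{\theta_0}(n)$ form a decreasing sequence of compact—hence Borel—subsets of $\partial X$ with $\mu(J_{\theta_0}(1))\le 1<\infty$, and by the monotonicity $\bigcap_n J_{\theta_0}(n)=\bigcap_{t>0}J_{\theta_0}(t)$; continuity of measure from above then gives $\mu(J_{\theta_0}(n))\to 0$. In particular $\mu(J_{\theta_0}(t_1))<1$ for $t_1=n$ large enough, which is the assertion. There is no real obstacle here: the only points needing a line of justification are the monotonicity step (which is precisely \eqref{131}) and the non-atomicity of $\mu$, the lemma being essentially a repackaging of Theorem \ref{visibility} together with a standard measure-theoretic limit.
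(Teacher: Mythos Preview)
Your proof is correct and follows essentially the same approach as the paper: monotonicity of $J_{\theta_0}(t)$ via \eqref{131}, identification of the intersection as $\{\theta_0\}$ via Theorem~\ref{visibility}, and continuity of measure from above together with $\mu(\{\theta_0\})=0$. If anything, you are slightly more careful than the paper in passing to a countable sequence before invoking continuity from above and in explaining why $\mu$ is non-atomic.
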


In fact, from \eqref{131}, we have
$J_{\theta_0}(t)\subset J_{\theta_0}(t_1)$, if  $0\leq t_1\leq t$. We have from Theorem \ref{visibility}, i.e., from the condition equivalent to the visibility axiom $\mathop{\bigcap}_{t\in[0,\infty)}J_{\theta_0}(t)=\{\theta_0\}$.
Moreover, from \eqref{21} we have for a probability measure $\mu$
\begin{equation}\label{136}
\lim_{t\to\infty}\mu(J_{\theta_0}(t))
=\mu\left(\mathop{\bigcap}_{t\in[0,\infty)}J_{\theta_0}(t)\right)=\mu(\{\theta_0\})=0,
\end{equation}
from which we obtain Lemma \ref{lem_t1}.
We notice that if $t\geq t_1$, then $\mu(J_{\theta_0}(t))\leq\mu(J_{\theta_0}(t_1))<1$ holds.

We take a compact subset $K\subset \partial X\setminus J_{\theta_0}(t_1)$ which satisfies $\mu(K)>0$.
In fact, we can choose such a subset $K$ because $\mu(\partial X \setminus J_{\theta_0}(t_1)) > 0$.
Since $B_{\theta}(\gamma(t))>0$ holds for any $\theta\in\partial X\setminus J_{\theta_0}(t)$,
we have
\begin{equation}\label{137}
\begin{split}
\int_{\theta\in\partial X}B_{\theta}(\gamma(t))d\mu(\theta)
=&\Big(\int_{\theta\in J_{\theta_0}(t)}+\int_{\theta\in\partial X\setminus J_{\theta_0}(t)}\Big)B_{\theta}(\gamma(t))d\mu(\theta)\\
\geq& \int_{\theta\in J_{\theta_0}(t)}B_{\theta}(\gamma(t))d\mu(\theta)
+\int_{\theta\in K} B_{\theta}(\gamma(t))d\mu(\theta).
\end{split}
\end{equation}
Now we fix $t_1 > 0$ in our lemma.
Since the subset $K$ is compact, we can retake the constant $C>0$ such that
\begin{equation}\label{138}
B_{\theta}(\gamma(t_1))\geq C>0\quad
(\forall\,\theta\in K).
\end{equation}
From \eqref{131}, \eqref{137} and \eqref{138}, we have
\begin{equation}\label{139}
\int_{\partial X} B_{\theta}(\gamma(t)) d\mu(\theta) \geq \frac{t}{t_1} \int_{J_{\theta_0}(t)} B_{\theta}(\gamma(t_1)) d\mu(\theta) + C \frac{t}{t_1} \mu(K).
\end{equation}
To estimate the first term of the right hand side of \eqref{139},
we set $D:=\sup\{\vert B_{\theta}(\gamma(t_1))\vert$\, $\,\,\vert\,\theta\in\partial X\}$.
Since $\partial X$ is compact, the continuous function $\theta\mapsto B_{\theta}(\gamma(t_1))$ is bounded as a function of $\theta$.
Hence, $D<\infty$ is asserted.
From \eqref{139}, we get the following:
\begin{equation}\label{141}
\mathbb{B}_{\mu}(\gamma(t)) =
\int_{\partial X}B_{\theta}(\gamma(t))d\mu(\theta)\geq\frac{t}{t_1}\left(C\mu(K)-D\mu(J_{\theta_0}(t))\right).
\end{equation}
Therefore, we obtain from \eqref{136} $\displaystyle \lim_{t\to\infty}\mathbb{B}_{\mu}(\gamma(t)) =+\infty$.

Now, we suppose that $A_C$ be not bounded.
Then, we can take a sequence $\{ y_i\}$ of points in $A_C$ such that $d_i = d(y_i,x_0) \to +\infty$, $i \to +\infty$ ($d_i < d_j$, $i < j$).
Therefore, there exists a  sequence $\{v_i\}$ of unit tangent vectors such that $y_i = \exp_{x_0} d_i v_i$.
Take an appropriate subsequence of $\{v_i\}$, denoted by the same letter $\{v_i\}$ for brevity and set $\lim_i v_i = v_{\infty}$.
A geodesic $\gamma_{\infty}(t)$ given by $\gamma_{\infty}(t)=\exp_{x_0} t v_{\infty}$ satisfies $\displaystyle \mathbb{B}_{\mu}(\gamma_{\infty}(t))  = \lim_{i\to\infty} \mathbb{B}_{\mu}(\exp_{x_0} t v_i)$, $t > 0$.
Since for any $t > 0$ there exists $i_0 > 0$ such that $d_i \geq t$ for any $i \geq i_0$,
we have from \eqref{convex-2}
\begin{equation}
\mathbb{B}_{\mu}(\exp_{x_0} t v_i) \leq \frac{t}{d_i}\, \mathbb{B}_{\mu}(\exp_{x_0} d_i v_i) = \frac{t}{d_i} \mathbb{B}_{\mu}(y_i) \leq \frac{t}{d_i} C \leq C,
\end{equation}
a contradiction, since $\mathbb{B}_{\mu}(\gamma_{\infty}(t)) \to \infty$ ($t\to \infty$).
Hence, $A_C$ is bounded.
Since $A_C$ is bounded and closed, $\mathbb{B}_{\mu}$ has a minimum value on $A_C$, i.e., $\mu$ has a barycenter.

With respect to the uniqueness of barycenter, one has the following.

\begin{theorem}
Let  $X$ be a Hadamard manifold satisfying Hypothesis \ref{hypo}.
If for any $\mu\in\mathcal{P}^+(\partial X)$ the Hessian of $\mu$-averaged Busemann function is positive definite everywhere on $X$, then there exists uniquely a barycenter of $\mu$.
\end{theorem}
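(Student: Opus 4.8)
The plan is to prove uniqueness by a standard convexity argument, exploiting the fact that $\mathbb{B}_{\mu}$ is convex along geodesics with positive-definite Hessian. Suppose, for contradiction, that $\mu\in\mathcal{P}^+(\partial X)$ has two distinct barycenters $x_1\neq x_2$. Since both are critical points of $\mathbb{B}_{\mu}$, and by Theorem~\ref{exist} each is in fact a minimum, we have $\mathbb{B}_{\mu}(x_1)=\mathbb{B}_{\mu}(x_2)=:m$, the common minimum value. Because $X$ is a Hadamard manifold, by property (i) in \S\ref{4} there is a unique shortest geodesic $\gamma:[0,L]\to X$ with $\gamma(0)=x_1$, $\gamma(L)=x_2$, parametrized by arc-length, with $L=d(x_1,x_2)>0$.

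First I would consider the restriction $\varphi(t):=\mathbb{B}_{\mu}(\gamma(t))$ on $[0,L]$. By Proposition~\ref{averaged}\,(i) and the hypothesis on the Hessian, $\varphi$ is $C^2$ with $\varphi''(t)=(\nabla d\mathbb{B}_{\mu})_{\gamma(t)}(\dot\gamma(t),\dot\gamma(t))>0$ for all $t$, so $\varphi$ is strictly convex on $[0,L]$. But $\varphi(0)=\varphi(L)=m$, while strict convexity forces $\varphi(t)<\max\{\varphi(0),\varphi(L)\}=m$ for every $t\in(0,L)$. This contradicts the fact that $m$ is the minimum value of $\mathbb{B}_{\mu}$ on all of $X$. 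Hence $x_1=x_2$, and the barycenter is unique.

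There is a small detail worth spelling out: one must make sure the argument does not secretly require knowing in advance that critical points are minima. The cleanest route is to invoke Theorem~\ref{exist} for existence of at least one minimizer $x_\ast$, then observe that \emph{any} critical point $y$ of a function whose Hessian is everywhere positive definite must coincide with $x_\ast$ --- indeed, apply the strict-convexity argument above to the geodesic from $x_\ast$ to $y$: the function $t\mapsto\mathbb{B}_{\mu}(\gamma(t))$ is strictly convex, has a critical point at $t=0$ (so $t=0$ is its unique minimum on the segment) and a critical point at $t=L$ (so $t=L$ is also its unique minimum), which is impossible unless $L=0$. This shows simultaneously that the barycenter exists, is the unique critical point, and is the unique minimizer.

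I do not expect a serious obstacle here; the content is entirely in Theorem~\ref{exist} (existence) and in the hypothesis (positive-definiteness of $\nabla d\mathbb{B}_{\mu}$), both of which I am entitled to assume. The only point requiring a little care is the passage from ``$\nabla d\mathbb{B}_{\mu}$ positive definite'' to ``$\mathbb{B}_{\mu}$ strictly convex along every geodesic'': one needs that $\gamma$ is a geodesic (so that the second derivative of $\mathbb{B}_{\mu}\circ\gamma$ is literally the Hessian evaluated on $\dot\gamma$, with no extra $\nabla_{\dot\gamma}\dot\gamma$ term), which is immediate since $X$ is Hadamard and we use the minimizing geodesic between the two points. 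No further regularity issues arise because Proposition~\ref{averaged}\,(v) already provides the Hessian as a genuine bilinear form.
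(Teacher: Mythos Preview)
Your argument is correct and is exactly the standard strict-convexity proof one expects here. The paper itself states this theorem without proof, evidently regarding it as an immediate consequence of Theorem~\ref{exist} (existence) together with the positive-definiteness hypothesis; your write-up supplies precisely the omitted details, and your second paragraph nicely handles the subtlety that a barycenter is defined as a critical point rather than a priori as a minimizer.

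One minor remark: in your first paragraph you appeal to Proposition~\ref{averaged}\,(i) for the $C^2$ regularity of $\varphi$, but part~(i) only gives convexity. The existence of the Hessian (and hence $\varphi\in C^2$) is what Proposition~\ref{averaged}\,(v) addresses under additional curvature hypotheses; however, since the theorem's hypothesis already assumes that $\nabla d\mathbb{B}_{\mu}$ is well-defined and positive definite, you may simply take the $C^2$ regularity as part of that assumption. With that adjustment the proof is complete.
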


For the positive definiteness of the Hessian of $\mu$-averaged Busemann function, we have the following.
\begin{theorem}\label{hessian}Assume that every averaged Busemann function admits its Hessian in the form (\ref{hessianaverageBusemann}) in (v), Proposition \ref{averaged}.
\begin{enumerate}
\item If the Ricci curvature of $X$ is negative everywhere, i.e., $\mathrm{Ric}_x < 0$, $\forall x\in X$, then for any $\mu\in\mathcal{P}^+(\partial X)$ the Hessian $\nabla d\mathbb{B}_{\mu}$ of the $\mu$-averaged Busemann function is positive definite everywhere on $X$.
\item If there exists a certain probability measure  $\mu_0\in\mathcal{P}^+(\partial X)$ such that the Hessian $\nabla d\mathbb{B}_{\mu_0}$ of $\mathbb{B}_{\mu_0}$ is positive definite on $X$, 
the Hessian $\nabla d\mathbb{B}_{\mu}$ of the $\mu$-averaged Busemann function $\mathbb{B}_{\mu}$ is also positive definite for any $\mu\in\mathcal{P}^+(\partial X)$. 
\end{enumerate}
\end{theorem}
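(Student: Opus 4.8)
The plan is to prove both parts by contradiction, using the assumed integral representation $(\nabla d\mathbb{B}_\mu)_x(u,v)=\int_{\partial X}(\nabla dB_\theta)_x(u,v)\,d\mu(\theta)$ of Proposition \ref{averaged}\,(v) to reduce each assertion to a pointwise statement at a single point $x\in X$. The starting observation is that, by property (viii) of the normalized Busemann function, every $(\nabla dB_\theta)_x$ is positive semi-definite, so the integrand $\theta\mapsto(\nabla dB_\theta)_x(u,u)$ is non-negative. Hence, if $(\nabla d\mathbb{B}_\mu)_x(u,u)=0$ for some $\mu\in\mathcal{P}^+(\partial X)$, $x\in X$ and $u\in T_xX\setminus\{0\}$, then $(\nabla dB_\theta)_x(u,u)=0$ for $\mu$-almost every $\theta$. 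Since each measure in $\mathcal{P}^+(\partial X)$ is mutually absolutely continuous with $d\theta$ (it has positive continuous density), this is equivalent to the vanishing of $(\nabla dB_\theta)_x(u,u)$ for $d\theta$-almost every $\theta\in\partial X$ --- a condition that no longer refers to $\mu$.

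Granting this, (ii) is immediate. If $\nabla d\mathbb{B}_\mu$ failed to be positive definite at some $x$ for some $\mu$, pick $u\neq0$ with $(\nabla d\mathbb{B}_\mu)_x(u,u)=0$; by the above $(\nabla dB_\theta)_x(u,u)=0$ for $d\theta$-a.e.\ $\theta$, and since $\mu_0$ too is absolutely continuous with respect to $d\theta$ we obtain $(\nabla d\mathbb{B}_{\mu_0})_x(u,u)=\int_{\partial X}(\nabla dB_\theta)_x(u,u)\,d\mu_0(\theta)=0$, contradicting the positive definiteness of $\nabla d\mathbb{B}_{\mu_0}$ at $x$.

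For (i) the point is that $\mathrm{Ric}_x<0$ forbids $(\nabla dB_\theta)_x(u,u)$ from vanishing for $d\theta$-a.e.\ $\theta$. Transport everything to $S_xX$ by the homeomorphism $\beta_x$, which carries the normalized round measure to $d\theta$, and write $\theta=\beta_x(v)$, so that $(\nabla B_\theta)_x=-v$. Suppose $(\nabla dB_{\beta_x(v)})_x(u,u)=0$ for a.e.\ $v$; fix such a $v$ not proportional to $u$ (a.e.\ $v$ qualifies) and set $w:=u-\langle u,v\rangle v\neq0$. Then $w\perp v$, so $w$ is tangent to the horosphere $H_{\theta,x}$ whose unit normal at $x$ is $(\nabla B_\theta)_x=-v$; and since $v$ lies in the kernel of $(\nabla dB_\theta)_x$ (property (viii)), $(\nabla dB_\theta)_x(w,w)=(\nabla dB_\theta)_x(u,u)=0$. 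As $(\nabla dB_\theta)_x$ is positive semi-definite, $w$ lies in its radical on $T_xH_{\theta,x}$; recalling $(\nabla dB_\theta)_x(\cdot,\cdot)=-\langle S_{\theta,x}\cdot,\cdot\rangle$ there, this says $S_{\theta,x}w=0$. Now let $\gamma$ be the geodesic with $\gamma(0)=x$, $\dot\gamma(0)=v$, let $w(t)$ be the parallel field along $\gamma$ with $w(0)=w$ --- so $w(t)\perp\dot\gamma(t)$, i.e.\ $w(t)$ stays tangent to the horosphere $H_t$ centered at $\theta=[\gamma]$ through $\gamma(t)$ --- and put $s(t):=\langle S_tw(t),w(t)\rangle$ with $S_t=S_{\theta,\gamma(t)}$. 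Positive semi-definiteness of $\nabla dB_\theta$ along $\gamma$ forces $s(t)\le0$ for all $t$, while $s(0)=-(\nabla dB_\theta)_x(w,w)=0$, so $s$ attains an interior maximum at $t=0$ and $s'(0)=0$. On the other hand, the Riccati equation $S'_t+S_t^2+R_t=O$ and $S_0w=0$ give
\[
s'(0)=\langle S'_0w,w\rangle=-|S_0w|^2-\langle R(w,v)v,w\rangle=-\langle R(w,v)v,w\rangle=-K(u,v)\,|w|^2 ,
\]
the last equality because $\mathrm{span}(w,v)=\mathrm{span}(u,v)$, so the sectional curvature of that plane equals $K(u,v)$. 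Hence $K(u,v)=0$.

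We have thus shown $K(u,v)=0$ for $d\theta$-a.e.\ direction $v$; since $v\mapsto K(u,v)$ is continuous on $S_xX\setminus\{\pm u/|u|\}$, it vanishes identically there, and therefore $\mathrm{Ric}_x(u,u)=\sum_i\langle R(u,e_i)e_i,u\rangle=0$ for an orthonormal basis $\{e_i\}$ of $u^\perp$ --- contradicting $\mathrm{Ric}_x<0$. This proves (i). Note that the argument uses only almost-everywhere information about $\theta\mapsto\nabla dB_\theta$ together with continuity of the sectional curvature, so no continuous dependence of $\nabla dB_\theta$ on $\theta$ is required. I expect the Riccati computation --- pinning a degeneracy direction $w$ of $(\nabla dB_\theta)_x$ on the horosphere to a zero of sectional curvature --- to be the technical heart; its force is felt only after averaging over $\mu$ and passing from ``a.e.\ direction $v$'' to ``all $v$'', which is precisely why the conclusion is about $\nabla d\mathbb{B}_\mu$ and not about individual $B_\theta$ (no single Busemann function need have non-degenerate Hessian on its horosphere).
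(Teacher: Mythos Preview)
Your proof is correct and follows essentially the same route as the paper's: reducing via the positive semi-definiteness of each $\nabla dB_\theta$ to the pointwise vanishing $(\nabla dB_\theta)_x(u,u)=0$, then using the shape operator together with the Riccati equation to force zero sectional curvature in every plane through $u$, whence $\mathrm{Ric}_x(u,u)=0$. For (ii) the paper phrases the comparison directly as $(\nabla d\mathbb{B}_\mu)_x(u,u)\ge C\,(\nabla d\mathbb{B}_{\mu_0})_x(u,u)$ with $C=\min_\theta(d\mu/d\mu_0)>0$, which is equivalent to your contrapositive argument; your detailed Riccati computation for (i) (projecting $u$ to the horosphere, differentiating $s(t)=\langle S_tw(t),w(t)\rangle$ at its maximum) is precisely the step the paper leaves as an outline.
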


\begin{proof}
We will give an outline of a proof of (i).
Assume that there exists a $\mu \in {\mathcal P}^+(\partial X)$ such that for $x\in X$ and $u(\not=0)\in T_xX$,  $(\nabla d \mathbb{B}_{\mu})_x(u,u) = 0$ holds.
Since  the Hessian $\nabla d B_{\theta}$ in (\ref{hessianaverageBusemann}) is positive semi-definite for any $\theta\in\partial X$, we have then $(\nabla d B_{\theta})_x(u,u) = 0$.
Hence we see ${ \langle S_{{\theta,x}} u,u\rangle} = 0$.
Here $S_{{ \theta,x}}$ is the shape operator of the horosphere $H_{x,\theta}$ centered at $\theta$ passing through $x\in X$.
Since $S_{\theta,x}$ is negative semi-definite, we have $\langle S_{{\theta,x}} u,v\rangle = 0$ for any $v\in T_xX$.
In fact, if we assume that there exists a tangent vector $v$ such that $\langle S_{{\theta,x}} u,v\rangle \not= 0$, $\langle S_{{\theta,x}} (u+tv),(u+tv)\rangle $ takes a positive value for some $t\not=0$, which is a  contradiction, since $S_{\theta,x}$ is negative semi-definite.  Hence, we have $S_{\theta,x}u = 0$.
From the Riccati equation for $S_t$, we can show that the sectional curvature of any 2-plane which contains $u$ vanishes so that we obtain $\mathrm{Ric}_x(u) = 0$.
This is a contradiction, since the Ricci curvature is assumed to be negative. Hence we obtain (i).

 To show (ii) we set $C = \min \left\{\left. (d\mu/d\mu_0)(\theta)\,\right|\, \theta\in \partial X\right\}$. Then $C > 0$ from the compactness of $\partial X$.
From the positive semi-definiteness of the Hessian $\nabla d B_{\theta}$ we find that $(\nabla d \mathbb{B}_{\mu})_x(u,u) \geq C (\nabla d \mathbb{B}_{\mu_0})_x(u,u)$, $\forall u\in T_xX$, $\forall x\in X$, from which (ii) is obtained.
\end{proof}

\begin{cor}
Under Hypothesis \ref{hypo}, namely, the assumption of Theorem \ref{exist} and  either (i) or (ii) of Theorem \ref{hessian}, an arbitrary probability measure
$\mu\in\mathcal{P}^+(\partial X)$ has a unique barycenter.
\end{cor}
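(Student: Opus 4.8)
The plan is to read this corollary as the conjunction of two facts already prepared in the text: existence of a barycenter, supplied verbatim by Theorem~\ref{exist} under Hypothesis~\ref{hypo}, and uniqueness, which I would obtain from the strict convexity of $\mathbb{B}_{\mu}$ along geodesics. So the first step is purely a citation: under Hypothesis~\ref{hypo} every $\mu\in\mathcal{P}^+(\partial X)$ has at least one barycenter, i.e. $\mathbb{B}_{\mu}$ attains its minimum and hence possesses a critical point. I would also record at the outset that, since $\mathbb{B}_{\mu}$ is convex (Proposition~\ref{averaged}(i)) and of class $C^{1}$ (indeed $C^{2}$, Proposition~\ref{averaged}(v)), a point is a critical point of $\mathbb{B}_{\mu}$ if and only if it is a global minimizer; thus the set of barycenters of $\mu$ is exactly $\operatorname*{Argmin}_{X}\mathbb{B}_{\mu}$, which is nonempty.

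Next I would upgrade convexity to strict convexity. By hypothesis either (i) or (ii) of Theorem~\ref{hessian} holds, so Theorem~\ref{hessian} applies and the Hessian $\nabla d\mathbb{B}_{\mu}$ is positive definite at every point of $X$, for every $\mu\in\mathcal{P}^+(\partial X)$. Consequently, for any nonconstant geodesic $\gamma\colon\mathbb{R}\to X$ the function $t\mapsto\mathbb{B}_{\mu}(\gamma(t))$ has second derivative $(\nabla d\mathbb{B}_{\mu})_{\gamma(t)}(\dot\gamma(t),\dot\gamma(t))>0$, hence is strictly convex on every interval. Here one uses that the Hessian of every $\mu$-averaged Busemann function is given by the integral formula \eqref{hessianaverageBusemann}; this is precisely the standing assumption of Theorem~\ref{hessian}, justified by the interchange of integration and differentiation in Proposition~\ref{averaged}(v) under boundedness of the Ricci curvature of $X$ and uniform boundedness of $d\Delta B_{\theta}$ (automatic, e.g., when $X$ is asymptotically harmonic, cf.\ Remark~\ref{remarkaverageBusemann}), so in the final write-up I would keep these boundedness conditions in force explicitly before invoking Theorem~\ref{hessian}.

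Finally I would close with the standard convexity argument for uniqueness. Suppose $p\neq q$ were two barycenters of $\mu$; then $\mathbb{B}_{\mu}(p)=\mathbb{B}_{\mu}(q)=m:=\min_{X}\mathbb{B}_{\mu}$. Since $X$ is a Hadamard manifold there is a unique (nonconstant) geodesic segment $\gamma\colon[0,1]\to X$ with $\gamma(0)=p$, $\gamma(1)=q$, and strict convexity of $t\mapsto\mathbb{B}_{\mu}(\gamma(t))$ gives $\mathbb{B}_{\mu}(\gamma(\tfrac12))<\tfrac12\mathbb{B}_{\mu}(p)+\tfrac12\mathbb{B}_{\mu}(q)=m$, contradicting the minimality of $m$. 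Hence $\operatorname*{Argmin}_{X}\mathbb{B}_{\mu}$ is a singleton, which is the assertion.

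The convexity argument itself is routine; the only point that genuinely needs care is the bundling of hypotheses, namely checking that the conclusion of Theorem~\ref{hessian} (positive definiteness everywhere) is actually applicable, i.e.\ that the representation \eqref{hessianaverageBusemann} of $\nabla d\mathbb{B}_{\mu}$ is available for all $\mu\in\mathcal{P}^+(\partial X)$. I expect this to be the main obstacle to a fully rigorous statement, and I would address it by invoking Proposition~\ref{averaged}(v) together with the ambient boundedness assumptions, rather than by any new computation.
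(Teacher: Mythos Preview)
Your proposal is correct and follows the paper's intended route: the corollary is stated without proof in the paper, being an immediate combination of Theorem~\ref{exist} (existence), the preceding uniqueness theorem (positive definite Hessian implies unique barycenter), and Theorem~\ref{hessian} (conditions guaranteeing positive definiteness). You have simply unpacked the middle step---strict convexity from positive definite Hessian, hence unique minimizer---which the paper states as a separate theorem without proof; your attention to the standing hypothesis that \eqref{hessianaverageBusemann} holds is also well placed, since Theorem~\ref{hessian} explicitly assumes it.
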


\begin{remark}\label{basepoint}
The barycenter of the standard probability measure $d \theta\in \mathcal{P}^+(\partial X)$ is the base point $x_0\in X$ (see \cite{IS6}).
\end{remark}

\section{Barycenter maps and barycenterically associated maps}\label{barycentermap}

When there exists a unique barycenter $x=\mathrm {bar}(\mu)\in X$ for any $\mu\in\mathcal{P}^+(\partial X)$,
we can define a map ${\rm bar}:\mathcal{P}^+(\partial X)\to X$; $\mu \mapsto x$ and call it barycenter map.

We notice that the differential map $(d {\rm bar})_{\mu} : T_{\mu} \mathcal{P}(\partial X) \rightarrow T_xX$,\, $x ={\rm bar}(\mu)$ of the barycenter map $\mathrm{bar}$ is well defined and is surjective (\cite{I}).

\begin{definition}\label{linear}
Let ${\rm bar}^{-1}(x)$ be the inverse image of the map $\mathrm{bar}$ at a point $x$.
For any $\mu\in\mathrm{bar}^{-1}(x)$ we define a map $\nu_x^{\mu}:T_xX\to T_{\mu}\mathcal{P}^+(\partial X)$ by
\begin{equation}\label{linearmap}
\nu_x^{\mu}(u)(\theta):=(dB_{\theta})_x(u)\,d\mu(\theta)\qquad
(u\in T_xX,\hspace{2mm}\theta\in\partial X).
\end{equation}
(see \cite{IS5}).
\end{definition}
Notice that the image of the map $\nu_x^{\mu}$ is included in the tangent space  $T_{\mu}\mathcal{P}^+(\partial X)$ at $\mu$, since  $x$ is the barycenter of $\mu$ if and only if 
$$
(d \mathbb{B}_{\mu})_x(u) = \int_{\theta\in\partial X} (dB_{\theta})_x(u)\,d\mu(\theta) = 0\qquad
$$ holds for any $ u\in T_xX$. Moreover, we find that $\nu_x^{\mu}$ is injective (\cite{I}).

Let $\mu(t)$ be a curve in ${\rm bar}^{-1}(x)$, $\vert t\vert < \varepsilon$ satisfying $\mu(0) = \mu$ and ${\dot \mu}(0) = \tau \in T_{\mu}\mathcal{P}^+(\partial X)$.
Then, $\tau$ fulfills $\int_{\theta\in\partial X} (dB_{\theta})_x(u) d \tau(\theta) = 0$ for any $u\in T_xX$, i.e., $G_{\mu}(\tau, \nu_x^{\mu}(u)) = 0$ with respect to the Fisher metric $G$,
from which we find that tangent vectors of ${\rm bar}^{-1}(x)$ are orthogonal to the image of $\nu_x^{\mu}$.
The following proposition indicates that ${\rm bar} : \mathcal{P}^+(\partial X) \rightarrow X$ admits a fiber space structure in a tangent space level.

\begin{proposition}\label{fibre}
Let $x\in X$.
For each $\mu\in{\rm bar}^{-1}(x)$, $T_{\mu}\mathcal{P}^+(\partial X)$ is decomposed into the $G_{\mu}$-orthogonally direct sum:
\begin{equation}
T_{\mu}\mathcal{P}^+(\partial X)
= T_{\mu}{\rm bar}^{-1}(x) \oplus {\rm Im}\,\nu_x^{\mu}.
\end{equation}
\end{proposition}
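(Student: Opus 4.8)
The plan is to exhibit the two summands explicitly, show each is contained in $T_{\mu}\mathcal{P}^+(\partial X)$, verify they are $G_{\mu}$-orthogonal, and finally argue that together they span the whole tangent space by a dimension-counting (or rather, complement-counting) argument adapted to the infinite-dimensional setting. The candidate for the first summand is $T_{\mu}{\rm bar}^{-1}(x)$, which makes sense because ${\rm bar}$ is smooth with surjective differential, so its fiber over $x$ is a (Banach) submanifold and the kernel of $(d\,{\rm bar})_{\mu}$ equals $T_{\mu}{\rm bar}^{-1}(x)$; the candidate for the second summand is ${\rm Im}\,\nu_x^{\mu}$, which by the remarks preceding the proposition is a well-defined injective image sitting inside $T_{\mu}\mathcal{P}^+(\partial X)$, hence an $n$-dimensional subspace.

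First I would record the orthogonality, which is already essentially done in the text: for $\tau \in T_{\mu}{\rm bar}^{-1}(x)$ one differentiates the defining relation $\int_{\theta\in\partial X}(dB_{\theta})_x(u)\,d\mu(\theta)=0$ along a curve $\mu(t)\subset{\rm bar}^{-1}(x)$ with $\dot\mu(0)=\tau$, obtaining $\int_{\theta\in\partial X}(dB_{\theta})_x(u)\,d\tau(\theta)=0$ for all $u\in T_xX$; rewriting this via \eqref{linearmap} and the definition \eqref{35} of the Fisher metric gives $G_{\mu}(\tau,\nu_x^{\mu}(u))=0$. Since $u$ ranges over all of $T_xX$, $\tau$ is $G_{\mu}$-orthogonal to every element of ${\rm Im}\,\nu_x^{\mu}$. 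In particular $T_{\mu}{\rm bar}^{-1}(x)\cap{\rm Im}\,\nu_x^{\mu}=\{0\}$, since $\nu_x^{\mu}$ is injective and any vector in the intersection is orthogonal to itself.

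Next I would show the sum is everything. Given an arbitrary $\tau\in T_{\mu}\mathcal{P}^+(\partial X)$, define $u:=(d\,{\rm bar})_{\mu}(\tau)\in T_xX$; I claim $\tau-\nu_x^{\mu}(L(u))$ lies in $T_{\mu}{\rm bar}^{-1}(x)=\ker(d\,{\rm bar})_{\mu}$ for a suitable linear isomorphism $L:T_xX\to T_xX$. The right object here is the composition $(d\,{\rm bar})_{\mu}\circ\nu_x^{\mu}:T_xX\to T_xX$; one computes that this equals (up to sign) the inverse of the Hessian endomorphism $(\nabla d\mathbb{B}_{\mu})_x$ acting on $T_xX$, which is invertible precisely because the uniqueness hypothesis forces $\nabla d\mathbb{B}_{\mu}$ to be positive definite. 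Concretely, differentiating the barycenter equation $(d\mathbb{B}_{\mu})_{{\rm bar}(\mu)}\equiv 0$ in the $\mu$ variable along $\tau$ yields, by the Hessian formula \eqref{hessianaverageBusemann} and \eqref{linearmap}, the identity $(\nabla d\mathbb{B}_{\mu})_x\big((d\,{\rm bar})_{\mu}(\tau),\cdot\big) + \langle\nu_x^{\mu}(\cdot),\tau\rangle_{G}=0$, i.e. $(\nabla d\mathbb{B}_{\mu})_x\circ(d\,{\rm bar})_{\mu} = -(d\,{\rm bar})_{\mu}\circ\nu_x^{\mu}$ after identifying $\nu_x^{\mu}$'s adjoint appropriately. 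Setting $L=-\big((d\,{\rm bar})_{\mu}\circ\nu_x^{\mu}\big)^{-1}$ then gives $(d\,{\rm bar})_{\mu}\big(\tau-\nu_x^{\mu}(Lu)\big)=u-u=0$, so $\tau=\big(\tau-\nu_x^{\mu}(Lu)\big)+\nu_x^{\mu}(Lu)$ is the desired decomposition. Combined with the orthogonality and trivial-intersection facts above, this proves the orthogonal direct sum decomposition.

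The main obstacle I anticipate is the bookkeeping in the "span" step: one must carefully differentiate the implicitly-defined barycenter map, justify that $(d\,{\rm bar})_{\mu}\circ\nu_x^{\mu}$ is the (invertible) Hessian endomorphism, and be careful about which pairings and adjoints are being used, since $\nu_x^{\mu}$ maps into an infinite-dimensional space while its "transpose" $(d\,{\rm bar})_{\mu}$ maps out of it. The invertibility of this composite is exactly where the positive-definiteness of $\nabla d\mathbb{B}_{\mu}$ (guaranteed under the standing hypotheses, cf.\ Theorem \ref{hessian}) enters, so the proposition should be understood as implicitly assuming the uniqueness regime; the orthogonality half, by contrast, is unconditional.
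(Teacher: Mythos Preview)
Your approach is correct and supplies details the paper omits: the paper does not prove this proposition, but only records in the preamble that $(d\,{\rm bar})_{\mu}$ is surjective, that $\nu_x^{\mu}$ is injective, and that $T_{\mu}{\rm bar}^{-1}(x)\perp{\rm Im}\,\nu_x^{\mu}$, leaving the direct-sum decomposition as an (unargued) consequence. Your span argument via the invertibility of $(d\,{\rm bar})_{\mu}\circ\nu_x^{\mu}$ is exactly the natural way to close the gap, and your identification of the positive-definite Hessian as the ingredient making that composite invertible is the right idea.

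One correction to your bookkeeping, which you rightly flagged as a possible obstacle: differentiating the barycenter equation yields
\[
(\nabla d\mathbb{B}_{\mu})_x\big((d\,{\rm bar})_{\mu}(\tau),u\big) \;=\; -\,G_{\mu}\big(\nu_x^{\mu}(u),\tau\big),
\]
which in operator form reads $H\circ(d\,{\rm bar})_{\mu}=-(\nu_x^{\mu})^{\ast}$, with $H$ the Hessian endomorphism and $(\nu_x^{\mu})^{\ast}$ the $G_{\mu}$-adjoint. The identity you wrote, $(\nabla d\mathbb{B}_{\mu})_x\circ(d\,{\rm bar})_{\mu}=-(d\,{\rm bar})_{\mu}\circ\nu_x^{\mu}$, does not type-check (the two sides have different domains). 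Precomposing the correct identity with $\nu_x^{\mu}$ gives $H\circ(d\,{\rm bar})_{\mu}\circ\nu_x^{\mu}=-(\nu_x^{\mu})^{\ast}\nu_x^{\mu}$; the right side is negative definite because $\nu_x^{\mu}$ is injective, and since $H$ is positive definite the invertibility of $(d\,{\rm bar})_{\mu}\circ\nu_x^{\mu}$ follows. With this fix your decomposition argument goes through cleanly.
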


\begin{remark}
$T_{\mu}{\rm bar}^{-1}(x)\subset T_{\mu}\mathcal{P}^+(\partial X)$ is the vertical subspace of the fiber space structure.
On the other hand, ${\rm Im}\,\nu_x^{\mu}$ is the horizontal subspace, normal to the fibers,
which means that the barycenter map $\mathrm{bar}$ satisfies an infinitesimal trivialization.
This proposition suggests that the barycenter map $\mathrm{bar}$ itself satisfies a local trivialization.
\end{remark}
 
 Now we are going to see geometric properties of a barycenter map.

\begin{proposition}\label{cocycle2}
Let $\phi$ be an isometry of $X$ and $\widehat{\phi}:\partial X\to\partial X$ be a homeomorphism of $\partial X$ induced by $\phi$\, (see \eqref{homeo}).
Let $\widehat{\phi}_{\sharp}:\mathcal{P}^+(\partial X)\to\mathcal{P}^+(\partial X)$ be the push-forward of $\widehat{\phi}:\partial X\to\partial X$. 
Then, we have for any $ \mu\in \mathcal{P}^+(\partial X)$\hspace{2mm} 
$\mathrm{bar}(\widehat{\phi}_{\sharp}\mu)=\phi({\rm bar}(\mu))$, i.e.,
$\mathrm{bar}\circ\widehat{\phi}_{\sharp}=\phi\circ{\rm bar}$ holds.
\end{proposition}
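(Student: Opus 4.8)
The plan is to reduce the proposition to the Busemann cocycle formula \eqref{cocycle}, which says precisely that precomposition with $\phi$ shifts an averaged Busemann function by an additive constant, combined with the change-of-variables formula \eqref{41} for push-forward measures. First I would fix $\mu\in\mathcal{P}^+(\partial X)$ and set $x=\mathrm{bar}(\mu)$; by the existence and uniqueness results of \S\ref{barycenter} (valid under Hypothesis \ref{hypo}), $x$ is the unique point of $X$ at which $\mathbb{B}_{\mu}$ attains its minimum, equivalently the unique critical point of $\mathbb{B}_{\mu}$, and the same applies to $\widehat{\phi}_{\sharp}\mu\in\mathcal{P}^+(\partial X)$.

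Next I would compute, for an arbitrary $y\in X$,
\[
\mathbb{B}_{\widehat{\phi}_{\sharp}\mu}(\phi(y))
=\int_{\theta\in\partial X}B_{\theta}(\phi(y))\,d(\widehat{\phi}_{\sharp}\mu)(\theta)
=\int_{\theta\in\partial X}B_{\widehat{\phi}(\theta)}(\phi(y))\,d\mu(\theta),
\]
where the second equality is \eqref{41} applied to $\Phi=\widehat{\phi}$. Now apply the Busemann cocycle formula \eqref{cocycle} with the ideal point $\widehat{\phi}(\theta)$ in place of $\theta$: since $\widehat{\phi}^{-1}(\widehat{\phi}(\theta))=\theta$, one gets $B_{\widehat{\phi}(\theta)}(\phi(y))=B_{\theta}(y)+B_{\widehat{\phi}(\theta)}(\phi(x_0))$. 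Integrating against $\mu$ then yields
\[
\mathbb{B}_{\widehat{\phi}_{\sharp}\mu}(\phi(y))=\mathbb{B}_{\mu}(y)+c,\qquad
c=\int_{\theta\in\partial X}B_{\widehat{\phi}(\theta)}(\phi(x_0))\,d\mu(\theta),
\]
with $c=c(\phi,\mu)$ independent of $y$. Hence $\mathbb{B}_{\widehat{\phi}_{\sharp}\mu}\circ\phi=\mathbb{B}_{\mu}+c$.

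To conclude I would invoke that $\phi:X\to X$ is a diffeomorphism: adding a constant does not move critical points, and composing with a diffeomorphism transports them by $\phi$, so $y$ is the (unique) minimizer of $\mathbb{B}_{\mu}$ if and only if $\phi(y)$ is the (unique) minimizer of $\mathbb{B}_{\widehat{\phi}_{\sharp}\mu}$. Taking $y=x=\mathrm{bar}(\mu)$ gives $\mathrm{bar}(\widehat{\phi}_{\sharp}\mu)=\phi(x)=\phi(\mathrm{bar}(\mu))$, i.e.\ $\mathrm{bar}\circ\widehat{\phi}_{\sharp}=\phi\circ\mathrm{bar}$.

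I do not anticipate a serious obstacle; the only points demanding care are keeping track of $\widehat{\phi}$ versus $\widehat{\phi}^{-1}$ when substituting into \eqref{cocycle}, checking that $\widehat{\phi}_{\sharp}\mu$ again belongs to $\mathcal{P}^+(\partial X)$ so that its barycenter is defined, and --- if one prefers to argue at the level of critical points rather than minima --- justifying the interchange of integration and differentiation, which is already supplied by Proposition \ref{averaged}(iv). Everything else is routine once the identity $\mathbb{B}_{\widehat{\phi}_{\sharp}\mu}\circ\phi=\mathbb{B}_{\mu}+\mathrm{const}$ is in place.
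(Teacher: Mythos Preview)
Your argument is correct and is essentially the same as the paper's: both integrate the Busemann cocycle formula \eqref{cocycle} against $\mu$ to obtain an ``averaged'' cocycle identity $\mathbb{B}_{\widehat{\phi}_{\sharp}\mu}\circ\phi=\mathbb{B}_{\mu}+\mathrm{const}$ (the paper writes this as $\mathbb{B}_{\mu}(\phi^{-1}x)=\mathbb{B}_{\widehat{\phi}_{\sharp}\mu}(x)+\mathbb{B}_{\mu}(\phi^{-1}x_0)$), and then read off that the unique minimizer is transported by $\phi$. Your version is in fact more explicit than the paper's one-line sketch, and your bookkeeping of $\widehat{\phi}$ versus $\widehat{\phi}^{-1}$ is handled correctly.
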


In fact, by integrating both side of the Busemann cocycle formula \eqref{cocycle} with respect to $\mu$, we obtain the averaged-Busemann cocycle formula
$$
\mathbb{B}_{\mu}(\phi^{-1}x) = \mathbb{B}_{\widehat{\phi}_{\sharp}\mu}(x) + \mathbb{B}_{\mu}(\phi^{-1}x_0),\quad\forall \mu\in {\mathcal P}^+(\partial X),
$$
from which our proposition is obtained.

Poisson kernel probability measures are measures significantly important in considering the barycenter maps.
As mentioned in \S \ref{intro},
the Poisson kernel $P(x,\theta)$ is the fundamental solution of the Dirichlet problem at infinity.
We define the Poisson kernel based on the argument of Shoen-Yau given in \cite{SY} as follows:

\begin{definition}[\cite{SY}]\label{poisson}
Let $x_0\in X$ be a fixed point and let $P(x,\theta)$ be a function on $X \times \partial X$. Then, $P(x,\theta)$ is called the Poisson kernel, normalized at a base point $x_0$, when it  satisfies  the following conditions:
\begin{enumerate}
\item $\Delta P(\cdot,\theta) = 0\hspace{3mm}\forall\,\theta\in\partial X$.
\item $P(x_0,\theta) = 1\hspace{3mm}\forall\,\theta\in\partial X$.
\item $P(x,\theta) d\theta \in \mathcal{P}^+(\partial X)$\hspace{3mm} $\forall\,x\in X$.
\item for any fixed $\theta\in\partial X$, the function on $X$, $x\mapsto P(x,\theta)$ is extended to a continuous function on $X\cup\partial X\setminus\{\theta\}$ and satisfies
$\lim_{x\to\theta'}P(x,\theta)=0$\hspace{3mm}for any $\theta'$ satisfying $\theta'\not=\theta$.
\end{enumerate}
\end{definition}

For any $f\in C^0(\partial X)$, the function $u(x)=\int_{\theta\in\partial X}P(x,\theta)\,f(\theta)\,d\theta$ is a solution to $\Delta u=0$ and satisfies the boundary condition at infinity $u\vert_{\partial X} = f$ (see \cite{SY,IS1}).

\begin{remark}Under the negative sectional curvature condition $- b^2 \leq   K \leq - a^2  < 0$, the existence and uniqueness of the Poisson kernel is guaranteed (see \cite{SY, AS}).
Refer also to \cite{An,Su} for the Dirichlet problem at infinity.
\end{remark}

\begin{definition}\label{BusemannPoisson}
If the Poisson kernel $P(x,\theta)$ is represented in the form $P(x,\theta)=\exp(-Q\,B_{\theta}(x))$,
we call it the Busemann--Poisson kernel,
where $Q= Q(X):=\lim\limits_{r\to\infty}\frac{1}{r}\log\,{\rm Vol}(B(x,r))$ is a constant, called the volume entropy of $X$  which represents exponential volume growth of geodesic spheres.
\end{definition}

 \begin{remark}
The family of  all Damek-Ricci spaces, which is a class of Hadamard manifolds, includes rank one symmetric spaces of non-compact type. Any Damek-Ricci space carries the Busemann--Poisson kernel. For this see \cite{IS1}.
\end{remark}

\begin{remark}\label{asymptoticallyharmonic}
It is necessary for a Hadamard manifold $X$ to carry the Busemann--Poisson kernel that
$X$ is asymptotically harmonic, i.e., any horosphere has a constant mean curvature $-Q$.
Moreover, from Definition \ref{poisson}, (iii) and Theorem \ref{visibility},
the Hadamard manifold $X$ also satisfies the visibility axiom.
\end{remark}
\begin{proposition}\label{Busemann-Poisson}
If the Poisson kernel $P(x,\theta)$ is the Busemann--Poisson kernel, in particular,
then the barycenter of the probability measure $\mu_x:=P(x,\theta)\,d\theta\in\mathcal{P}^+(\partial X)$ defined by $P(x,\theta)$, parametrized by $x\in X$ is the point $x$.
Hence, $\mathrm{bar} : \mathcal{P}^+(\partial X) \rightarrow X$ is surjective (see \cite{BCG, I}).
\end{proposition}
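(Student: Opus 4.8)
The plan is to verify directly that the point $x$ is a critical point of the $\mu_x$-averaged Busemann function $\mathbb{B}_{\mu_x}$. Recall (see the discussion following Definition \ref{linear}) that $x$ is a barycenter of $\mu\in\mathcal{P}^+(\partial X)$ precisely when $(d\mathbb{B}_\mu)_x(u)=\int_{\theta\in\partial X}(dB_\theta)_x(u)\,d\mu(\theta)=0$ for every $u\in T_xX$; and since $\mathbb{B}_\mu$ is convex (Proposition \ref{averaged}\,(i)), any critical point is automatically a minimizer, hence a barycenter, and under the standing uniqueness hypotheses it is \emph{the} barycenter. So it suffices to compute $(d\mathbb{B}_{\mu_x})_x$ and check that it vanishes.

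First I would differentiate under the integral sign, which is legitimate by the same reasoning as in the proof of Proposition \ref{averaged}\,(iv): $\partial X$ is compact, $\lvert\nabla B_\theta\rvert\equiv 1$, and $\theta\mapsto P(x,\theta)$ is continuous, hence bounded uniformly for $x$ in a fixed neighborhood. This gives, for every $u\in T_xX$,
\begin{equation*}
\begin{split}
(d\mathbb{B}_{\mu_x})_x(u)
&=\int_{\theta\in\partial X}(dB_\theta)_x(u)\,P(x,\theta)\,d\theta\\
&=\int_{\theta\in\partial X}\langle(\nabla B_\theta)_x,u\rangle\,\exp(-Q\,B_\theta(x))\,d\theta.
\end{split}
\end{equation*}
The key observation is that, up to the constant factor $-1/Q$, the integrand is the directional derivative of $P(\cdot,\theta)=\exp(-Q\,B_\theta(\cdot))$ in the direction $u$: differentiating the exponential gives $u\bigl[\exp(-Q\,B_\theta(\cdot))\bigr]=-Q\,\exp(-Q\,B_\theta(x))\,\langle(\nabla B_\theta)_x,u\rangle$. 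Substituting this back and again interchanging the derivative with the integral, one finds that $(d\mathbb{B}_{\mu_x})_x(u)$ equals $-1/Q$ times the directional derivative in the direction $u$ of the function $x'\mapsto\int_{\theta\in\partial X}P(x',\theta)\,d\theta$.

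But by condition (iii) of Definition \ref{poisson} the measure $P(x',\theta)\,d\theta$ is a probability measure for every $x'$, so that function is identically $1$ on $X$ and its directional derivative vanishes; hence $(d\mathbb{B}_{\mu_x})_x=0$. Therefore $x$ is the barycenter of $\mu_x$, i.e. $\mathrm{bar}(\mu_x)=x$, and surjectivity of $\mathrm{bar}$ follows immediately, since an arbitrary $x\in X$ is realized as $\mathrm{bar}(\mu_x)$ with $\mu_x=P(x,\theta)\,d\theta\in\mathcal{P}^+(\partial X)$. The argument is short, and I expect the only delicate points to be the two interchanges of integration with differentiation rather than any algebra. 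I note finally that harmonicity of the Poisson kernel is not used in the proof itself; it is rather what forces the constant in the exponent to be the volume entropy $Q$ (through asymptotic harmonicity), whereas the proof above rests only on the exponential form of the Busemann--Poisson kernel together with the normalization $\int_{\partial X}P(x,\theta)\,d\theta\equiv 1$.
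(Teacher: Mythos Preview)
Your argument is correct and is precisely the standard one: differentiating the identity $\int_{\partial X}\exp(-Q\,B_\theta(x))\,d\theta\equiv 1$ in $x$ yields $(d\mathbb{B}_{\mu_x})_x=0$, so $x$ is a critical point and hence, by convexity and the standing uniqueness assumption, the barycenter. The paper does not spell out a proof of this proposition but only cites \cite{BCG, I}; your write-up is exactly the computation those references carry out, so there is nothing further to compare.
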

\begin{theorem}[\cite{I}]
For the Busemann--Poisson kernel probability measure $\mu_x$, $x\in X$, the Hessian of  $\mu_x$-averaged Busemann function $(\nabla d{\Bbb B}_{\mu_x})_y$, $y\in X$ is positive definite.\end{theorem}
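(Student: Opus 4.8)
The plan is to reduce the assertion to Theorem~\ref{hessian}(i) by showing that a Hadamard manifold which carries the Busemann--Poisson kernel necessarily has strictly negative Ricci curvature at every point; once this is known, the positive definiteness of $(\nabla d\mathbb{B}_{\mu_x})_y$ follows at once, since $\mu_x\in\mathcal{P}^+(\partial X)$.

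First I would collect the standing facts. Since $X$ carries the Busemann--Poisson kernel $P(x,\theta)=\exp(-Q\,B_{\theta}(x))$, Remark~\ref{asymptoticallyharmonic} shows $X$ is asymptotically harmonic, so every horosphere has constant mean curvature $-Q$; in terms of the shape operator this reads $\operatorname{tr}S_{\theta,y}\equiv -Q$ for all $\theta\in\partial X$, $y\in X$. Moreover $Q>0$, for if $Q=0$ then $P\equiv 1$, contradicting the boundary-decay condition (iv) of Definition~\ref{poisson}. Asymptotic harmonicity also yields $d\Delta B_{\theta}\equiv 0$ and, via the Riccati equation, the two-sided bound $-(n-1)Q^{2}\le\operatorname{Ric}\le 0$ (Remark~\ref{remarkaverageBusemann}); hence the hypothesis of Theorem~\ref{hessian} — that each averaged Busemann function has its Hessian given by the averaging formula~\eqref{hessianaverageBusemann} — is satisfied. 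Finally $\mu_{x}=P(x,\theta)\,d\theta$ lies in $\mathcal{P}^{+}(\partial X)$ by Definition~\ref{poisson}(iii).

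The key step is the strict negativity of $\operatorname{Ric}$. Fix $y\in X$ and a unit vector $w\in S_{y}X$; let $\gamma$ be the geodesic with $\gamma(0)=y$, $\dot\gamma(0)=w$, put $\theta=\gamma(\infty)$, and let $S_{t}=S_{\theta,\gamma(t)}$ be the shape operator of the horosphere centered at $\theta$ through $\gamma(t)$, which satisfies the Riccati equation $S_{t}'+S_{t}^{2}+R_{t}=O$ with $R_{t}=R(\cdot,\dot\gamma(t))\dot\gamma(t)$. Taking traces on $\dot\gamma(t)^{\perp}$ and using $\operatorname{tr}S_{t}\equiv -Q$ (hence $\tfrac{d}{dt}\operatorname{tr}S_{t}=0$) together with $R(\dot\gamma,\dot\gamma)\dot\gamma=0$ (hence $\operatorname{tr}R_{t}=\operatorname{Ric}(\dot\gamma(t),\dot\gamma(t))$), one gets $\operatorname{Ric}(\dot\gamma(t),\dot\gamma(t))=-\operatorname{tr}(S_{t}^{2})=-\|S_{t}\|^{2}\le 0$, with equality only when $S_{t}=0$. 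But $\operatorname{tr}S_{0}=-Q\neq 0$ forces $S_{0}\neq 0$, so $\operatorname{Ric}_{y}(w)<0$; since $y$ and $w$ were arbitrary, $\operatorname{Ric}<0$ on all of $X$.

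With $\operatorname{Ric}<0$ everywhere and $\mu_{x}\in\mathcal{P}^{+}(\partial X)$, Theorem~\ref{hessian}(i) applies and gives that $(\nabla d\mathbb{B}_{\mu_{x}})_{y}$ is positive definite for every $y\in X$, which is the claim. (Alternatively, having positive definiteness for a single $\mu_x$ one could invoke Theorem~\ref{hessian}(ii).) I expect the crux to be the trace computation in the previous paragraph: it is precisely the $t$-independence of the mean curvature of horospheres that upgrades the a priori bound $\operatorname{Ric}\le 0$ to strict negativity; everything else is bookkeeping with the definitions and a direct appeal to Theorem~\ref{hessian}(i).
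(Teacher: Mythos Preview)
Your argument is correct, but it follows a different path from the paper's. The paper does not pass through the Ricci curvature at all: instead it differentiates the normalization condition $\int_{\partial X}\exp(-Q\,B_{\theta}(x))\,d\theta=1$ twice in $x$ to obtain the explicit identity
\[
(\nabla d\mathbb{B}_{\mu_x})_x(u,u)=Q\,G_{\mu_x}\bigl(\nu_x^{\mu_x}(u),\nu_x^{\mu_x}(u)\bigr),
\]
which is positive for $u\neq 0$ by the injectivity of $\nu_x^{\mu_x}$ and the positive definiteness of the Fisher metric. Positive definiteness at an arbitrary point $y$ is then obtained by the pointwise comparison argument of Theorem~\ref{hessian}(ii), using $\mu_y$ as the reference measure at $y$. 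Your route---tracing the Riccati equation together with the constancy of $\operatorname{tr}S_{t}\equiv -Q$ to force $\operatorname{Ric}<0$, then invoking Theorem~\ref{hessian}(i)---is more curvature-geometric and yields the stronger intermediate fact that $\operatorname{Ric}$ is strictly negative on an asymptotically harmonic Hadamard manifold with $Q>0$; the paper's route, on the other hand, produces an explicit link between the Hessian of $\mathbb{B}_{\mu_x}$ and the Fisher metric via $\nu_x^{\mu_x}$, which fits the information-geometric theme and is of independent interest.
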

In fact, we have
$$
(\nabla d {\Bbb B}_{\mu_x})_x(u,u) = Q\,G_{\mu_x}(\nu_x^{\mu_x}(u), \nu_x^{\mu_x}(u)),\quad
u\in T_xX,\ x\in X.
$$ From an argument similar to the proof of Theorem \ref{hessian} (ii), we have for any $y \in X$
 $$
(\nabla d {\Bbb B}_{\mu_y})_x(u,u) > 0
,\quad
u\in T_xX,\ u\not=0,\, x\in X.
$$ 

From the above theorem, if a Hadamard manifold $X$ carrying the Busemann--Poisson kernel satisfies the assumption of Theorem \ref{exist}, namely Hypothesis \ref{hypo}, then, the function $\theta\mapsto B_{\theta}(\cdot)$ is continuous on $\partial X$ so that from Theorem \ref{hessian} (ii), any $\mu\in\mathcal{P}^+(\partial X)$ has a unique barycenter.

\begin{remark}
Let $\mu,\, \mu_1\in{\rm bar}^{-1}(x)$.  Then the path $\mu(t):=(1-t)\mu+t\mu_1$ belongs to ${\rm bar}^{-1}(x)$\hspace{2mm} for any $t\in[0,1]$. The curve $t \mapsto \mu(t)$, $t\in[0,1]$ yields a geodesic with respect to the $m$-connection $\nabla^{(-1)}$.
\end{remark}

\begin{proposition}\label{belonging}[\cite{I}]
Let $t\mapsto\mu(t)\in\mathcal{P}^+(\partial X)$ be a geodesic with respect to the Levi-Civita connection.
Then, for any $t$\,  $\mu(t)\in\mathrm{bar}^{-1}(x)$ if and only if the following are fulfilled:
\begin{enumerate}
\item $\mu(0)\in{\rm bar}^{-1}(x)$.
\item $\dot{\mu}(0)\in T_{\mu(0)}{\rm bar}^{-1}(x)$.
\item $h_{\mu(0)}(\dot{\mu}(0),\dot{\mu}(0))=0$, where $h$ is the second fundamental form of the submanifold $\mathrm{bar}^{-1}(x)$,
$$
h_{\mu} : T_{\mu}{\rm bar}^{-1}(x) \times T_{\mu}{\rm bar}^{-1}(x) \rightarrow N_{\mu} = \mathrm{Im}\, \nu_x^{\mu};\ (\alpha,\beta) \mapsto (\nabla_{\alpha} \beta)^{\perp}.
$$
Here $(\nabla_{\alpha}\beta)^{\perp}$ is the normal component of a vector $\nabla_{\alpha}\beta$.
\end{enumerate}
\end{proposition}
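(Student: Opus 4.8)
The plan is to trade the family of constraints ``$\mu(t)\in\mathrm{bar}^{-1}(x)$ for all $t$'' for three scalar conditions at $t=0$, using the explicit description of Fisher geodesics from Theorem~\ref{geodesic}. Put $x=\mathrm{bar}(\mu(0))$ and, for each $u\in T_xX$, write $\psi_u(\theta):=(dB_\theta)_x(u)$, which is continuous on $\partial X$ by Hypothesis~\ref{hypo}, and set $g_u(t):=\int_{\partial X}\psi_u(\theta)\,d\mu(t)(\theta)$. Since a point is the barycenter of a measure $\nu$ exactly when $\int_{\partial X}(dB_\theta)_x(u)\,d\nu(\theta)=0$ for every $u$, one has
\[
\mu(t)\in\mathrm{bar}^{-1}(x)\ \text{for all }t
\quad\Longleftrightarrow\quad
g_u\equiv 0\ \text{for all }u\in T_xX .
\]
Writing $\dot\mu(0)=\omega\,\bar\tau$ with $\omega=|\dot\mu(0)|_{\mu(0)}$ and $|\bar\tau|_{\mu(0)}=1$, the half-angle expansion of \eqref{56} shows that the density of $\mu(t)$ has the form $p(\theta)+q(\theta)\cos\omega t+r(\theta)\sin\omega t$ for suitable continuous functions $p,q,r$, so that $g_u(t)=A_u+B_u\cos\omega t+C_u\sin\omega t$. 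Such a function is identically zero if and only if $g_u(0)=\dot g_u(0)=\ddot g_u(0)=0$, the degenerate case $\omega=0$, in which $\mu(t)\equiv\mu(0)$, being trivial. It thus suffices to identify these three families of conditions, quantified over $u$, with (i), (ii), (iii) respectively.

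First, $g_u(0)=0$ for all $u$ is literally $(d\mathbb{B}_{\mu(0)})_x=0$, i.e.\ (i). For the first derivative, the identity $d\nu_x^{\mu}(u)/d\mu=\psi_u$ (see Definition~\ref{linear}) gives $\dot g_u(t)=\int_{\partial X}\psi_u\,d\dot\mu(t)$, and once (i) holds, so that $\nu_x^{\mu(0)}(u)\in T_{\mu(0)}\mathcal{P}^+(\partial X)$, this equals $G_{\mu(0)}\bigl(\dot\mu(0),\nu_x^{\mu(0)}(u)\bigr)$ at $t=0$; hence $\dot g_u(0)=0$ for all $u$ says exactly that $\dot\mu(0)$ is $G_{\mu(0)}$-orthogonal to $\mathrm{Im}\,\nu_x^{\mu(0)}$, which by the orthogonal splitting of Proposition~\ref{fibre} is condition (ii). For the second derivative, I would write $g_u(t)=\Psi_u(\mu(t))$ for the function $\Psi_u:\mathcal{P}^+(\partial X)\to\mathbb{R}$, $\Psi_u(\nu):=\int_{\partial X}\psi_u\,d\nu$; since $\mu(t)$ is a geodesic, $\ddot g_u(0)=(\nabla d\Psi_u)_{\mu(0)}(\dot\mu(0),\dot\mu(0))$. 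Now $\Psi_u$ vanishes identically on $\mathrm{bar}^{-1}(x)$ and its ambient gradient at each point $\mu$ of that fibre equals $\nu_x^{\mu}(u)$, which is normal to the fibre; so the Gauss-type decomposition of the Hessian of a function vanishing on a submanifold yields, for $\dot\mu(0)\in T_{\mu(0)}\mathrm{bar}^{-1}(x)$ (legitimate once (ii) is known),
\[
\ddot g_u(0)=-\,G_{\mu(0)}\bigl(\nu_x^{\mu(0)}(u),\ h_{\mu(0)}(\dot\mu(0),\dot\mu(0))\bigr).
\]
Because $h_{\mu(0)}(\dot\mu(0),\dot\mu(0))$ lies in $N_{\mu(0)}=\mathrm{Im}\,\nu_x^{\mu(0)}$, vanishing of $\ddot g_u(0)$ for all $u$ is equivalent to $h_{\mu(0)}(\dot\mu(0),\dot\mu(0))=0$, i.e.\ (iii). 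Assembling the three equivalences proves the ``if'' part; the ``only if'' part then follows at once, since $g_u\equiv0$ yields (i) and (ii) as above, while $\nabla_{\dot\mu}\dot\mu=0$ together with the Gauss formula forces the normal component $h_{\mu(0)}(\dot\mu(0),\dot\mu(0))$ of $\nabla_{\dot\mu}\dot\mu$ to vanish.

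The step I expect to demand the most care is the Hessian-of-a-restricted-function identity in this infinite-dimensional setting: one must verify that $\Psi_u$ is $C^2$ on $(\mathcal{P}^+(\partial X),G)$ and that $\mathrm{bar}^{-1}(x)$ is genuinely an embedded submanifold carrying a well-defined second fundamental form $h$, so that the Gauss decomposition is available. Granting the submanifold structure and the regularity of $\mathrm{bar}$ already invoked for Proposition~\ref{fibre}, what remains is the routine half-angle expansion of \eqref{56} and the two bookkeeping identities for $\dot g_u$ and $\ddot g_u$.
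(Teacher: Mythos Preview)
The paper does not prove this proposition; it is stated with a bare citation to \cite{I} and no argument is supplied in the text, so there is no proof against which to compare your approach.

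Your argument is correct. The decisive observation is that the explicit geodesic formula \eqref{56}, after the half-angle expansion, makes each $g_u(t)$ a first-order trigonometric polynomial $A_u+B_u\cos\omega t+C_u\sin\omega t$, so that identical vanishing on an interval is equivalent to $g_u(0)=\dot g_u(0)=\ddot g_u(0)=0$; your subsequent identification of these three scalar families with (i), (ii), (iii) via the Fisher pairing and the Gauss decomposition of the Hessian of $\Psi_u$ is clean and accurate. The caveat you raise about the submanifold structure of $\mathrm{bar}^{-1}(x)$ and the $C^2$-regularity of $\Psi_u$ is well taken, but it is not a defect relative to the surrounding text: Proposition~\ref{fibre} and the very formulation of the second fundamental form $h$ in the statement already presuppose that structure, so you are working at the same level of formality as the paper. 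If you want to bypass the Hessian altogether, note that $\Psi_u$ is affine-linear on $\mathcal{P}^+(\partial X)$, so $\ddot g_u(0)=\int_{\partial X}\psi_u\,d\ddot\mu^{\mathrm{aff}}(0)$, and the geodesic equation combined with Theorem~\ref{Levicivita} gives $\ddot\mu^{\mathrm{aff}}(0)=\tfrac12\bigl((d\dot\mu(0)/d\mu(0))^2-|\dot\mu(0)|_{\mu(0)}^2\bigr)\mu(0)$; under (i) the constant term integrates to zero and the remaining term is exactly $-G_{\mu(0)}\bigl(\nu_x^{\mu(0)}(u),(\nabla_{\dot\mu(0)}\dot\mu(0))\bigr)$ computed from the same Levi-Civita formula, which recovers your identity without invoking an abstract Hessian.
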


\begin{remark}
The barycenter of the standard probability measure $\lambda = d \theta$ is the base point $x_0$ (Remark \ref{basepoint}).
By the identification $\partial X\cong S_{x_0}X$, we set $\tau = q(v) d\theta\, \in T_{d\theta}\mathcal{P}^+(\partial X)$ for an arbitrary unit tangent vector $v \in S_{x_0}X$ by $q(v)= v^i v^j$,\, $i,j = 1,\cdots,n$, $i \not= j$.
Here ${\{v^i, 1\leq i\leq n\}}$ is the components of $v$ with respect to a certain orthonormal basis ${\{\bf e}_i\}$ of $T_{x_0}X$; $v = \sum_{i=1}^n v^i {\bf e}_i$.
Since $\tau$ satisfies (ii) and (iii) of Proposition \ref{belonging}, the geodesic with initial velocity vector $\tau$ belongs to $\mathrm{bar}^{-1}(x_0)$.
\end{remark}

\begin{theorem}\label{geodesicinfiber}
Let $\mu$, $\mu_1 \in {\rm bar}^{-1}(x)$, $x\in X$.
A geodesic with respect to the Levi-Civita connection joining $\mu$ and $\mu_1$ is contained in $\mathrm{bar}^{-1}(x)$ if and only if $\sigma(\mu,\mu_1)\in \mathrm{bar}^{-1}(x)$ holds.
Here ${\sigma}(\mu,\mu_1)$ is the normalized geometric mean (see Definition \ref{geometricmean} in \S 2.2).
\end{theorem}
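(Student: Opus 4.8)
The plan is to exploit the explicit description of the geodesic segment from Theorem \ref{geodesic-x} together with the fiber criterion of Proposition \ref{belonging}. Recall that the geodesic $\mu(t)$, $t\in[0,\ell]$, joining $\mu$ and $\mu_1$ lies on the $2$-simplex $\Delta^2(\mu,\mu_1,\sigma(\mu,\mu_1))$, being a convex combination $\mu(t)=a(t)\,\mu+b(t)\,\mu_1+c(t)\,\sigma(\mu,\mu_1)$ with $a(t)+b(t)+c(t)=1$ and $a,b,c\geq 0$. Since membership in ${\rm bar}^{-1}(x)$ is governed by the vanishing of $\int_{\partial X}(dB_\theta)_x(u)\,d\mu(t)(\theta)$ for all $u\in T_xX$, and this integral is \emph{linear} in the measure, the criterion $(d\mathbb{B}_{\mu(t)})_x\equiv 0$ becomes an affine condition on the triple $(\mu,\mu_1,\sigma(\mu,\mu_1))$.

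First I would prove the easy direction. If the geodesic segment is contained in ${\rm bar}^{-1}(x)$, then in particular $\mu(t)\in{\rm bar}^{-1}(x)$ for all $t\in[0,\ell]$; using the Remark after Theorem \ref{geodesic-x}, the tangent line at $\mu$ passes through $\sigma(\mu,\mu_1)$, i.e.\ $\sigma(\mu,\mu_1)=\mu+\tan(\ell/2)\,\dot\mu(0)$. Since $\dot\mu(0)\in T_\mu{\rm bar}^{-1}(x)$ and ${\rm bar}^{-1}(x)$ is a submanifold of $\mathcal{P}^+(\partial X)$, one checks directly from the affine relation $\mu(t)=a(t)\mu+b(t)\mu_1+c(t)\sigma(\mu,\mu_1)$ (solving for $\sigma$ in terms of two interior points $\mu(t_1),\mu(t_2)$ and the endpoints, all of which satisfy the affine condition $(d\mathbb{B}_\bullet)_x=0$) that $\sigma(\mu,\mu_1)$ satisfies the same condition, hence lies in ${\rm bar}^{-1}(x)$.

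For the converse, suppose $\mu,\mu_1,\sigma(\mu,\mu_1)$ all lie in ${\rm bar}^{-1}(x)$. By linearity of $\tau\mapsto\int_{\partial X}(dB_\theta)_x(u)\,d\tau(\theta)$ in $\tau$, every convex combination $a\mu+b\mu_1+c\,\sigma(\mu,\mu_1)$ with $a+b+c=1$ again satisfies $(d\mathbb{B}_\bullet)_x=0$; in particular $(d\mathbb{B}_{\mu(t)})_x=0$ for every $t\in[0,\ell]$. Since $X$ satisfies the standing hypotheses ensuring uniqueness of barycenters (Hypothesis \ref{hypo} plus positive definiteness of $\nabla d\mathbb{B}_\mu$), the vanishing of $(d\mathbb{B}_{\mu(t)})_x$ forces $x={\rm bar}(\mu(t))$, i.e.\ $\mu(t)\in{\rm bar}^{-1}(x)$ for all $t\in[0,\ell]$. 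Alternatively, one can package the ``only if'' direction through Proposition \ref{belonging}: conditions (i)–(iii) there translate, via $\sigma(\mu,\mu_1)=\mu+\tan(\ell/2)\,\dot\mu(0)$ and the Levi-Civita formula \eqref{54}, precisely into $\sigma(\mu,\mu_1)\in{\rm bar}^{-1}(x)$, since (iii) (the vanishing of the second fundamental form in the radial direction $\dot\mu(0)$) combined with (i)–(ii) is equivalent to the normalized geometric mean lying in the fiber.

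The main obstacle is the bookkeeping that converts the second-fundamental-form condition (iii) of Proposition \ref{belonging} into the clean statement ``$\sigma(\mu,\mu_1)\in{\rm bar}^{-1}(x)$''. Concretely, one must show that $h_{\mu(0)}(\dot\mu(0),\dot\mu(0))=0$ is equivalent to the horizontal (normal) component of $\sigma(\mu,\mu_1)-\mu$ vanishing; this uses the identity $\sigma(\mu,\mu_1)-\mu=\tan(\ell/2)\,\dot\mu(0)$ together with the connection formula \eqref{54} applied to the constant vector field extending $\dot\mu(0)$, and the fact that the normal bundle of ${\rm bar}^{-1}(x)$ is ${\rm Im}\,\nu_x^\mu$ (Proposition \ref{fibre}). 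The linearity-in-the-measure observation sidesteps most of this on the ``if'' side, so the real work is confined to making the ``only if'' direction rigorous — i.e.\ verifying that if the whole geodesic lies in the fiber then so does $\sigma(\mu,\mu_1)$, which follows once one notes that $\sigma(\mu,\mu_1)$ is itself an affine combination of $\mu$ and two interior points of the segment.
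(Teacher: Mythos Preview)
Your proposal is correct and takes essentially the same route as the paper: both directions follow immediately from the representation formula of Theorem~\ref{geodesic-x}, $\mu(t)=a(t)\mu+b(t)\mu_1+c(t)\sigma(\mu,\mu_1)$ with $a+b+c=1$, together with the fact that the fiber condition $\int_{\partial X}(dB_\theta)_x(u)\,d\nu(\theta)=0$ is linear in $\nu$. For the ``only if'' direction a single interior point suffices (since $c(t)>0$ for $t\in(0,\ell)$, one solves $\sigma=\tfrac{1}{c(t)}(\mu(t)-a(t)\mu-b(t)\mu_1)$), so the detour through Proposition~\ref{belonging} and the second fundamental form is unnecessary; the linearity argument already closes both directions with no further bookkeeping.
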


Theorem \ref{geodesicinfiber} is immediate from Theorem \ref{geodesic-x},  which expresses the representation formula for a geodesic in terms of the normalized geometric mean.
See \cite{IS-8} for details.

Let $\Phi : \partial X\to\partial X$ be a homeomorphism and $\Phi_{\sharp}:\mathcal{P}^+(\partial X)\to\mathcal{P}^+(\partial X)$ be the push-forward by $\Phi$.
We recall that $\Phi_{\sharp}$ is isometric with respect to the Fisher metric $G$ (see Theorem \ref{pushisometry} in \S \ref{2.2}).

\begin{definition}[\cite{I}]
Let $\phi:X\to X$ be a bijective map.
We call $\phi$ a map barycentrically associated to $\Phi$, when $\phi$ satisfies $\mathrm{bar}\circ\Phi_{\sharp}=\phi\circ\mathrm{bar}$.
\end{definition}

Let $\phi$ be an isometry of $X$ and $\widehat{\phi}:\partial X\to\partial X$ be the homeomorphism induced by $\phi$.
Then, $\phi$ is a map barycentrically associated to $\widehat{\phi}$ (see Proposition \ \ref{cocycle2}).

\begin{definition}
We define a map $\Theta:X\to\mathcal{P}^+(\partial X)$ by $\Theta(x) := \mu_x (\,= P(x,\theta)\,d\theta\,)$, $x\in X$.
We call this map the Poisson kernel map.
\end{definition}
In what follows, we assume that a Hadamard manifold $X$ satisfies Hypothesis \ref{hypo} and carries the Busemann-Poisson kernel.
Then, we recall that $\mathrm{bar}(\mu_x) = x$ (see Proposition \ref{Busemann-Poisson}).
Hence, the map $\Theta$ satisfies $\mathrm{bar}\circ\Theta={\rm id}_X$, i.e., $\Theta$ is a section of the projection $\mathrm{bar} : \mathcal{P}^+(\partial X) \rightarrow X$.
Its differential map satisfies $(d\Theta)_x = - Q\, \nu_x^{\mu_x}$ for any $x\in X$.

We can show that any isometry $\varphi$ of $X$ satisfies $\Theta\circ \varphi = {\hat\varphi}_{\sharp}\circ \Theta$.
In relation with this fact, we have the following theorem.
\begin{theorem}[\cite{IS4, IS6, IS5}]\label{Poissonmap}
Let $\Phi : \partial X\to\partial X$ be a homeomorphism of $\partial X$ and $\phi : X \rightarrow X$ be a bijective $C^1$-map which is barycentrically associated to $\Phi$.
If the maps  $\Phi$ and $\phi$ admit a relation $\Theta\circ\phi = \Phi_{\sharp}\circ\Theta$,
then $\phi$ turns out to be an isometry of $X$.
Moreover, the transformation $\hat{\phi}$ of $\partial X$ canonically induced by $\phi$ coincides with $\Phi$.
\end{theorem}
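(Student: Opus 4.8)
The plan is to combine three ingredients already available in the paper: the section property $\mathrm{bar}\circ\Theta=\mathrm{id}_X$ of the Poisson kernel map under our standing hypotheses, the intertwining relation $\Theta\circ\phi=\Phi_\sharp\circ\Theta$ in the statement, and the homothety/harmonicity characterization of $\Theta$ in Theorem~\ref{poissonmap}. First I would observe that from $\mathrm{bar}\circ\Theta=\mathrm{id}_X$ and $\mathrm{bar}\circ\Phi_\sharp=\phi\circ\mathrm{bar}$ (the definition of $\phi$ being barycentrically associated to $\Phi$) together with $\Theta\circ\phi=\Phi_\sharp\circ\Theta$, one gets a closed loop: applying $\mathrm{bar}$ to both sides of $\Theta\circ\phi=\Phi_\sharp\circ\Theta$ recovers $\phi=\phi$, which is only consistent, not yet informative. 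The real content comes from differentiating $\Theta\circ\phi=\Phi_\sharp\circ\Theta$. Since $\Phi_\sharp$ is an isometry of $(\mathcal P^+(\partial X),G)$ by Theorem~\ref{pushisometry}, we have for any $x\in X$ and $u\in T_xX$
\[
G_{\Theta(\phi(x))}\bigl((d\Theta)_{\phi(x)}(d\phi_x u),\,(d\Theta)_{\phi(x)}(d\phi_x u')\bigr)
= G_{\Theta(x)}\bigl((d\Theta)_x u,\,(d\Theta)_x u'\bigr).
\]
By Theorem~\ref{poissonmap}(i)—valid because $X$ carries the Busemann–Poisson kernel, hence is asymptotically harmonic, and (under Hypothesis~\ref{hypo}) the relevant geometric hypotheses hold—$\Theta$ is a homothety with factor $C=Q/n$, so $G_{\Theta(y)}((d\Theta)_y v,(d\Theta)_y v')=\tfrac{Q}{n}\,g_y(v,v')$ for all $y$. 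Substituting this on both sides of the displayed identity yields $\tfrac{Q}{n}\,g_{\phi(x)}(d\phi_x u,d\phi_x u')=\tfrac{Q}{n}\,g_x(u,u')$, i.e. $\phi^\ast g=g$: $\phi$ is a Riemannian isometry. (Here one uses $Q>0$, which holds since a Hadamard manifold with the Busemann–Poisson kernel has $-Q$ as the constant mean curvature of horospheres, a nonzero quantity.)

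Second, having shown $\phi$ is an isometry, it induces a boundary homeomorphism $\hat\phi:\partial X\to\partial X$ via \eqref{homeo}, and by Proposition~\ref{cocycle2} it satisfies $\mathrm{bar}\circ\hat\phi_\sharp=\phi\circ\mathrm{bar}$. The remaining task is to identify $\hat\phi$ with $\Phi$. For this I would use that $\Theta$ is injective (it is a section of $\mathrm{bar}$, hence injective) and that, as noted in the excerpt just before the theorem, an isometry $\varphi$ of $X$ satisfies $\Theta\circ\varphi=\hat\varphi_\sharp\circ\Theta$. Applying this to $\varphi=\phi$ gives $\Theta\circ\phi=\hat\phi_\sharp\circ\Theta$, while the hypothesis gives $\Theta\circ\phi=\Phi_\sharp\circ\Theta$. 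Hence $\hat\phi_\sharp\circ\Theta=\Phi_\sharp\circ\Theta$ on all of $X$; since $\Theta(X)$ is a section, the pushforwards $\hat\phi_\sharp$ and $\Phi_\sharp$ agree on the image $\Theta(X)=\{P(\cdot,\theta)\,d\theta\}$. Finally, two homeomorphisms $\psi_1,\psi_2$ of $\partial X$ with $(\psi_1)_\sharp=(\psi_2)_\sharp$ on even a single measure of full support (such as $\mu_{x_0}=P(x_0,\cdot)\,d\theta=d\theta$, which has positive density) must coincide as maps of $\partial X$: indeed $(\psi_i)_\sharp(f\,d\theta)=(\psi_i\circ\;)^{-1}$-transport of the density, and equality of the transported measures for the explicit densities $P(x,\theta)$, whose $x$-dependence separates points of $\partial X$ by condition (iv) of Definition~\ref{poisson}, forces $\psi_1=\psi_2$ pointwise. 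Therefore $\hat\phi=\Phi$.

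The main obstacle I anticipate is the rigorous justification that $\Theta$ genuinely satisfies the full homothety conclusion of Theorem~\ref{poissonmap}(i) in the present generality: that theorem is stated for Damek–Ricci spaces, whereas here $X$ is only assumed to carry the Busemann–Poisson kernel and satisfy Hypothesis~\ref{hypo}. One must check that $\Theta^\ast G=(Q/n)\,g$ still holds under these weaker hypotheses—this is where the computation $(d\Theta)_x=-Q\,\nu_x^{\mu_x}$ and the identity $(\nabla d\mathbb B_{\mu_x})_x(u,u)=Q\,G_{\mu_x}(\nu_x^{\mu_x}(u),\nu_x^{\mu_x}(u))$ from the theorem preceding Proposition~\ref{belonging}, combined with asymptotic harmonicity forcing $(\nabla d\mathbb B_{\mu_x})_x$ to be a multiple of $g_x$, do the work. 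A secondary delicate point is the measure-theoretic step identifying $\hat\phi$ with $\Phi$: one needs the separation property of the Poisson kernel (Definition~\ref{poisson}(iv)) to pass from equality of pushed-forward measures to pointwise equality of the boundary maps, rather than merely equality almost everywhere. Both points are handled by invoking results already in the paper together with the defining properties of the Poisson kernel, so no genuinely new estimate is required; the argument is essentially a diagram chase upgraded to the level of Riemannian metrics via the isometry property of $\Phi_\sharp$ and the homothety property of $\Theta$.
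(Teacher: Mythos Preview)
Your argument hinges on the homothety $\Theta^\ast G = (Q/n)\,g$, and you correctly flag this as the main obstacle. The trouble is that your proposed resolution does not close the gap. Theorem~\ref{poissonmap}\,(i) establishes the homothety only for Damek--Ricci spaces, whereas Theorem~\ref{Poissonmap} is stated under the standing hypothesis that $X$ is a Hadamard manifold satisfying Hypothesis~\ref{hypo} and carrying the Busemann--Poisson kernel, which is strictly weaker. Your fix invokes asymptotic harmonicity to force $(\nabla d\mathbb{B}_{\mu_x})_x$ to be a scalar multiple of $g_x$, but asymptotic harmonicity only says that every horosphere has constant mean curvature $-Q$, i.e.\ $\mathrm{trace}_g\,(\nabla dB_\theta)_x = Q$ for all $\theta$; integrating against $\mu_x$ yields only $\mathrm{trace}_g\,(\nabla d\mathbb{B}_{\mu_x})_x = Q$, a constraint on the trace, not on the full bilinear form. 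Without the homothety, your chain of equalities produces only $\phi^\ast(\Theta^\ast G)=\Theta^\ast G$, i.e.\ $\phi$ is an isometry of the auxiliary metric $\Theta^\ast G = Q\,(\nabla d\mathbb{B}_{\mu_\bullet})_\bullet$, which need not coincide with $g$.

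The paper proceeds differently: it derives Theorem~\ref{Poissonmap} as a corollary of the subsequent, more general theorem (the one with the two commuting squares involving an arbitrary section $\Sigma$ and the maps $\nu_x^{\mu_x}$), whose proof rests on geometric properties of the normalized Busemann function rather than on any homothety of $\Theta$. The reduction is immediate once one notes $(d\Theta)_x = -Q\,\nu_x^{\mu_x}$, so that the hypothesis $\Theta\circ\phi=\Phi_\sharp\circ\Theta$ gives both required diagrams with $\Sigma=\Theta$. Your approach, by contrast, would be a clean and elegant shortcut \emph{in the Damek--Ricci case}, where the homothety is available; but as written it does not cover the generality claimed. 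The identification $\hat\phi=\Phi$ in your second step is essentially fine, though the cleanest way to finish is to observe that $B_{\psi^{-1}(\theta)}(x)=B_\theta(x)$ for all $x$ forces $\psi^{-1}(\theta)=\theta$ via Theorem~\ref{visibility}, rather than appealing to Definition~\ref{poisson}\,(iv).
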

\begin{remark}
If we assume singly $\Theta\circ\phi=\Phi_{\sharp}\circ\Theta$ ,
it follows then that $\mu_{\phi(x)}=\Theta(\phi(x))=\Phi_{\sharp}\Theta(x)=\Phi_{\sharp}\mu_x$  
and hence $\phi(x)={\rm bar}(\mu_{\phi(x)})={\rm bar}(\Phi_{\sharp}\mu_x)$ from which
we find that the barycenter of $\Phi_{\sharp}\mu_x$ is $\phi(x)$.
\end{remark}

\begin{remark}
In general, we define for a homeomorphism $\Phi:\partial X\to\partial X$ a map $\phi : X\to X$ by $\phi(x):={\rm bar}(\Phi_{\sharp}\mu_x)$,\, $x\in X$. It is shown that when $X$ is a real hyperbolic space,  the map $\phi$ is of $C^1$-class and satisfies
$$
\mathrm{Jac}\,\phi_x :=\sqrt{{\rm det}((d \phi_x)^{\ast}(d \phi_x))} \leq 1.
$$
 Moreover, equality in the above holds if and only if the differential map $d \phi_x:T_xX\to T_{\phi(x)}X$ is a linear isometry (see \cite{BCG, BCG2, Sau, N}).
\end{remark}

We can  weaken the assumption of Theorem \ref{Poissonmap} in the following way.
\begin{theorem}[\cite{IS6,I}]
Let $\Phi : \partial X \rightarrow \partial X$ be a homeomorphism of $\partial X$ and $\varphi : X \rightarrow X$ be a map of $C^1$-class which is barycentrically associated to $\Phi$.
Suppose that there exists a section $\Sigma : X \rightarrow \mathcal{P}^+(\partial X)$ of the fiber space structure induced by the barycenter map $\mathrm{bar} : \mathcal{P}^+(\partial X) \rightarrow X$, i.e., a map $\Sigma$ satisfying ${\rm bar}\circ \Sigma = \mathrm{id}_{X}$, such that each of the following two diagrams commutes:\\
\begin{minipage}{0.5\hsize}
$$
\begin{CD}
\mathcal{P}^+(\partial X) @>\Phi_{\sharp} >> \mathcal{P}^+(\partial X) \\
@A \Sigma AA @AA \Sigma A \\
X @>>\varphi >X
\end{CD}
$$
\end{minipage}
\begin{minipage}{0.4\hsize}
$$
\begin{CD}
T_{\mu_x}\mathcal{P}^+(\partial X)@> { (d}\Phi_{\sharp}{ )_{\mu_x} } >> T_{\mu_{\varphi { (}x{ )} }}\mathcal{P}^+(\partial X) \\
@A \nu_x^{\mu_x} AA @AA \nu_{\varphi{ (} x{ )} }^{\mu_{\varphi { (}x{ )} }} A\\
T_xX @>> d \varphi_x> T_{\varphi { (}x{ )} }X
\end{CD}
$$
\end{minipage}\\
Here $\mu_x := \Sigma(x)\in\mathcal{P}^+(\partial X)$.
Then, $\varphi$ is an isometry of $X$ and the transformation $\widehat{\varphi}$ of $\partial X$ induced by $\varphi$ coincides with $\Phi$.
\end{theorem}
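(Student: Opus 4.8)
The plan is to extract from the two commuting diagrams the single pointwise identity
\begin{equation*}
(dB_{\theta})_{\varphi(x)}(d\varphi_x u)=(dB_{\Phi^{-1}(\theta)})_x(u)\qquad(\forall\,\theta\in\partial X,\ u\in T_xX,\ x\in X),\tag{$\dagger$}
\end{equation*}
and then to read off everything from $(\dagger)$ using only the elementary properties of normalized Busemann functions collected in \S\ref{4}. To derive $(\dagger)$, recall that a tangent vector of $\mathcal{P}^+(\partial X)$ is a zero-mass signed measure that is absolutely continuous, with continuous density, with respect to the base measure; that the push-forward acts on tangent vectors by $(d\Phi_{\sharp})_{\mu}(\tau)=\Phi_{\sharp}(\tau)$; and that, by Definition \ref{linear}, the density of $\nu_x^{\mu}(u)$ with respect to $\mu$ is $\theta\mapsto(dB_{\theta})_x(u)$, which integrates to $0$ against $\mu_x=\Sigma(x)$ precisely because $\mathrm{bar}(\mu_x)=x$. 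Using the first diagram in the form $\Phi_{\sharp}\mu_x=\mu_{\varphi(x)}$, a change-of-variables computation gives $\Phi_{\sharp}\bigl(\nu_x^{\mu_x}(u)\bigr)=\bigl((dB_{\Phi^{-1}(\theta)})_x(u)\bigr)\,d\mu_{\varphi(x)}(\theta)$, while the left-hand side of the second diagram is $\nu_{\varphi(x)}^{\mu_{\varphi(x)}}(d\varphi_x u)=\bigl((dB_{\theta})_{\varphi(x)}(d\varphi_x u)\bigr)\,d\mu_{\varphi(x)}(\theta)$; since $\mu_{\varphi(x)}$ has an everywhere-positive continuous density, equality as measures forces equality of densities at every $\theta$, which is $(\dagger)$.

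Next I would show $\varphi$ is a Riemannian isometry. Rewrite $(\dagger)$ as $\langle(d\varphi_x)^{\mathrm T}(\nabla B_{\theta})_{\varphi(x)},u\rangle=\langle(\nabla B_{\Phi^{-1}(\theta)})_x,u\rangle$ for the metric adjoint $(d\varphi_x)^{\mathrm T}:T_{\varphi(x)}X\to T_xX$, so that $(d\varphi_x)^{\mathrm T}(\nabla B_{\theta})_{\varphi(x)}=(\nabla B_{\Phi^{-1}(\theta)})_x$. By property (v) both sides are unit vectors, and by property (vi) the vectors $(\nabla B_{\theta})_{\varphi(x)}$ exhaust the unit sphere of $T_{\varphi(x)}X$ as $\theta$ runs over $\partial X$; hence $(d\varphi_x)^{\mathrm T}$ maps the unit sphere of $T_{\varphi(x)}X$ into that of $T_xX$. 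A linear map with this property is injective, hence bijective (the spaces have equal dimension $n$), and norm-preserving by homogeneity; so $(d\varphi_x)^{\mathrm T}$, and therefore $d\varphi_x$, is a linear isometry for every $x$. Since $\varphi$ is a bijective $C^1$-map whose differential is everywhere a linear isometry, it preserves the length of curves, hence distances, and by the Myers--Steenrod theorem $\varphi$ is a Riemannian isometry of $X$ (alternatively, $\varphi$ is a local isometry of the simply connected $X$, hence global).

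Finally, to identify $\widehat{\varphi}$ with $\Phi$: since $X$ is connected, $(\dagger)$ says $d\bigl(B_{\theta}\circ\varphi-B_{\Phi^{-1}(\theta)}\bigr)\equiv 0$, so $B_{\theta}(\varphi(x))=B_{\Phi^{-1}(\theta)}(x)+c(\theta)$ for a function $c$ on $\partial X$. On the other hand, $\varphi$ being an isometry, the Busemann cocycle formula \eqref{cocycle} gives $B_{\theta}(\varphi(x))=B_{\widehat{\varphi}^{-1}(\theta)}(x)+B_{\theta}(\varphi(x_0))$. Subtracting, $B_{\widehat{\varphi}^{-1}(\theta)}-B_{\Phi^{-1}(\theta)}$ is constant on $X$, so by property (ix) the geodesic rays representing $\widehat{\varphi}^{-1}(\theta)$ and $\Phi^{-1}(\theta)$ are asymptotic, i.e. $\widehat{\varphi}^{-1}(\theta)=\Phi^{-1}(\theta)$ for every $\theta$; hence $\widehat{\varphi}=\Phi$.

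The main obstacle is the first step, the passage to $(\dagger)$: one must compose correctly three distinct identifications — tangent vectors of $\mathcal{P}^+(\partial X)$ as zero-mass signed measures, the push-forward action $(d\Phi_{\sharp})_{\mu}(\tau)=\Phi_{\sharp}(\tau)$ on such vectors, and the formula for $\nu_x^{\mu}$ — and then upgrade an almost-everywhere equality of densities to a pointwise one via absolute continuity with everywhere-positive density. Once $(\dagger)$ is available, the rest is formal. (The standing hypotheses that $X$ carries the Busemann--Poisson kernel and satisfies Hypothesis \ref{hypo} enter only to ensure that $\mathrm{bar}$, and hence $\Sigma$ and $\nu_x^{\mu}$, are well defined.)
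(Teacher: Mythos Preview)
Your argument is correct. The paper itself does not give a proof of this theorem; it only states that ``this theorem is shown by using geometric properties of the normalized Busemann function'' and refers to \cite{IS6} for details. Your proposal does precisely that: the derivation of $(\dagger)$ from the two commuting diagrams via the density identification is clean, the use of properties (v) and (vi) to conclude that $(d\varphi_x)^{\mathrm T}$ preserves the unit sphere (hence is a linear isometry) is exactly the right mechanism, and the final identification $\widehat{\varphi}=\Phi$ via the cocycle formula \eqref{cocycle} together with property (ix) is the natural conclusion. One minor simplification in your last step: since both $B_{\widehat{\varphi}^{-1}(\theta)}$ and $B_{\Phi^{-1}(\theta)}$ are normalized (vanish at $x_0$), the constant difference is in fact zero, so you get equality of the two Busemann functions directly and can read off $\widehat{\varphi}^{-1}(\theta)=\Phi^{-1}(\theta)$ from the gradient at $x_0$ without invoking (ix).
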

This theorem is shown by using geometric properties of the normalized Busemann function.
Refer to \cite{IS6} for details.
Theorem \ref{Poissonmap} is also obtained as a corollary of this theorem.

\section*{Acknowledgments}

The authors would like to appreciate Professor Naoyuki Koike since he offered an opportunity for writing a manuscript.
The authors also wish to thank the referees for their careful refereeing and many useful and valuable comments.


\end{document}